\renewcommand{\arraystretch}{1}
\newtheorem{theorem}{Theorem}[section]
\newtheorem{lemma}{Lemma}[section]
\newenvironment{proof}[1][Proof]{\par\noindent\textbf{#1.} }{\hfill~\rule{0.5em}{0.5em}}
\newcommand{\E}{\mathbb{E}}
\renewcommand{\P}{\mathbb{P}}
\def\l{\ell}
\numberwithin{equation}{section}
\def\var{\mathop{\rm Var}}
\begin{document}
\title{Sparse Long Blocks and the Micro-Structure of the Longest Common Subsequences}
\author{S.~Amsalu\thanks{Department of Information Science, 
Faculty of Informatics,
University of Addis Ababa, Ethiopia}
 \and C.~Houdr\'e
\thanks{\ \ \ \ School of Mathematics, Georgia
 Institute of Technology, Atlanta, GA 30332, USA, houdre@math.gatech.edu. 
Research supported in part by the 
grant \#246283 from the Simons Foundation.  Many thanks to the LPMA of the Universit\'e Pierre et Marie 
Curie, where part of this research was carried out, for its hospitality while supported by a Simons Foundation Fellowship grant \#267336. } 
\and H.~Matzinger\thanks{\ \ \ \ School of Mathematics, Georgia
 Institute of Technology, Atlanta, GA 30332, USA, matzi@math.gatech.edu. Research Supported in part by
NSA Grant H98230-09-1-0017.} }
\maketitle

\begin{abstract} 
Consider two random strings having the same length and
generated by an iid sequence taking its values uniformly 
in a fixed finite alphabet.  Artificially place a long constant block 
into one of the strings, where a constant block is a contiguous 
substring consisting only 
of one type of symbol. The long block replaces a segment 
of equal size and  its length 
is smaller than the length of the strings, but larger than its square-root.  
We show that for sufficiently long strings 
the optimal alignment corresponding to a Longest Common Subsequence 
(LCS) treats the inserted block very differently 
depending on the size of the alphabet. For two-letter alphabets, 
the long constant block gets mainly aligned with the same symbol
from the other string, while for three or more letters the opposite is
true and the block gets mainly aligned with gaps.

We further provide simulation results on the proportion of gaps
in blocks of various lengths.  In our simulations, the blocks are
``regular blocks" in an iid sequence, and are not artificially 
inserted.  Nonetheless, we observe for these natural blocks 
a phenomenon similar to the one shown in case of 
artificially-inserted blocks: with two letters,
the long blocks get aligned with a smaller proportion of gaps; 
for three or more letters, the opposite is true.  

It thus appears that the 
microscopic nature of two-letter optimal alignments and three-letter
optimal alignments are entirely different from each other.
\end{abstract}

\section{Introduction}
Let $x$ and $y$ be two finite strings.  
A common subsequence of $x$ and $y$ is a subsequence which is 
a subsequence of $x$ and at the same time a subsequence of $y$, while a 
Longest Common Subsequence (LCS) of $x$ and $y$ 
is a common subsequence of maximal length.  

A LCS is often used as a measure of strings relatedness, 
and can be viewed as an alignment  
aligning same letter pairs.  Every such 
alignment defines a common subsequence, and 
the length of the subsequence corresponding 
to an alignment, i.e., the number of aligned letter-pairs, is called 
the {\it score of the alignment}.  
The alignment representing a LCS is said to be {\it optimal} or called 
an {\it optimal alignment}.

Longest Common Subsequences (LCS) and Optimal Alignments (OA) 
are important tools used in Computational Biology and Computational Linguistics 
for string matching \cite{Capocelli1}, \cite{watermanintrocompbio}, 
\cite{Watermangeneralintro}, and, in particular, 
for the automatic recognition of related DNA pieces. 

The asymptotic behavior of the expectation and of the variance of the length 
of the LCSs of two independent random strings 
has been studied, among others, by probabilists, physicists, 
computer scientists and computational biologists. 
The LCS problem can be formulated 
as a last passage percolation problem with 
dependent weights; and finding the order of the fluctuations in such 
percolation problems has been open for quite a while.

Throughout, 
$LC_n:=|LCS(X_1X_2\ldots X_n;Y_1Y_2\ldots Y_n)|$ is the length of the LCSs of 
two random strings where $\{X_n\}_{n\ge 1}$ and $\{Y_n\}_{n\ge 1}$ are 
two independent iid sequences uniformly distributed on 
an fixed alphabet of size $k$.   Clearly, $LC_n$ is super-additive 
and via the sub-additive ergodic theorem, 
Chv\'atal and Sankoff \cite{Sankoff1} showed (for stationary sequences) 
that 
$$\gamma^*_k:=\lim_{n\rightarrow\infty}\frac{\E\, LC_n}{n}.$$  
However, even for the simplest distributions such as for binary 
equiprobable alphabet, the exact value 
of $\gamma^*_k$  is unknown.  Nevertheless, extensive simulations have led to 
very good approximate values for these constants, e.g., in the iid case, 
\begin{equation}
\label{table}\begin{array}{cc|c|c|c|c}
k       & &2   &3&4&\cdots \\\hline
\gamma^*_k& &0.812 &0.717 &0.654 &\cdots 
\end{array}
\end{equation}
where the precision in the above table is around $\pm 0.01$ (see \cite{boutet}).  
Exact lower and upper bounds have also been obtained,  
an overview of those as well as new bounds are available in \cite{lueker}.   

Alexander~\cite{Alexander1} further established the 
speed of convergence of $\E\, LC_n/n$ to $\gamma^*_k$, for iid sequences, showing that  
\begin{equation}\label{alex}
\gamma^*_k n-C_L\sqrt{n\log n}\le \E\, LC_n\le\gamma^*_k n,
\end{equation}
where $C_L>0$ is a constant depending neither on 
$n$ nor on the distribution of the strings.  
 
Below, we also need to consider two sequences of different lengths 
but such that the two lengths are in a fixed proportion to each 
other.  To do so, for $p\in(-1,1)$, let  
\begin{equation}
\label{gammaknp}\gamma_k(n,p):=
\frac{\E|LCS(X_1X_2\ldots X_{n-np};Y_1Y_2\ldots Y_{n+np})|}{n},
\end{equation}
where above, when real, the indices are understood to be roundings to the nearest 
positive integers, and let 
\begin{equation}
\label{meancurve}
\gamma_k(p):=
\lim_{n\rightarrow\infty}\gamma_k(n,p),
\end{equation}
which is again finite by standard super-additivity arguments.  
The function $\gamma_k: p\mapsto \gamma_k(p)$ is called the {\it mean LCS-function}; 
it is clearly bounded, non-negative, symmetric around $p=0$, and, as shown next, 
concave; therefore it has a maximum at $p=0$.   To prove the concavity 
property of $\gamma_k$, first by super-additivity,   
\begin{align*} n\gamma_k\left(n, \frac{p+q}{2}\right) & = 
\E|LCS(X_1\ldots X_{n(1-(p+q)/2)};Y_1\ldots Y_{n(1+(p+q)/2)})| \\ 
&\ge  \E|LCS(X_1\ldots X_{n(1-p)/2};Y_1\ldots Y_{n(1+p)/2})| \\ 
&\quad \quad +   \E|LCS(X_1\ldots X_{n(1-q)/2};Y_1\ldots Y_{n(1+q)/2})|\\ 
&= \frac{n}{2}\gamma_k\left(\frac{n}{2},p\right)+ \frac{n}{2}\gamma_k\left(\frac{n}{2},q\right).  
\end{align*}
Therefore,   
$$\gamma_k\left(\frac{p+q}{2}\right)\ge \frac{1}{2}\gamma_k\left(\frac{n}{2},p\right)+
\frac{1}{2}\gamma_k\left(\frac{n}{2},q\right),$$  
which by taking limits, as $n\rightarrow \infty$, leads  
$$\gamma_k\left(\frac{p+q}{2}\right)\geq \frac{\gamma_k(p)+\gamma_k(q)}{2}.$$

The function $\gamma_k$ corresponds to the 
wet-region-shape in first passage percolation. From  
our simulations it seems quite clear that $\gamma_k$ is strictly 
concave in a neighborhood of $p=0$, but this might be highly non trivial to prove.  
As a matter of fact, in first passage  percolation, the 
corresponding problem, of showing the strict convexity of the asymptotic wet-region 
shape remains open.  

The main results of the present paper (Theorem~\ref{maintheoremsubsection} and 
Theorem~\ref{maintheoremsubsection2}) are concerned with sequences 
of length $n=2d$.  They describe 
the effect of replacing an iid piece, of length $d^\beta$, $1/2<\beta<1$, 
with a long constant block of equal length.  It is shown that typically 
replacing an iid part by a long constant block 
leads to a decrease in the LCS.  It is also shown that in the binary case, 
the long constant block gets mainly aligned with letters while with three 
or more letters the opposite 
is true.  To illustrate our results, 
consider the sequences
$$01{\bf000 00}001$$
and 
$$00101 11010,$$
where the bold faced letters are those of the replacing block. 
Theorem~\ref{maintheoremsubsection} and Theorem~\ref{maintheoremsubsection2} 
respectively assert that the optimal alignments behave very differently 
depending upon the size of the alphabet:  
In the binary case the long constant block gets mainly aligned 
with bits, while with three or more equiprobable letters it gets mainly aligned 
with gaps.  This phenomenon holds with high probability and 
assuming $d$ to be sufficiently large.  
In the above example, a (non unique) LCS is given by $00000$ and it  
corresponds to the (non-unique) optimal alignment 
$$
\begin{array}{c|c|c|c|c| c|c|c|c|c| c|c|c|c|c }
0&1&{\bf0}&{\bf0}& &{\bf0}& &  & &{\bf 0}& &{\bf0}&0&0&1\\\hline
 & &0     &0     &1&0     &1& 1&1&0      &1&0     & & &
\end{array}
$$
In the above example of optimal alignment all 
the zeros from the long constant block (in bold face) got aligned with 
zeros and not with gaps.  Our results show that for binary 
sequences, the artificially inserted 
long constant block gets aligned with very few gaps; more precisely, 
the number of gaps has an order of magnitude smaller than 
the length of the long block.  With three or more letters the opposite is true and 
the long constant block gets aligned almost exclusively
with gaps.  The situation for three or more letters is 
not surprising, but the binary one is rather counter-intuitive.  
Although our proof is for $d$ going to infinity, 
this phenomenon is observed in simulations for regular 
blocks which have not been artificially inserted: 
with binary sequences longer blocks tend to be 
aligned with a small proportion of gaps, while with more letter the opposite is true.
(More examples of this type are given at the beginning of 
Section~\ref{statements}.)   We thus seem to have uncovered an interesting 
phenomenon, in that the microstructure of the optimal alignment of iid 
sequences for binary sequences is fundamentally different 
from the case with more letters. 
It is another instance (see \cite{HoLi}) where the size of the alphabet in 
a subsequence problem plays an important role.  

Finally, let us described some differentiability conditions  
on $\gamma_k$ which could be used to  
obtain our results.  First, since it is concave, $\gamma_k$ 
has non-increasing left and a right derivatives at any $p\in (-1,1)$, 
with $\gamma_k^\prime(p^-) \ge \gamma_k^\prime(p^+)$, while by symmetry, 
$\gamma_k^\prime(p^\pm)=-\gamma_k^\prime((-p)^\mp)$.   

Next, let $0\le p_M < 1$ be 
the largest real for which  
$\gamma_k$ is maximal. Hence, $[-p_M,p_M]$  
is the largest interval on which $\gamma_k$ is everywhere 
equal to its maximal value $\gamma_k(0)$, i.e.,  
$[-p_M,p_M]=\gamma^{-1}(\{\gamma_k(0)\})$.

Our theorems will be verified under any one of the following four conditions:

\begin{enumerate}
\item{} The mean LCS-function $\gamma_k$ is strictly concave in a neighborhood of the origin and is 
differentiable at $0$ (and so $p_M=0$ and $\gamma_k^\prime(0) = 0$).  
\item{}The function $\gamma_k$ is differentiable at $p_M$, i.e., $\gamma_k^\prime(p_M^+) = 
\gamma_k^\prime(p_M^-)$ 
and therefore (either by symmetry or since $\gamma_k^\prime(p_M^-) = 0$ 
if $p_M>0$ ) $\gamma_k^\prime(p_M) = 0$.  
\item{} The absolute value of $\gamma_k^\prime(p_M^+)\le 0$ is dominated 
by the absolute value of $\gamma_k(0)-(2/k)$:    
$$\left|\frac{\gamma_k^\prime(p_M^+)}{2}\right|<\left|\frac{\gamma_k(0)}{2}-\frac{1}{k}\right|.
$$
\item{} The function $\gamma_k$ is strictly concave in a neighborhood of the origin 
and its right derivative at the origin is such that:   
$$\left|\frac{\gamma_k^\prime(0^+)}{2}\right|<\left|\frac{\gamma_k(0)}{2}-\frac{1}{k}\right|.
$$

\end{enumerate}

Clearly, $1 \implies 2\implies 3, 1\implies 4$.  
In the present article, the main results are proved under 
the assumptions of Condition 2.  But, in the summary of the proofs (Section 2) it is 
indicated how Condition 3 or 4 would also work.  
With Condition 3, the notations for the proofs would become even more cumbersome 
since an additional term would appear everywhere.    
From our simulations, we have no doubt that $p_M=0$ and that even Condition 1 holds true. 
Note also that Condition 3, unlike the others, can be verified up 
to a certain confidence level by Monte Carlo  
simulations, making it rather nice and important.  

As for the content of the paper,  
Section~\ref{mainideas} presents 
some of the main ideas behind the proofs, while 
statements of the main results are given in Section~\ref{statements}.   
Section~\ref{simulations} presents many simulations and discusses the nature of two-letter 
and three-letter optimal alignments.  
The proofs of the main results are presented 
in Section~\ref{proofs} (Subsection~\ref{rigorous3} for three or more letters 
and Subsection~\ref{rigorous2} for binary alphabets.)  
In addition to its own interest, the present paper serves as background 
to showing that the variance of the LCS of two iid random strings with many 
added long blocks is linear in the length of the strings (see \cite{amhoma}).

\section{Main Ideas}\label{mainideas}
This section outlines the main ideas behind the proofs of the results.  
Below, both strings $X$ and $Y$ have length $2d$ and approximately 
in its middle, the sequence $X$ contains a long constant block of approximate length 
$\l=d^\beta$ (Actually, in many of the proofs we exactly take $\l=d^\beta$, $\l$ even, 
but it is clear that choosing a multiple of $d^\beta$ or even multiplying 
$d^\beta$ by a logarithmic factor of $d$ would work).  Since we also believe that the 
phenomena we uncovered are cogent for naturally occurring long blocks we often 
interchange the symbols $\l$ and $d^\beta$.   
The two sequences are independent and except for the long constant 
block in $X$, iid uniform.   Besides combinatorial and concentration inequalities, 
the proofs results follow from the following two facts (the first of which also follows from 
Hoeffding's martingale inequality):
\begin{enumerate}
\item{} First, 
$\gamma_k(n,p)$ converges, uniformly in $p$ to $\gamma_k(p)$, at a rate of $\sqrt{{\ln n}/{n}}$.  
More precisely, Alexander (see Example 1.4 and Theorem~4.2 in \cite{Alexander2}), 
shows that there exists a constant $C_\gamma>0$ independent of 
$n$ and $p\in(-1,1)$ such that
\begin{equation}
\label{alexander}
|\gamma_k(n,p)-\gamma_k(p)|\leq C_\gamma\sqrt{\frac{\ln n}{n}},
\end{equation}
for all $n$ and all $p\in(-1,1)$. 

\item{} Second, when a string with only one symbol gets aligned with  
another iid string with equiprobable letters, a LCS is typically much  
shorter than for two iid strings with equiprobable letters.\end{enumerate}

Let us illustrate this second point on an example. 
Let $v=000000$, $w=100101$, so $LCS(v,w)=000$ and $|LCS(v,w)|=3$ which is 
the number of zeros in the string $w$.  Now, if $w$ is an iid string with $k$ 
equiprobable letters and if $v$ consists only of zeros, both strings having the same length, 
then typically the LCS has length approximately equal to $|w|/k$.  
This is typically much less than for two iid sequences with equiprobable letters, where 
the LCS length is approximated by $\gamma^*_k |w|$.   
One then compares $1/k$ and $\gamma^*_k$ and see, that $1/k$ is smaller 
than $\gamma^*_k$ for $k\geq 2$.  

In the present article, we prove two fundamental properties of the 
optimal alignment with an inserted long constant block:
\begin{enumerate}
\item{}  
First, replacing an iid part in one of the sequences  
by a long constant block causes an expected loss of the LCS.  In fact, the expected effect 
of replacing an iid piece with a long constant block of equal length is linear in the length 
of block (as shown in Section~\ref{proofs}).  The variance cannot make up for this loss since, 
by Hoeffding's inequality, the standard deviation is at most of order $\sqrt{d}$ 
but $d^\beta$, $1/2 < \beta < 1$, has an order of magnitude greater that $\sqrt{d}$.    
\item{}   Second, and still in the above setting, depending on whether 
${\gamma^*_k}/{2}-{1}/{k}$ is positive or not, the long block gets mainly aligned with gaps or not.  
\end{enumerate}

Parts I and II which follow, outline the proof 
estimating the number of gaps aligned with the long block.  

To start with, the string $X$ is made up of 
three concatenated strings $X^a$, $B$ and $X^c$, 
where $X^a$ and $X^c$ are iid strings and $B$ is
a constant long block.  This is written as:  
$$X=X^a B X^c,$$
where $X^a$ and $X^c$ have common length equal to: $d-d^\beta/2=d+o(d)$.  
Next, let $\pi$ be an optimal alignment of $X$ with the iid string 
$Y=Y_1Y_2\ldots Y_{2d}$, and let $Y^a$, $Y^b$, 
and $Y^c$ denote pieces of $Y$ respectively aligned with 
$X^a$, $B$, and $X^c$.

Next, modify the alignment $\pi$ to obtain a new alignment 
$\bar{\pi}$.  For this, align $X^a$ with $Y^aY^b$ instead of only with 
$Y^a$. The block $B$ gets aligned exclusively with gaps under $\bar{\pi}$, 
while the alignment of $X^c$ and $Y^c$ remains unchanged.  
Request also that $\bar{\pi}$ aligns $X^a$ and $Y^aY^b$ 
in an optimal way, so that the part of the alignment 
score of $\bar{\pi}$ coming from aligning $X^a$ with $Y^aY^b$ 
is equal to $|LCS(X^a,Y^aY^b)|$.  The alignments $\pi$ and $\bar\pi$ 
are schematically represented via:  
$$\pi:\;\;\;\;\begin{array}{c|c|c}
X^a &B &X^c\\\hline
Y^a &Y^b & Y^c
\end{array}
$$ 
and
$$\bar{\pi}:\;\;\;\;\begin{array}{c|c|c}
X^a &B &X^c\\\hline
Y^aY^b & & Y^c
\end{array}
$$ 
As for the scores, they are given by: 
\begin{align*} 
&{\tt score \; of\;}\pi=|LCS(X^a,Y^a)|+|LCS(B,Y^b)|+|LCS(X^c,Y^c)|,\\
&{\tt score \; of\;}\bar{\pi}=|LCS(X^a,Y^aY^c)|+|LCS(X^c,Y^c)|.  
\end{align*}
The difference between the two alignment scores has two sources: 
first the loss of those letters of the block $B$ which where aligned 
with letters, and not with gaps, 
under $\pi$ (while under $\bar{\pi}$, all the letters get aligned with gaps).  
If $h$ denotes the length of  $Y^b$, then this expected loss is typically 
$h/k$. ($B$ is only made up of letters of one type, 
while in the iid part, each letter has probability $1/k$, 
and therefore a given letter is expected to appear $h/k$ times 
in the substring $Y^b$).  The second source of change in score between 
$\pi$ and $\bar{\pi}$ comes from ``adding $Y^c$ to the alignment 
of $X^a$ and $Y^a$''.  The amount gained is then 
\begin{equation}
\label{thisdiff}
|LCS(X^a,Y^aY^b)|-|LCS(X^a,Y^a)|.
\end{equation}
Assuming, say, that Condition 2 holds and from the optimality of $\pi$, 
it is easy to see that $Y^a$ and $Y^c$ have length 
$d+o(d)$, for $d$ large.   Assume next that 
$Y^b$ has length $h=cd^\beta$, where $c>0$ is a constant not depending 
on $d$. The increase given in \eqref{thisdiff}  
can be described as the increase in LCS, when adding $h=cd^\beta$ iid letters to 
two iid strings of length $d+o(d)$.  Part I below analyzes the size of this increase and is then used 
in Part II to explain how to estimate 
the proportion of gaps the long block gets aligned with.

\paragraph{I) Effect of adding $h=cd^\beta$ symbols to one sequence only.  }
Let $V$ and $W$ be two independent iid strings of length 
$d+o(d)$ with $k$ equiprobable letters.  Let $d_1$ and $d_2$ be the respective 
length of $V$ and $W$, and let 
\begin{align*}
\bar{d}&=\frac{d_1+d_2}{2}\\
\underline d&=\frac{d_2-d_1}{d_1+d_2}.  
\end{align*}
Clearly, 
$$\lim_{d\rightarrow\infty} \underline{d}=0.$$  
Now, increase the length of $W$ by appending $h=cd^\beta$ iid equiprobable 
symbols (from the same alphabet as that of $V$ and $W$) to it.   
Let $\Delta$ denote the size of the increase 
in the LCS score due to appending these $cd^\beta$ letters, i.e.,  
$$\Delta:=|LCS(V;W_1W_2\ldots W_d\ldots W_{d_2+cd^\beta-1}W_{d_2+cd^\beta})|-|
LCS(V;W)|.$$
First, for $d$ large,   
$\E|LCS(V,W)|$, is approximately equal to $\gamma_k(0)d$.  
Second, and as explain next, the expected gain $\E\Delta$ is approximately  
${cd^\beta}(\gamma_k(0) + \xi(d)\gamma_k^\prime(0^+))/2$, 
where $\xi(d)$ can take any value between $-1$ and $1$.  Indeed, by the very 
definition of $\gamma_k(\cdot,\cdot)$, 
$$\E\Delta=
\gamma_k\left(\bar{d}+\frac{cd^\beta}2,\frac{cd^\beta/2}{\bar{d}+
(cd^\beta/2)}+\frac{d_2-d_1}{d_1+d_2+cd^\beta}\right)
\left(\bar{d}+\frac{cd^\beta}2\right) - \gamma_k(\bar{d},\underline{d})\bar{d},$$
which, with the help of \eqref{alexander} and since, 
$$\frac{d_2-d_1}{d_1+d_2+cd^\beta}-\underline{d}=-\frac{\underline{d}cd^\beta/2}{\bar{d}+(cd^\beta/2)},$$
becomes 
$$\E\Delta=\gamma_k\left(\frac{cd^\beta/2}{\bar{d}+(cd^\beta/2)}+
\frac{d_2-d_1}{d_1+d_2+cd^\beta}\right) 
\left(\bar{d}+\frac{cd^\beta}2\right) - \gamma_k(\underline{d})\bar{d}+
O\left(\sqrt{d\ln d}\,\right),$$
i.e., 
\begin{equation}
\E\Delta=
\left(\gamma_k\!\left(\frac{(1-\underline{d})cd^\beta/2}{\bar{d}+(cd^\beta/2)}+\underline{d}\right)
-\gamma_k(\underline{d})\right)\!\!\left(\bar{d}+\frac{cd^\beta}{2}\right) + \gamma_k(\underline{d})\!\left(\bar{d}+\frac{cd^{\beta}}2-\bar{d}\right) 
+O\left(\sqrt{d\ln d}\right),\nonumber
\end{equation}
i.e., 
\begin{equation}
\label{alexander2}
\E\Delta=\gamma_k(\underline{d})\frac{cd^{\beta}}2+
\frac{\gamma_k\left(\frac{(1-\underline{d})cd^\beta/2}{\bar{d}+(cd^\beta/2)}+\underline{d}\right)
-\gamma_k(\underline{d})}{\frac{(1-\underline{d})cd^\beta/2}{\bar{d}+(cd^\beta/2)}}\,\frac{(1-\underline{d})cd^\beta}{2}
+O\left(\sqrt{d\ln d}\right), 
\end{equation}
above the idea can be informally 
summarized as:  $\delta(d\gamma_k)\approx d\delta(\gamma_k) + \gamma_k\delta(d)$.
Now, $d^\beta/(\bar{d}+(cd^\beta/2)) \to 0$, as $d\rightarrow\infty$ and $\gamma_k$ is concave, therefore  
$$\gamma_k^\prime(0^-)\!\geq\limsup_{d\to+\infty}
\frac{\gamma_k\!\left(\frac{(1-\underline{d})cd^\beta/2}{\bar{d}+(cd^\beta/2)}+\underline{d}\right)-\gamma_k(\underline{d})}
{\frac{(1-\underline{d})cd^\beta/2}{\bar{d}+(cd^\beta/2)}}
\!\geq\!\liminf_{d\to+\infty}
\frac{\gamma_k\!\left(\frac{(1-\underline{d})cd^\beta/2}{\bar{d}+(cd^\beta/2)}+\underline{d}\right)-\gamma_k(\underline{d})}
{\frac{(1-\underline{d})cd^\beta/2}{\bar{d}+(cd^\beta/2)}}\!\geq\!\gamma_k^\prime(0^+).$$
Hence, 
\begin{equation}
\label{samuel}
\gamma_k^\prime(0^-)\frac{cd^\beta}{2} +
o(d^\beta)\geq
\frac{\gamma_k\left(\frac{(1-\underline{d})cd^\beta/2}{\bar{d}+(cd^\beta/2)}+\underline{d}\right)-\gamma_k(\underline{d})}
{\frac{(1-\underline{d})cd^\beta/2}{\bar{d}+(cd^\beta/2)}}\, \frac{cd^\beta}2\geq 
\gamma_k^\prime(0^+)\frac{cd^\beta}{2}+
o(d^\beta).
\end{equation}
Now, using \eqref{samuel} with \eqref{alexander2} yields the desired order 
of magnitude for the expected gain:  
\begin{equation}
\label{orderexpectDelta2}
(\gamma_k(0)+\gamma_k^\prime(0^-))\frac{cd^\beta}{2}+o(d^\beta)\geq
\E\Delta\geq (\gamma_k(0)+\gamma_k^\prime(0^+))\frac{cd^\beta}{2}+o(d^\beta).
\end{equation}
In particular, when $\gamma_k^\prime(0^+)=\gamma_k^\prime(0^-)=0$, the above inequality becomes
\begin{equation}
\label{orderexpectDelta}
\E\Delta= \gamma_k(0)\frac{cd^\beta}{2}+o(d^\beta).
\end{equation}

As shown next, the order of magnitude of $\Delta$ 
is, with high probability, the same as the order of its expectation.  Indeed, 
in our context, the random variable $\Delta$ is a function 
of the iid entries $V_1V_2\ldots V_d$ and $W_1W_2\ldots W_{d+cd^\beta}$.   
Changing only one of its entries, changes $\Delta$ by at most $2$ and so 
by Hoeffding's martingale inequality and setting $u=2d+cd^\beta$,  
\begin{equation}
\label{inequ}
\P(|\Delta-\E\Delta|\geq t)\leq
2\exp\left( -\frac{2t^2}{u}\right),
\end{equation}
for all $t>0$.  Moreover, integrating out \eqref{inequ} gives  
$$\var\Delta\le u=2d+cd^\beta.$$

\paragraph{II) On the proportion of gaps aligned with the long constant block. }  
The previous arguments can now be used to understand 
the gaps aligned with the long block by an optimal alignment.  
Unlike in Part I), we consider here string $X$ and $Y$  
of length $2d$, which are iid 
except for $X$ containing  a long constant block in its middle.  
Again, $\pi$ is an optimal alignment of $X=X^aBX^c$ and $Y=Y^aY^bY^c$  
aligning $X^a$ with $Y^a$, $B$ with $Y^b$ 
and $X^c$ with $Y^c$.   Then, $\pi$ is modified to get 
$\bar{\pi}$ which aligns all 
of $B$ with gaps and $X^a$ with $Y^aY^b$.  
Again, the length of $B$ is $d^\beta$, with $1/2<\beta<1$, while 
the length of $Y^b$ is $h=cd^\beta$.  

Now, a somewhat over-simplified summary of the details 
of Section~\ref{proofs} is presented.  Assuming  $\gamma_k$ satisfies, say, Condition 2,  
it is shown in Section~\ref{proofs}) that with high probability $Y^a$ and $Y^c$  
have approximately the same length as $X^a$ and $X^c$. In other words, 
with high probability, the four strings $X^a$, $X^c$, $Y^a$ and $Y^c$ 
have length  $d+o(d)$.  So, the result of Part I applies to 
$X^a$ and $Y^a$, implying that, with high probability,  
$$|LCS(X^a,Y^aY^b)|-|LCS(X^a,Y^a)|=cd^\beta\frac{\gamma_k(0)}{2}+o(d^\beta).$$ 
Now, in the new alignment 
$\bar{\pi}$, the letters of the long block which 
were not aligned with gaps but with symbols from $Y^b$ are lost, 
and the loss is approximately ${h}/{k}={cd^\beta}/{k}$.  Moreover as mentioned earlier, 
the difference of the scores between $\pi$ and $\bar{\pi}$ is made up 
of the increase due to ``adding $Y^b$ to the alignment 
of $X^a$ and $Y^a$'' minus the loss in letters from the block $B$.  
Assuming $\gamma_k^\prime(0)=0$ and using \eqref{orderexpectDelta}, 
this difference is equal to: 
\begin{align*}&{\tt score\;of\;} \bar{\pi}-{\tt score\;of\;}\pi=\\ 
&(|LCS(X^a,Y^aY^b)|-|LCS(X^a,Y^a)|)\;\;-\;\;|LCS(B,Y^b)|=
c d^\beta\left(\frac{\gamma_k(0)}2 -\frac 1k\right)+o(d^\beta).
\end{align*}
Hence, whenever $\gamma_k(0)/2> {1}/{k}$, the change from $\pi$ to $\bar\pi$ typically increases 
the number of aligned letters and therefore $\pi$ cannot be an optimal alignment. 
In that case, the long constant block cannot be aligned with a piece $Y^b$ 
whose length-order is linear order $d^\beta$.  
In other words, the long constant block 
is, with high probability, mainly aligned with gaps.  

On the other hand, when ${\gamma_k(0)}/{2}<1/k$, then the score of $\pi$ is larger 
than the score of $\bar{\pi}$. 
So, an alignment like $\bar{\pi}$ cannot be optimal in that case.  In other words, 
when ${\gamma_k(0)}/{2}<1/k$ then, with high probability, any optimal alignment 
aligns most letters of the long block $B$ with letters and not with gaps.  
Here ``most letters'' indicates that at most $o(d^\beta)$ letters from the long block could  
get aligned with gaps.   These results are explained next, assuming $\gamma_k$ strictly concave 
in a neighborhood of the origin and having a derivative (equal to zero) at the origin.  
The same arguments, with minor changes, work 
as well without the strict concavity, assuming only that $\gamma_k^\prime(p_M)$ 
exists (and is therefore equal to zero).  
Using, in our developments, \eqref{orderexpectDelta2} rather than 
\eqref{orderexpectDelta}, 
the weaker condition: 
$$\left|\frac{\gamma_k^\prime(p_M^+)}{2}\right|< \left|\frac{\gamma_k(0)}{2}-\frac{1}{k}\right|
$$
will also do.  The difference in score between the alignment $\pi$ and $\bar{\pi}$ 
is then 
\begin{align*}&{\tt score\;of\;} \bar{\pi}-{\tt score\;of\;}\pi=\\
&|LCS(X^a\!,Y^aY^b)|\!-\!|LCS(X^a\!,Y^a)|\!-\!|LCS(B,Y^b)\!|=
\!cd^\beta\!\!\left(\!\!\frac{\gamma_k(0)}2\! +\xi(d)\frac{\gamma_k^\prime(p_M^+)}{2} -\frac 1k\!\right)\!\!+\!o(d^\beta), 
\end{align*}
where $\xi(d)$ can take any value between $-1$ and $1$.  
\paragraph{III) For which $k$ do we have  $k\gamma_k(0)> 2$?}
Whether a long constant block 
gets mainly aligned with gaps or not depends on $\gamma_k(0)$ being smaller or larger than $1/k$. 
It turns out, that $\gamma_k(0)$ is smaller than $2/k$ only for binary strings, 
that is when $k=2$.  For every $k\geq 3$, the opposite is true.  
Despite the exact values of $\gamma_k(0)$ not being known, 
there are rigorous bounds available, precise enough to show our assertions.  Anyhow, 
for large $k$, Kiwi, Loebl and Matou{\v{s}}ek \cite{kiwi} have shown that 
$\gamma_k(0)$ is of linear in $1/\sqrt{k}$, 
making $\gamma_k(0)/2$ strictly larger than $1/k$ 
when $k$ is large enough.  The case $k=3$ is near critical as can also be seen in our simulations.  
Taking the value of $0.717$ in Table~\eqref{table}, 
then $ \gamma_3(0)/2=0.3585$, which is slightly larger than 
$1/3$, specially since the order of magnitude of the precision by which 
the values $\gamma_k(0)$ are known is around $0.01$.

\section{Statements of Results}\label{statements}
In this section, we precisely state results indicating that 
the differentiability of the function $\gamma_k$ at $p_M$ 
controls the proportion of symbols from the long constant block which get 
aligned with gaps.  A kind of zero-one law holds true depending on the size of the alphabet.     

Below, both sequences $X$ and $Y$ 
have length $2d$, while the long block has length approximately equal to  
$d^\beta$, with $1/2 <\beta<1$, with $d$ large enough.  

We start with an example explaining how the aligned gaps are counted.  
For this, let $x:=00011100$ and let $y:=00011001$.  
The  first block of $x$ consists of three zeros, 
its second block consists of three ones, the third block consists of two zeros 
and the LCS of $x$ and $y$ is
$$LCS(x;y)=0001100,$$ 
which corresponds to the alignment 
$$
\begin{array}{l|l}
x&000111 00\\\hline
y&00011\;\;001\\\hline
LCS&00011\;\;00 
\end{array}
$$ 
In this alignment, the first  block of $x$ is only aligned with symbols, 
the second is aligned with one 
gap and so $1/3$ of its symbols gets aligned with gaps, 
while the last block of $x$ is only aligned with symbols and so 
the proportion of its symbols aligned with gaps is zero.

Let us next present two more examples to illustrate how with two 
letters, long constant blocks tend to be aligned with a proportion of gaps 
close to zero, while with three and more letters the opposite is true:

For this consider first the two binary strings:
$x=100101111{\bf 00000}101101101$ and $y=01111001011011011101001$.  
The alignment corresponding to the LCS is 
$$
\begin{array}{l|l}
x& 100101111{\bf 00\;\;0\;\;\;0\;\;\;\;0}10110110\;\;1\\\hline
y& \;\;0\;\;1\;\;111\;\;00101101101\;\;1101\;\;001
\end{array}
$$
Above, every $0$ 
from the long block is aligned with a $0$.  Let us next consider an example with 
six letters, and let $x=65 324 214 444 41  235 6631$ 
and $y=55  425 153 112 422 255 656$.  
The strings $x$ and $y$ in the previous example 
are ``generated" in the following way: 
Roll a fair six-sided die independently to obtain the strings 
everywhere except in the location of the long block.  
For the long block, i.e., for the  piece 
$x_{8}x_9x_{10}x_{11}x_{12}$, decide in advance 
to artificially introduce a long constant block: 
$x_8=x_9=\cdots=x_{12}$.  Outside that piece, roll the six-sided 
die independently, hence, 
for $x_1x_2x_3x_4x_5x_6x_7$, 
the die is rolled independently seven times, eight times for 
$x_{13}x_{14}x_{15}x_{16}x_{17}x_{18}x_{19} x_{20}$ and similarly fifteen times 
for the whole string $y$.  The alignment corresponding to the LCS $542112566$ is 
$$
\begin{array}{l|r}
x& \;\;   6   5\;\;32   42\;\;   1\;\;\;\;\;\;{\bf 44444}123\;\;\;\;\;\;\;\;\;    
56\;\;63 1\\ \hline
y&\;\;55\;\;\;\;425      1531{\bf \;\;\;\;\;\;\;\;\;\;\;}12\;\;422255656
\;\;\;\;
\end{array}
$$
The long block of five fours in $x$ is solely aligned with gaps.  

At this stage, formally describe the model with one inserted long constant block and sequences 
of length $2d$.  Let $X=X_1X_2\ldots X_{2d}$ and 
$Y=Y_1Y_2\ldots Y_{2d}$ be two independent strings of length 
$2d$.  A long constant block of length $\l$ is artificially inserted in 
the middle of the string $X$, replacing an iid part of equal length.  
Thus (assuming $\l$ even), 
$$\P\left(X_{d-(\l/2)+1}=X_{d-(\l/2)+2}=\ldots=X_{d+(\l/2)-1}=X_{d+(\l/2)}\right)=1.$$
while the rest the strings are iid with $k$ equiprobable letters. 
(Hence, $Y$, $X_1X_2\ldots X_{d-(\l/2)}$ and 
$X_{d+(\l/2)+1}X_{d+(\l/2)+2}\ldots X_{2d-1}X_{2d}$ are three 
independent iid strings with $\P(X_i=j)=\P(Y_i=j)=(1/k)$, $j=1,2,\ldots,k$, $i=1,2,\ldots,2d$.)

Next, let $\beta$ and $\alpha$ be constants 
independent of $d$ and such that 
\begin{equation} 
\label{beta1beta}
\frac12<\alpha<\beta<1,
\end{equation}
and let the length of the long constant block be $\l=d^\beta$.   
To formulate our first main result we further 
need two definitions:

{\bf Let $E^d$ be the event that the long constant block is mainly aligned 
with gaps}.  More precisely, $E^d$ is the event that at most $d^{\alpha}-1$ 
symbols of the long block get aligned with letters in any LCS-alignment.  In other words,   
the  score does not decrease by more than $d^{\alpha}-1$, when cutting 
out the long block:
$$
E^d:=\left\{|LCS(X_1X_2\ldots X_{d-(\l/2)-1}X_{d-(\l/2)}X_{d+(\l/2)+1}\ldots 
X_{2d};Y)| 
+d^{\alpha}> LCS(X;Y)\right\}.$$

{\bf Let $K^d$ be the event that replacing the long constant block with iid 
symbols approximately increases the LCS length by $\gamma^*_k/2$ times 
the length of the long constant block.}   Formally, 
let $\gamma_k^a$ be any constant, independent of $d$, and strictly smaller than 
$\gamma^*_k$, then $K^d$ is the event 
that when replacing the long constant block with iid symbols the length of the LCS 
increases by at least $(\gamma_k^a/2)d^\beta-d^{\alpha}$:
$$K^d:=\left\{|LCS(X^*;Y)| 
-|LCS(X;Y)|\geq \frac{\gamma_k^a}2 d^\beta-d^{\alpha}\right\},$$
where $X^*$ denotes the string obtained from $X$ by replacing
the long constant block by iid symbols.  In other words, for 
$i\in[1,d-(\l/2)]\cup 
[d+(\l/2)+1,2d]$, $X_i^*:=X_i$.  Moreover, the whole string   
$X^*=X^*_1X_2^*\ldots X^*_{2d}$ is iid.

We are now ready to formulate our main result for three or more letters.  

\begin{theorem}
\label{maintheoremsubsection}
Let
$k\gamma^*_k>{2}$,
and let also the mean
LCS function $\gamma_k: 
(-1,1)\rightarrow\mathbb{R}$, be differentiable at $p_M$.  
Let $1/2 < \alpha<\beta <1$.  Then, 
there exist constants $C_E>0, C_K>0$, independent of $d$,  such that 
$$\P(E^d)\geq 1-e^{-C_Ed^{2\alpha-1}},$$
and 
$$\P(K^d)\geq 1-e^{-C_Kd^{2\alpha-1}},$$
for all $d\geq 1$.
\end{theorem}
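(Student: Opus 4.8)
The plan is to derive both displayed bounds from two independent high-probability estimates and then to combine them. Write $X=X^aBX^c$ with $|X^a|=|X^c|=d-d^\beta/2$, let $X^{ac}:=X^aX^c$ be the string obtained by deleting the block, and recall that $X^*=X^aB^*X^c$ replaces the block by an iid block $B^*$. Since $X^{ac}$ is a subsequence of $X$, one has $|LCS(X^{ac},Y)|\le|LCS(X,Y)|$, and $E^d$ is exactly the event $|LCS(X,Y)|-|LCS(X^{ac},Y)|\le d^\alpha-1$. I will prove: (a) with probability at least $1-e^{-C_Ed^{2\alpha-1}}$ one has $|LCS(X,Y)|-|LCS(X^{ac},Y)|\le d^\alpha-1$, which is $E^d$ itself; and (b) with probability at least $1-e^{-cd^{2\beta-1}}$ one has $|LCS(X^*,Y)|-|LCS(X^{ac},Y)|\ge(\gamma_k^a/2)d^\beta$. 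On the intersection of (a) and (b), subtracting gives $|LCS(X^*,Y)|-|LCS(X,Y)|\ge(\gamma_k^a/2)d^\beta-d^\alpha$, which is $K^d$; and since $\beta>\alpha$ the exponent $2\beta-1$ exceeds $2\alpha-1$, so after relabelling constants both $\P(E^d)$ and $\P(K^d)$ are bounded below by $1-e^{-C\,d^{2\alpha-1}}$ for all $d\ge1$ (the finitely many small $d$ being absorbed by shrinking the constants).

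Estimate (b) is the softer of the two and needs only a comparison of expectations followed by concentration. The string $X^*$ is fully iid of length $2d$, so by \eqref{alex} one has $\E|LCS(X^*,Y)|=2d\,\gamma^*_k+O(\sqrt{d\log d})$, while $X^{ac}$ is iid of length $2d-d^\beta$; writing the pair $(X^{ac},Y)$ through \eqref{gammaknp} with $n=2d-d^\beta/2$ and imbalance $p=(d^\beta/2)/n\to0$, invoking \eqref{alexander}, and using $\gamma_k'(0)=0$ (which holds because $\gamma_k$ is flat on $[-p_M,p_M]$ when $p_M>0$ and is differentiable with a maximum at $0$ when $p_M=0$) yields $\E|LCS(X^{ac},Y)|=(2d-d^\beta/2)\gamma^*_k+o(d^\beta)$. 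Subtracting, the expected gain equals $(\gamma^*_k/2)d^\beta+o(d^\beta)$, which exceeds $(\gamma_k^a/2)d^\beta$ by a positive constant multiple of $d^\beta$ once $d$ is large, because $\gamma_k^a<\gamma^*_k$. Finally $|LCS(X^*,Y)|-|LCS(X^{ac},Y)|$ changes by at most $2$ when any one of the $O(d)$ coordinates $X^a,X^c,B^*,Y$ is altered, so Hoeffding's martingale inequality gives deviations on scale $\sqrt d$ and the probability that this difference drops below $(\gamma_k^a/2)d^\beta$ is at most $e^{-c(d^\beta)^2/d}=e^{-cd^{2\beta-1}}$.

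Estimate (a), namely $E^d$, is the heart of the argument, and I would prove it by running the surgery of Section~\ref{mainideas} as a contradiction. Fix an optimal alignment $\pi$ of $X$ with $Y$, and let $Y^a,Y^b,Y^c$ be the pieces of $Y$ aligned with $X^a,B,X^c$; since the restriction of $\pi$ to each block is optimal, the score of $\pi$ is $|LCS(X^a,Y^a)|+g+|LCS(X^c,Y^c)|$, where $g:=|LCS(B,Y^b)|$ equals the number of occurrences in $Y^b$ of the single block letter. Deleting the block from $\pi$ exhibits an alignment of $X^{ac}$ with $Y$ of score $|LCS(X,Y)|-g$, so $|LCS(X,Y)|-|LCS(X^{ac},Y)|\le g$; hence if $E^d$ fails then $g\ge d^\alpha$, and in particular $h:=|Y^b|\ge d^\alpha$. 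Transferring $Y^b$ from the block to the alignment of $X^a$ (and, where necessary, of $X^c$) and aligning $B$ with gaps produces an alignment $\bar\pi$ whose optimality-forced inequality reads $\Delta\le g$, where $\Delta$ is the transfer gain $|LCS(X^a,Y^aY^b)|-|LCS(X^a,Y^a)|$ of Part~I of Section~\ref{mainideas}. A preliminary balance lemma (a union bound over split positions, each controlled by concentration of the two piece-LCS's around their $\gamma_k$-values together with the strict drop of $\gamma_k$ beyond $p_M$) confines the admissible imbalances to $[-p_M-o(1),\,p_M+o(1)]$; because $\gamma_k$ is flat there, its relevant one-sided derivative being $\gamma_k'(p_M^+)=0$, the $p_M$-analogue of \eqref{orderexpectDelta2} gives $\E\Delta\ge\gamma^*_k h/2+o(h)$, while $\E g\le h/k+o(h)$ with $g\le d^\beta$ always. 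Thus $\E(\Delta-g)\ge\delta h+o(h)$ with $\delta:=\gamma^*_k/2-1/k>0$—the positivity being exactly $k\gamma^*_k>2$—for $h=O(d^\beta)$, while for larger $h$ the loss $g$ saturates at $d^\beta$ against a transfer gain of strictly larger order, so in either regime $\Delta\le g$ forces a downward deviation of $\Delta-g$ from its mean of at least a constant multiple of $h$, hence at least of order $d^\alpha$. By \eqref{inequ} (and Hoeffding for the binomial $g$), which place the fluctuations of $\Delta-g$ on scale $\sqrt d$, such a deviation has probability at most $e^{-c(d^\alpha)^2/d}=e^{-c'd^{2\alpha-1}}$ for each fixed window, and a union bound over the at most $(2d)^2$ positions and lengths $h\ge d^\alpha$ of $Y^b$ costs only a polynomial factor, giving $\P((E^d)^c)\le e^{-C_Ed^{2\alpha-1}}$.

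The main obstacle is precisely this surgery for $E^d$: the Part~I estimate must be applied at the data-dependent split induced by the unknown optimal $\pi$, whose balance is not given in advance. The delicate point is the balance lemma and the uniformity it must provide, made more subtle by the fact that $\gamma_k'(p_M^+)=0$ leaves $\gamma_k$ very flat just beyond $p_M$, so the imbalances are controlled only to within $p_M+o(1)$; one must then verify that over this whole range, and over the full range of window lengths $h\ge d^\alpha$ (including $h$ of order $d$, where $g$ saturates), the net effect of the transfer is the positive quantity $\delta h$. This is exactly where the differentiability of $\gamma_k$ at $p_M$ is used, and where, under the weaker Condition~3, the extra term of \eqref{orderexpectDelta2} would have to be carried throughout. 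By comparison, estimate (b) and the passage to all $d\ge1$ are routine once the per-window large-deviation bound on scale $\sqrt d$ is in hand.
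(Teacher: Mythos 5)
Your proposal is correct, and it splits into two halves of different character relative to the paper. For $E^d$ you follow essentially the paper's own route: the paper proves $B^d\cap C^d\cap D^d\subset E^d$ (Lemma~\ref{combi1}) and then shows each event holds with high probability, where $D^d$ is exactly your ``balance lemma'' (Lemma~\ref{lemmaD}, proved by comparing $\E L(i)$ for unbalanced splits against \eqref{alex} and applying Hoeffding), and $C^d$ encodes the Part~I gain estimate via Lemma~\ref{difficult}. The one organizational difference is that the paper avoids your union bound over \emph{variable-length} windows: it introduces $B^d$ (no letter occurs more than $h/\kappa$ times in any window of length $h=\kappa d^{\alpha}$) and then only ever needs the gain event $C^d$ for windows of the \emph{fixed} length $\kappa d^{\alpha}$, surgically removing just the first $d^{\alpha}$ aligned block letters; monotonicity of the LCS in appended letters does the rest, so the union bound runs over $O(d)$ positions rather than your $O(d^2)$ position-length pairs (either works, since the cost is polynomial). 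Your treatment of $K^d$, however, is genuinely different and simpler than the paper's. The paper proves $B^d\cap C^d\cap D^d\cap F^d\subset K^d$ (Lemma~\ref{combiO}), which requires a second surgery at the data-dependent position $i$ where $X_{d-(\l/2)}$ is aligned, together with the additional high-probability event $F^d$ (Lemma~\ref{lemmaF}) asserting a gain of $\gamma_k^a d^\beta/2$ when $d^\beta$ iid symbols are appended to the $X$-side at any balanced $i\in[i_1,i_2]$. You instead derive $K^d$ purely algebraically from $E^d$ together with the \emph{unconditional} estimate $|LCS(X^*;Y)|-|LCS(X^{ac};Y)|\geq (\gamma_k^a/2)d^\beta$, proved by comparing expectations --- $\E|LCS(X^*;Y)|\geq 2d\gamma^*_k-C_L\sqrt{2d\ln 2d}$ from \eqref{alex}, and $\E|LCS(X^{ac};Y)|\leq (2d-d^\beta/2)\gamma^*_k$ which follows even without differentiability from the superadditivity bound \eqref{gamma2} --- and then applying McDiarmid-type concentration to the difference, which is a function of the shared iid coordinates $X^a,X^c,B^*,Y$. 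This buys you a $K^d$ proof that involves no optimal alignment, no balance event, and no analogue of $F^d$; the only caveat is the one you already flag, namely that the entire burden then rests on $E^d$, whose balance step should be stated (as in the paper) with a fixed interval $[-q,q]$, $\gamma_k(q)=\gamma_k^c<\gamma^*_k$, on which the difference-quotient bound \eqref{gamma'} holds, rather than the looser $[-p_M-o(1),p_M+o(1)]$; the resulting loss $(\gamma^*_k-\gamma_k^c)h/2$ is harmless because the margin $\gamma^*_k/2-1/k$ is a fixed positive constant under the hypothesis $k\gamma^*_k>2$.
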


To give the result for the two-letter case, some more definitions are needed.  

\noindent
{\bf Let $G^d$ be the event that the long constant block gets mainly  
aligned with symbols and not with gaps.}  More precisely,  
$G^d$ is the event that the long constant block has 
(in any optimal alignment) at most 
$d^{\alpha}$ of its symbols aligned with gaps.   Equivalently,  
leaving out $d^{\alpha}$ symbols from the long constant block decreases 
the LCS by at least one unit.  Hence,   
$$G^d:=\left\{|LCS(X;Y)|>|LCS(X_1X_2\ldots X_{d-(\l/2)}X_{d-(\l/2)+d^{\alpha}+1}
X_{d-(\l/2)+d^{\alpha}+2}\ldots X_{2d};Y)|\right\}.$$
($X_1X_2\ldots X_{d-(\l/2)}X_{d-(\l/2)+d^{\alpha}+1} 
X_{d-(\l/2)+d^{\alpha}+2}\ldots X_{2d}$ is simply the string $X_1X_2\ldots X_{2d}$ from which the
piece $X_{d-(\l/2)+1}X_{d-(\l/2)+2}\ldots X_{d-(\l/2)+d^{\alpha}}$ has been removed.)

\paragraph{Let $H^d$ be the event that 
replacing the long constant block with iid symbols increases the LCS 
by at least ${\tilde c}_Hd^\beta$.} 
Here ${\tilde c}_H>0$ is any constant independent of $d$ and such that 
\begin{equation} 
\label{ch}{\tilde c}_H<\frac{3\gamma^*_2}2-1, 
\end{equation}
and so 
$$H^d:=\left\{|LCS(X^*;Y)|
-|LCS(X;Y)|\geq {\tilde c}_Hd^\beta\right\}.
$$

Let us next formulate our second main result for the two-letter case.

\begin{theorem}
\label{maintheoremsubsection2}
Let
$k\gamma^*_k < 2$,
and let the mean LCS function
$\gamma_2: (-1,1)\rightarrow\mathbb{R}$, 
be differentiable at $p_M$.  Then, 
there exist constants $C_G>0,C_H>0$, independent of $d$,  such that 
$$\P(G^d)\geq 1-e^{-C_Gd^{2\alpha-1}},$$
and 
$$\P(H^d)\geq 1-e^{-C_Hd^{2\alpha-1}},$$
for all $d\geq 1$.
\end{theorem}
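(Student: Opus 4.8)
The plan is to translate the heuristic cost--benefit computation of Section~\ref{mainideas} into two expectation estimates and then upgrade each to a high--probability statement via the bounded--difference (Hoeffding/McDiarmid) inequality \eqref{inequ}. Throughout I condition on the value $s$ of the constant block symbol; by equiprobability nothing depends on $s$, and after conditioning every LCS--quantity below is a function of the remaining $\asymp 4d$ independent coordinates (the $2d$ letters of $Y$, the $2d-d^\beta$ iid letters of $X^a,X^c$, and, for $X^*$, the $d^\beta$ iid letters of the replacing block), each of which alters the quantity by at most $2$. Hence every such quantity $Q$ obeys $\P(|Q-\E Q|\ge t)\le 2\exp(-ct^2/d)$; with $t\asymp d^\alpha$ (resp. $t\asymp d^\beta$) this yields the exponent $d^{2\alpha-1}$ (resp. $d^{2\beta-1}\ge d^{2\alpha-1}$) of the theorem. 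Since $\alpha>1/2$, all these fluctuations are of smaller order than the signals $d^\alpha,d^\beta$, so it remains only to pin down the relevant means.

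For $H^d$ I would bound the two scores separately. As $X^*$ and $Y$ are two iid strings of length $2d$, Alexander's bound \eqref{alex} gives $\E|LCS(X^*;Y)|\ge 2d\gamma_2^*-C_L\sqrt{2d\log 2d}$, and \eqref{inequ} then yields $|LCS(X^*;Y)|\ge 2d\gamma_2^*-o(d^\beta)$ with high probability. For $X=X^aBX^c$, any alignment with $Y$ induces a split $Y=Y^aY^bY^c$ ($B\leftrightarrow Y^b$), and concatenating subsequences gives the pointwise bound
\[
|LCS(X;Y)|\le\max_{Y^aY^bY^c}\Big(|LCS(X^aX^c,Y^aY^c)|+\min(d^\beta,N_s(Y^b))\Big),
\]
where $N_s(Y^b)$ counts the letters equal to $s$ in $Y^b$. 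Controlling the first term by $\gamma_2^*\,\tfrac{(2d-d^\beta)+(2d-|Y^b|)}{2}+o(d^\alpha)$ (using $\gamma_2(p)\le\gamma_2(0)=\gamma_2^*$ and \eqref{alexander}) and $N_s(Y^b)$ by $|Y^b|/2+o(d^\alpha)$ (binomial concentration), uniformly over the $O(d^2)$ splits, the maximum is attained at $|Y^b|\approx 2d^\beta$ --- exactly where $\gamma_2^*/2<1/2$ makes matching the block worthwhile --- so that with high probability $|LCS(X;Y)|\le 2d\gamma_2^*-(\tfrac{3\gamma_2^*}{2}-1)d^\beta+o(d^\beta)$. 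Subtracting, the gain is at least $(\tfrac{3\gamma_2^*}{2}-1)d^\beta-o(d^\beta)>\tilde c_Hd^\beta$ by \eqref{ch}, giving $\P(H^d)\ge1-e^{-C_Hd^{2\alpha-1}}$.

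For $G^d$ the target is the much more delicate $d^\alpha$--scale estimate $\E\big[|LCS(X;Y)|-|LCS(X';Y)|\big]\ge c\,d^\alpha$, where $X'$ is $X$ with its first $d^\alpha$ block letters deleted, i.e. the same string with block length $d^\beta-d^\alpha$. Writing $g(\ell)=|LCS(X^aB^{(\ell)}X^c;Y)|$ for the monotone, unit--increment score as a function of block length, I would show $\E[g(\ell+1)-g(\ell)]=\P(g(\ell+1)>g(\ell))\ge 1-\gamma_2^*-o(1)$ for each $\ell\in[d^\beta-d^\alpha,d^\beta)$ --- the marginal value of one more block symbol, which the $|Y^b|\approx 2d^\beta$ saturation found in $H^d$ predicts to equal $1-\gamma_2^*>0$ --- and then sum over the $d^\alpha$ deleted symbols. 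Concentrating $g(d^\beta)-g(d^\beta-d^\alpha)$ about its mean $\ge(1-\gamma_2^*)d^\alpha$ then gives $|LCS(X;Y)|>|LCS(X';Y)|$, hence $G^d$, with probability $\ge 1-e^{-C_Gd^{2\alpha-1}}$.

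The main obstacle is precisely this $G^d$ estimate. Because the effect to detect, $(1-\gamma_2^*)d^\alpha$, lies beneath the $d^\beta$--scale cost of the whole block, one cannot simply subtract two global estimates of $\E|LCS(X;Y)|$ and $\E|LCS(X';Y)|$: the lower--bound construction for $X$ carries a loss $(\gamma_2^*-\gamma_2(p))\bar n$ from the $O(d^\beta/d)$ length imbalance of the iid parts, and this error is only $o(d^\beta)$, not $o(d^\alpha)$. The differentiability of $\gamma_k$ at $p_M$ (Condition~2), equivalently $\gamma_2'(p_M)=0$, is exactly what is needed here: it forces these correction terms for $X$ and for $X'$ --- whose imbalances differ only by $O(d^{\alpha-1})$ --- to cancel to order $o(d^\alpha)$, equivalently it pins $\P(g(\ell+1)>g(\ell))$ to $1-\gamma_2^*+o(1)$ uniformly in $\ell$. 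Making this cancellation (or the marginal estimate) rigorous, uniformly over the random alignment geometry and simultaneously over all $O(d^2)$ admissible splits, is the technical heart; everything else reduces to the estimates \eqref{alex}, \eqref{alexander} and the concentration inequality \eqref{inequ}.
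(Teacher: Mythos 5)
Your treatment of $H^d$ is sound and genuinely different from the paper's: you bound $|LCS(X;Y)|$ from above by a union bound over the $O(d^2)$ splits $Y=Y^aY^bY^c$ (using $\gamma_2(p)\le\gamma_2(0)=\gamma_2^*$, Alexander's rate \eqref{alexander}, and binomial concentration for $N_s(Y^b)$, the maximum over splits being attained at $|Y^b|\approx 2d^\beta$), bound $|LCS(X^*;Y)|$ from below by \eqref{alex}, and concentrate via \eqref{inequ}. The paper instead derives $H^d$ from the already-established $G^d$ via Lemma~\ref{Bd*}: on $\tilde B^d_{II}\cap\tilde C^d_{II}\cap D^d$, the matched block must occupy a piece of $Y$ of length at least $\tilde k_{II}d^\beta$, and liberating it is worth $d^\beta(1+\tilde k_{II})\tilde\gamma_2/2$. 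Your route is self-contained, avoids $D^d$ and the events $\tilde B^d_{II},\tilde C^d_{II}$ entirely, and does not even invoke differentiability of $\gamma_2$ for this half; that is a real simplification.

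The genuine gap is $G^d$. Your plan hinges on $\E\bigl[g(d^\beta)-g(d^\beta-d^\alpha)\bigr]\ge(1-\gamma_2^*)d^\alpha-o(d^\alpha)$ (equivalently the per-letter estimate $\E[g(\ell+1)-g(\ell)]\ge 1-\gamma_2^*-o(1)$), and this is exactly what you do not prove; you flag it yourself as the unproven ``technical heart.'' The difficulty is structural, not routine: any argument estimating the two expectations separately, by constructions or split-maxima as in your $H^d$ part, carries $o(d^\beta)$ slack coming from the unknown behavior of $\gamma_2$ at imbalances of order $d^{\beta-1}$ (differentiability at $p_M$ only yields $\gamma_2(p)=\gamma_2^*-o(|p|)$, hence errors $o(d^\beta)$, not $o(d^\alpha)$), which swamps the $d^\alpha$ signal. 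Nor can the ``cancellation'' you invoke be executed at the level of expectations, because $g(d^\beta)$ and $g(d^\beta-d^\alpha)$ are maxima over splits of two \emph{different} functionals whose maximizing geometries need not coincide; the elementary inequality $\max_i(A_i+B_i)-\max_i(A_i+C_i)\ge\min_i(B_i-C_i)$ gives only $\ge 0$ here, since splits with short $Y^b$ have $B_i=C_i$. What closes this gap in the paper (Lemma~\ref{subsetG}) is a pathwise exchange on a \emph{single} optimal alignment $\pi$: if $\pi$ aligned $d^\alpha$ block symbols with gaps, then on $B^d_{II}$ (every window of $Y$ of length $h=k_{II}d^\alpha$ contains at least $d^\alpha$ of each symbol), $D^d$ (the block boundary lands in $[i_1,i_2]$, Lemma~\ref{lemmaD}), and $C^d_{II}$ (un-aligning such a window from the iid part costs at most $h\gamma_2^{II}/2$), one realigns to gain $d^\alpha(1-k_{II}\gamma_2^{II}/2)>0$, contradicting optimality. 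The point is that $C^d_{II}$ involves only increments of an LCS under appending $O(d^\alpha)$ letters to one string, where the imbalance moves by $O(d^{\alpha-1})$, so Alexander's bound plus the near-flatness of $\gamma_2$ near $p_M$ (conditions \eqref{gamma'}, \eqref{gamma'bis}) do deliver precision of a small constant times $d^\alpha$; no comparison of two global expectations is ever made. Without this exchange step, or an equivalent replacement for your marginal estimate, your proof of $G^d$, and hence of the theorem, is incomplete.
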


The situation encountered for two letters 
might seem counter-intuitive at first.  Let us explain why: 
Consider for this two binary sequences of length $n$ where 
one string is made out only of ones while the other is 
made out of equiprobable zeros and ones.  Then the length of the LCS is the number of 
ones in the sequence 
with both symbols.  Since both symbols have probability $1/2$, the length of the 
LCS is approximately $1/2$ times the length of the strings.  However,  
for two binary iid sequences, the average length of the LCS is about 
$0.8$ times the length.  Hence, the LCS is much greater for two iid sequences, 
than when one sequence is made up of only one letter (i.e., one 
sequence is just ``a long constant block").  Thus, one would think 
that when within a sequence one gets an exceptionally long constant 
block, this should typically decrease the total LCS. 
Hence, since a long constant block  ``scores" much less than a typical 
piece of string iid drawn, one would expect that the long constant block tends 
to be ``left out" and not used too much (and hence tends to be aligned with many gaps).  
But the opposite is true!  Also, in optimal alignment,  
similar strings tend to be matched. Since a long constant block, 
is very different from an iid string, it thus would 
seem that a long block should be ``left out" and mainly 
matched with gaps.  This typically happens with three or more letters, 
but with two letters, the opposite is true.

Let us next further explain the binary situation on a illustrative example 
with two strings aligned in three different ways.  
Let $x=100101111 00000 101101101$
and $y=011110010 11011 011101001$ be two strings of total length 23 with 
$x$ containing a long constant block $00000$ of length $\l=5$.  
Consider now three alignments of $x$ and $y$. First, an alignment 
aligning the long block only with digits and with no gap:

{\footnotesize
$$
\begin{array}{c|c|c|c|c| c|c|c|c|c| c|c|c|c|c| c|c|c|c|c| c|c|c|c}
x& &1&0&0&1&0&1111& &00& &0&  &0&  & 0& &1&0&1101&1&0& &1\\\hline
y& & &0& &1& &111 & &00&1&0&11&0&11& 0& &1& &1101& &0&0&1
\end{array}
$$}

Here the long block $00000$ gets aligned with $0010110110$ 
having a length of $10$ which is 
twice the length of the long block. This is to be expected 
since the probability of $0$ is $1/2$, and so a string of length approximately $2\l$ is needed 
to get $\l$ zeros.  The above alignment can be viewed as consisting of 
three parts: the part to the left of the long block in $x$, the aligned long block, 
and the part to the right of the long block.  The part to the left  
aligns $5$ letter-pairs, the long block also gives $5$ letter-pairs and 
the piece to its right gives $7$ of them.   The total 
number of aligned letter-pairs in this alignment is thus $17$.  

Let us next try as second alignment, an alignment aligning the long block 
with a piece of string of similar size, e.g., of length $7$. 
For example, the alignment:  

{\footnotesize
\begin{equation}
\label{badalign}
\begin{array}
{c|c|c|c|c| c|c|c|c|c| c|c|c|c|c| c|c|c|c|c| c|c|c|c|c| c|c|c|c}
y& &0&1&  &111&0&   &0&1& &0  &11&0&11&  & & 0&1& &1&1&01& &0& & &0&1\\\hline
x& & &1&00& 1 &0&1 1& &1&1&000&  &0&  &  & & 0& & &1& &01&1&0&1&1&0&1
\end{array}
\end{equation}
}

This second alignment gives $4+3+6=13$ aligned letter-pairs which, as predicted, is a fewer  
number than the previous one:  Indeed, 
when aligning the long block entirely with letters and no gaps, 
the score tends to be higher.   
In the second alignment, the long block gets aligned with the piece of string $0110110$  
and with two of the zeros aligned with gaps.   Let us show, next, how to slightly modify 
this second alignment to provide a third alignment 
with an increased the total score.  
For this, take the two zeros from 
the long block which are aligned with gaps and align them with zeros from the 
string $y$.  To do so, take a piece of $y$ 
to the left of $y_9$ containing two zeros, i.e., take $y_6y_7y_8=001$.  Now align 
the two ``unused'' zeros, $x_{10} and x_{11}$ from the long block, with $y_6$ and $y_7$.  
Aligning the two  ``unused zeros'' leads to a score-gain of two, but at 
the same time to a loss, since previously  $y_6y_7y_8$ was aligned with $x_5x_6\ldots x_8=0111$.   
So, the previous alignment of $y_6y_7y_8$ with $x_5x_6x_7x_8$ 
has been destroyed creating a score-loss of two.  
However, $x_5x_6x_7x_8$ is now ``free,'' and so can be ``included'' into the alignment 
of $y_1y_2\ldots y_5$ with $x_1\ldots x_4$, meaning that $x_7x_8$ is aligned with $y_4y_5$.  
This addition gives a score-increase of two, and the total score-change is 
$2-2+2=2$.  Let us represent in ``toy'' form the three phases of the evolution between the 
second and third alignment (only 
the part of the alignment which is been modified is shown below): 
$$\begin{array}
{c|c|c|c|c| c|c|c|c|c| c|c|c }
y& &0&1&  &111&0&   &0&1& &  &\ldots\\\hline
x& & &1&00& 1 &0&1 1& &1&1&00&\ldots
\end{array}
$$
The first phase consists in aligning the two unused bits $x_{10}x_{11}$ 
from the long block with $y_7y_8$:  
$$\begin{array}
{c|c|c|c|c| c|c|c|c|c| c|c|c|c }
y& &0&1&  &111& &   & & & &00&1&\ldots\\\hline
x& & &1&00& 1 &0&1 1& &1&1&00& &\ldots
\end{array}
$$
This phase leads to a gain of two aligned letters since $x_{10}x_{11}$ gets aligned 
with $y_7y_8$, but at the same time to a loss of two aligned letter-pairs, 
since previously $x_5x_6x_7x_8x_9$ had two aligned letters and has none now. 
Next, ``bring the string $x_5x_6x_7x_8x_9=01111$ into the alignment":   
\begin{equation}
\label{smallalign}
\begin{array}
{c|c|c|c|c| c  }
y& &0&1&  &111\\\hline
x& & &1&00& 1 
\end{array}
\end{equation}
Now, in the alignment \eqref{smallalign},  
two ones on the right-end of $y$ are free (e.g., 
$y_4y_5$) providing, when aligned with two of the ones from 
$x_5x_6x_7x_8x_9$, two additional aligned letter-pairs with end result:  
$$\begin{array}
{c|c|c|c|c| c|c|c|c|c| c|c|c|c }
y& &0&1&  &1  & &11 & & & &00&1&\ldots\\\hline
x& & &1&00& 1 &0&1 1& &1&1&00& &\ldots
\end{array}
$$
The total score change is $2>0$ and therefore the alignment \eqref{badalign} 
cannot be optimal.  

In the second alignment, two zeros from the long constant 
block are not aligned with symbols.  Assume that instead of just $2$, we would have 
$j$, where $j$ is not too small.  Then, to align these $j$ zeros with 
zeros from the string $y$ would require a string of length approximately $2j$ 
in $y$ (in order to find $j$ zeros each having probability approximately $1/2$ of occurring). 
(Above, the piece of string from $y$ with which 
the free zeros from the long block were aligned was $y_6y_7y_8$ and had 
length $3$.)  Before changing the alignment, these $2j$ bits from $y$ were 
most likely aligned with about $2j$ bits from $x$. (Above, 
these bits are: $x_5x_6x_7x_8x_9$.)  When aligning these 
additional bits which became free, an approximate score-gain of 
$2j\gamma^*_2/2=j\gamma^*_2 \approx 0.8j$ 
is to be expected (with two sequences of length 
$j$ and so a total of $2j$ bits, the score is approximately equal 
to $j\gamma^*_2$).  Hence, the ratio score/bits is $\gamma^*_2/2$. 
(Using this average is a purely heuristic, since there is no proof that adding 
bits on one side of an alignment only produces an average 
increase of $\gamma^*_2/2$ per bit.)  Summing up:
\begin{itemize}
\item[a)] The new alignment of the $j$ free bits from the long block leads to a score-increase of $j$.
\item[b)] Undoing the previous alignment of the piece of string 
of $y$ which now gets aligned with the free 
bits of the long block, leads to an approximate loss of $2j\gamma^*_2$ 
bits since that piece has approximate length $2j$. 
\item[c)] Realigning the piece of $x$, which was previously aligned with the piece of $y$ 
getting now aligned with the free bits of the long block, leads to a score-gain.  
Since this last piece has an approximate length of $2j$, 
the score-gain is approximately $2j\gamma^*_2/2=j\gamma^*_2j$.  
\end{itemize}
Therefore, the total score-change is 
$$j-2j\gamma^*_2+j\gamma^*_2=j-j\gamma^*_2\approx 0.2j>0.$$

From the $2$-letter-strings examples presented above, 
the tendency is to align a long constant block with barely any gap. 
How much is then gained by replacing the long constant block by 
an iid piece?  Here is an heuristic answer: the long block gives 
$\l$ units, but uses a piece of length $2\l$ 
in the $y$-string.  This piece becoming 
free leads to a gain of $2\l$ bits plus the $\l$-bits from the long block. 
Hence using $3\l$ bits to get $\l$ points, and believing in 
the ``average point/bit number hypothesis'' for $\gamma^*_2/2$, 
lead to an approximate gain of $3\l\gamma^*_2/2$ aligned letter pairs. That 
is after replacing the long constant block by an iid piece, 
and realigning all the $3\l$ bits which were previously  
used with the long block.  Hence, approximately $3\l\gamma^*_2/2-\l\approx 0.215\l$ additional 
letter-pairs are available; for example, a long block of length 
$20$ would lead to an approximate average gain of $4$.  Extensive simulations,  
listed in the next section, demonstrate something very close.

\section{Simulations and the Nature of Alignments}\label{simulations}
It is very unlikely in an iid sequence of length $2d$ to find a constant block of length $d^\beta$.  
Typically, the blocks reach a length whose order is linear in $\ln d$.   
Nonetheless, our results proved for artificially inserted long blocks 
can be observed in simulations for naturally occurring block-lengths.  Our simulations 
are presented below.  

The first table gives estimates for the expected number of 
gaps in a block of length $\l$ placed in the middle of a string of length 
$1000$ (except for $\l>100$ where the string has length $4000$) 
as a function of $k$, the number of letters.  Since several 
optimal alignments might exist, we chose the one putting a maximum number of 
gaps into the long block.   Inserting a constant block with naturally-occurring length 
is similar to finding a constant block of that length in an iid sequence.  
Indeed, assume that there is a constant block of length $\l$, $\l$ not too small.  
Then, until such a block appears in an iid equiprobable binary sequence,  
it takes an expected $2^{\l}$ letters.  But, the contribution 
to the optimal alignment score of such a block would be at most $\l$, which is much smaller 
than the amount of symbols needed before encountering that block.  
So, heuristically, the constant block of length $\l$ has very little effect on the optimal alignment.  
Hence, the optimal alignment should more or less determine which parts get aligned with each other 
without regard to the long constant block.  This could then indicate that in terms of the number 
of gaps it gets aligned with, this long 
constant block behaves as if it had been artificially inserted.

{\footnotesize
$$
\begin{array}{c|c|c|c|c|c|c|c|c|c|c|c}
   &\l=1 &\l=2 &\l=5 &\l=10&\l=20&\l=30&\l=50&\l=100&\l=200&\l=300&\l=400\\\hline
k=2&0.53&1.67&2.25&2.75&4.2 &6.17&8.16&14.68&12.26&14.2 &19.6\\\hline
k=3&    &    &2.85&4.6 &12.5&18  &32.3&70.64&152.6&226  &\\\hline
k=4&0.72&1.19&3.27&6.78&16.3&25.6&43.8&88.4 &     &     &\\\hline
k=5&    &1.6 &3.36&7.76&16.3&27.1&49.7&96.2 &     &     &\\\hline
k=6&    &1.43&3.67&8.32&17.2&28.2&47.7&97.1 &     &     & \\\hline 
k=7&    &1.53&3.82&8.6 &18.7&27.9&48.6&98.1 &     &     & \\\hline
k=9&    &    &4.23&8.7 &18.4&29.2&48.4&     &     &     &
\end{array}
$$}

For each entry $100$ independent simulations are run.  
For each simulation, we find the number of 
gaps the block of length $\l$ gets aligned with and then compute the average of that number over the 
$100$ simulations.  This gives the entries 
of the above table.  The next table provides estimates for the ratio of the expected number 
of gaps and the length of the block.   Therefore, the next 
table is obtained from the previous one by dividing 
each entry by the value $\l$ corresponding to its column.  
The entries in the next table thus represent the 
``proportion of gaps'' in the long blocks depending on 
the length of the long block:

{\footnotesize
$$
\begin{array}{c|c|c|c|c|c|c|c|c|c|c|c}
   &\l=1 &\l=2 &\l=5 &\l=10&\l=20&\l=30&\l=50&\l=100&\l=200&\l=300&\l=400\\\hline
k=2&0.53&0.83&0.45&0.27&0.21&0.20&0.16&0.14 &0.06 &0.04 &0.04\\\hline
k=3&    &    &0.19&0.15&0.62&0.6 &0.64&0.7  &0.76 &0.75 & \\\hline
k=4&0.72&0.59&0.65&0.67&0.81&0.85&0.87&0.88 &     &     &\\\hline
k=5&    &0.8 &0.67&0.77&0.81&0.90&0.99&0.96 &     &     &\\\hline
k=6&    &0.7 &0.67&0.83&0.86&0.94&0.95&0.97 &     &     & \\\hline 
k=7&    &0.75&0.76&0.86&0.93&0.93&0.97&0.98 &     &     & \\\hline
k=9&    &    &0.8 &0.87&0.92&0.97&0.96&
\end{array}
$$
}

As seen above, with two letters, the proportion of gaps decreases as the length of the block 
increases, 
while for $k\geq 3$ the opposite is true ($k=3$ seems to be a close to the critical point, 
so this phenomenon kicks in only slowly).  Even for small block-length such as $\l=5$, this 
zero-one law seems to occur and, therefore, the micro-structure of the optimal alignment 
seems rather different for $k=2$ or $k\ge 3$.  

{\bf Which heuristic argument could explain that the result for artificially inserted 
long blocks result implies a similar one for iid sequences?}  
The simulations show that for naturally occurring long constant blocks, the phenomenon proved for 
artificially inserted ones continue to hold.  Now, for a block of length 
$\l_B$ much smaller than $d^\beta$ take the neighborhood 
of size $\l_B^{1/\beta}$ of that block.  In the optimal alignment 
of $X$ and $Y$, that neighborhood should also typically be aligned 
optimally.  So for that part of the alignment our 
results should apply.  Let us present an example: In the simulations 
when simulating the sequences $X$ and $Y$ of length 1000, 
replace in the sequence $X$ a piece of length $10$  
by a block of length $10$ somewhere in the middle of $X$, 
and then count the number of gaps it gets aligned with.  
An approach which would yield very similar results, 
would consist in finding the block of length $10$ 
closest to the middle of $X$ and then counting the number of 
gaps that block is aligned with in an optimal alignment.   
In simulations, by repeating these two operations a great number of times, 
in order to estimate the expected number of gaps 
a block of length $10$ gets aligned with, we find no significant 
difference between the two methods.   Heuristically, this lead to an important 
consequence:  Simulations seem to demonstrate that the results, on the proportion 
of aligned gaps in iid sequences with artificially inserted long blocks,   
for the naturally appearing long blocks appearing in an iid sequence continue to hold 
for the naturally appearing long blocks in an iid sequence.

Let us next display results giving the different numbers of gaps obtained 
at each simulation run. 
This should provide the reader with a sense for the order of the variance 
of the number of gaps in long blocks, when the length of the long block 
is held fixed.  Below $i$ is the result obtained 
with the $i$-th simulation.  Only blocks of length 
$\l=100$ are considered in the next table.

{\footnotesize
$$
\begin{array}{c|c|c|c|c| c|c|c|c|c|c}
   &i=1 &i=2 &i=3&i=4&i=5&i=6&i=7&i=8&i=9&i=10\\\hline
k=2&2   &1   &2  &31 &9  &0  &1  &3  &5  &7   \\\hline
k=3& 100&97  &30 &66 &76 &79 & 73&93 &74 &91 \\\hline
k=4&98  & 98 &99 &100&99 &93 &99&99  &100&60\\\hline
k=8&99  &100 &100&95 &100&99 &98&98  &100&99    \\\hline

\end{array}
$$
}

Let us further examine some of the entries in the table right above.  
For $k=4$ letters, two out of the ten simulations give $100$ gaps, 
four out of the ten give $99$ gaps, and once the much lower value of $60$ gaps.   
This seems to indicate that the number of gaps has a strongly 
skewed distribution.  Above the median estimate is $98.5$ which should be compared 
with the estimated expected number of gaps $88.4$ given in the first table.  
For the two-letter case, the respective estimates are $2.5$ and $14.68$. 
It thus appears that to take into account this skewness, a median estimation 
might be more appropriate than an expectation estimate and 
the discrepancy between the two-letter situation and the situation 
with more letters becomes even more pronounced when looking at the median.

The entries in the next table give the difference between the length of the  
LCSs when replacing the long block 
with iid entries in sequences of length $2d=1000$.  Again, 
$\l$ is the block length and $k$ the number of letters.  For each entry 
$100$ simulation runs are averaged.  Here and below the results for the small values of 
$\ell$ are displayed to show the progression the behavior as $\l$ increases.   

{\footnotesize
$$
\begin{array}{c|c|c|c|c|c|c|c|c|c|c|c}
   &\l=1 &\l=2  &\l=5 &\l=10&\l=20&\l=30&\l=50&\l=100&\l=200&\l=300&\l=400\\\hline
k=2&-0.01&0.06&0.52&0.9 &2.88&4.7 &9.48&21.8 &44.3 &73.5 &88.5 \\\hline
k=3&    &    &0.45&1.36&4.55&7.62&14.5&32.8  &68.9 &     &      \\\hline
k=4&0.03&0.06&0.59&1.85&5.3 &8.86&14.32&31.4 &     &     &\\\hline
k=5&    &0.2 &0.58&1.78&4.88&7.81&14.18&29.9 &     &     &\\\hline
k=6&    &0.1 &0.53&1.86&4.42&7.7 &12.8 &27.9 &     &     & \\\hline 
k=7&    &0.13&0.7 &2.05&4.7&7.3  &13.1 &27.28&     &     &\\\hline
k=9&    &    &0.7 &1.85&4.33&7.26&11.6 &     &     &     &    
\end{array}
$$
}

The next tables display the values to expect, from 
our heuristic arguments, for the typical increase in LCS for long constant 
blocks and the values obtained through simulations.  To start, 
let $k=2$, in which case our predicted change 
in LCS due to the replacement of the long block of length $\l$ 
is $(\gamma^*_2/2)3\l-\l\approx 0.215\l$.  

\renewcommand{\arraystretch}{1.5}
{\footnotesize
$$
\begin{array}{c|c|c|c|c|c|c|c|c|c|c|c}
        &\l=1 &\l=2  &\l=5 &\l=10&\l=20&\l=30&\l=50&\l=100&\l=200&\l=300&\l=400\!\!\\\hline
0.215\l  &0.215&0.43&1.07&2.15&4.3 &6.45&10.7&21.5  &43   &64.5  &86\\
\hline
\widehat{\E}
\Delta&-0.01&0.06&0.52&0.9 &2.88&4.7 &9.48&21.8 &44.3 &73.5 &88.5 \\
\hline
\end{array}
$$
}
Let us next compare simulated values with predicted values 
for $4$ letters alphabet where an 
increase of $(\gamma^*_4/2)\l\approx 0.325\l$ is expected.

{\footnotesize
$$
\begin{array}{c|c|c|c|c|c|c|c|c|c|c|c}
      &\l=1  &\l=2  &\l=5 &\l=10&\l=20&\l=30&\l=50&\l=100&\l=200&\l=300&\l=400\\\hline
0.325\l&0.325&0.65 &1.62&3.25&6.5&9.7&16.2&32.5   &     &     &\\\hline
\widehat{\E} \Delta
      &0.03 &0.06&0.59&1.85&5.3 &8.86&14.32&31.4 &     &     &\\\hline
 
\end{array}
$$}
Comparisons of simulated values with predicted values 
in case of $5$ letters, where an   
increase of $(\gamma^*_5/2)\l\approx 0.305\l$ is expected, are displayed next.

{\footnotesize
$$
\begin{array}{c|c|c|c|c|c|c|c|c|c|c|c}
      &\l=1  &\l=2  &\l=5 &\l=10&\l=20&\l=30&\l=50&\l=100&\l=200&\l=300&\l=400\\\hline
0.305\l&     &0.61&1.52&3.05&6.1  &9.15&15.2 &30.5&     &&\\\hline
\widehat{\E}\Delta
      &     &0.2 &0.58&1.78&4.88 &7.81&14.18&29.9  &     &     &\\\hline
 
\end{array}
$$}
Finally, for the $7$-letter case the increase is expected to be 
$(\gamma^*_7/2)\l\approx 0.27\l$.

{\footnotesize 
$$
\begin{array}{c|c|c|c|c|c|c|c|c|c|c|c}
      &\l=1  &\l=2  &\l=5 &\l=10&\l=20&\l=30&\l=50&\l=100&\l=200&\l=300&\l=400\\\hline
0.27\l&     &0.54 &1.35&2.70&5.4 &8.10&13.5&27.00&     &     &
\\\hline
\widehat{\E}\Delta
   &        &0.13&0.7 &2.05&4.7&7.3  & 13.1 &27.28   &     &     &\\\hline
 
\end{array}
$$}
As seen above, with more letters, the approximation is already quite good 
for blocks of lesser size.

\section{The Proofs}\label{proofs}
Throughout this section we are in the setting of 
Section~\ref{mainideas}:   
$X$ and $Y$ are two independent random sequences of length 
$2d$, the string $Y$ is iid while the string $X$ has a long constant block of 
size $\l$ (even) in its middle:  
$$\P\left(X_{d-(\l/2)+1}=X_{d-(\l/2)+2}=\ldots=X_{d+(\l/2)-1}=X_{d+(\l/2)}\right)=1,$$
and is iid everywhere else.   
Moreover, the symbols are equally likely on an alphabet of size $k$.  

\subsection{Proofs For Three Letters or More}\label{rigorous3}

In this subsection, assume that 
\begin{equation}
\label{3ormore}
k\gamma^*_k>2. 
\end{equation} 
To start with, some heuristic arguments are given to explain why under 
the condition \eqref{3ormore}, and in any optimal alignment, 
the artificially inserted long constant block is mainly aligned with gaps (see also  
Part I and Part II in Section~\ref{mainideas}), the proofs then follow.    

As far as the heuristics is concerned, proceed by contradiction.    
Indeed, assume on the contrary that there is an optimal alignment $\pi$ 
with $m$ symbols from the long constant block aligned with symbols. 
Then, by equiprobability, in order to get 
$m$ times the same letter in a contiguous substring of $Y$, 
typically requires a piece of length approximately equal to $km$.  Therefore, 
if $m$ symbols from the long constant block get aligned with 
symbols, then typically a piece of $Y$ of length approximately equal to $km$ 
is required.  Next, modify the alignment $\pi$.  To do so, 
take the piece of $Y$ which was used for the $m$ symbols of the long 
constant block and align it otherwise.  Let $\bar\pi$ be 
the new alignment obtained in this way.   In this way, $m$ aligned letters from the 
long constant block are lost but 
realigning the $km$ symbols of $Y$ adds approximately $km(\gamma^*_k/2)$ aligned symbols elsewhere.    
So the change is approximately 
$$\frac{\gamma^*_k}{2}km-m=m\left(\frac{k\gamma^*_k}{2}-1\right).$$ 
But from \eqref{3ormore}, $k\gamma^*_k >2$, 
and so the change due to realigning the $km$ symbols from 
$Y$ outside the long block, typically leads to an increase in the 
number of aligned symbols.  Hence, $\pi$ aligns fewer letter-pairs than 
$\bar\pi$, and therefore $\pi$ cannot be an optimal alignment.

Let us now proceed to the formal arguments and to do so, 
recall that in Section~\ref{statements} we defined: 
\begin{itemize}
\item $E^d$, the event that the long constant block is mainly aligned 
with gaps: 
$$
E^d=\left\{\!|LCS(X_1X_2\ldots X_{d-(\l/2)-1}X_{d-(\l/2)}X_{d+(\l/2)+1}\ldots 
X_{2d};Y)| 
+d^{\alpha}\!> LCS(X;Y)\!\right\}.$$
\item $K^d$, the event that replacing the long constant block with iid 
symbols leads to an approximate length-increase of $\gamma^*_k/2$ times 
the length of the long constant block:
$$K^d=\left\{|LCS(X^*;Y)| 
-|LCS(X;Y)|\geq \frac{\gamma_k^a}2 d^\beta-d^{\alpha}\right\}.$$
\end{itemize}

We intend to prove that if $\gamma^*_k/2>1/k$, 
then both events $E^d$ and $K^d$ occur with high probability, i.e., we intend to prove  
Theorem~\ref{maintheoremsubsection}.  For this proceed as follows: 
First define four events $B^d$, $C^d$, $D^d$ and $F^d$ and in 
Lemma~\ref{combi1} prove that 
$$B^d\cap C^d\cap D^d\subset E^d,$$
why Lemma~\ref{lemmaB}, \ref{lemmaC} and \ref{lemmaD} respectively show that 
$B^d$, $C^d$ and $D^d$ occur with high probability and thus 
so does $E^d$.  Next, Lemma~\ref{combiO} show that 
$$B^d\cap C^d\cap D^d\cap F^d \subset K^d,$$
while Lemma~\ref{lemmaF} shows that 
$F^d$ occurs with high probability and, thus, so does $K^d$.  

Recall also that $\alpha$ and $\beta$ and reals independent of $d$, such that 
$1/2 <\alpha<\beta<1$; that $d^{\beta}$ is the length of the artificially  
inserted long constant block and that $d^{\alpha}$ is the maximum number of 
symbols, from the long constant block, which 
can get aligned with symbols instead of gaps.  
Finally, for $p\in (-1,1)$, recall the definitions of 
$\gamma_k(n,p)$ and $\gamma_k(p)$ as respectively given in  
\eqref{gammaknp} and \eqref{meancurve} and the definition of $p_M$ given towards the 
end of the introductory section.

Let us next introduce some more notations.  
     
\begin{itemize} 
\item Let $\kappa$, $\gamma_k^a$, $\gamma_k^b$ and $\gamma_k^c$ 
be constants, independent of $d$, such that 
\begin{equation}
\label{kk_1}
\frac{2}{k}<\frac{2}{\kappa}<{\gamma_k^a}<{\gamma_k^b}<\gamma_k^c<{\gamma^*_k}.
\end{equation}
One thinks of $\kappa$ as being approximately equal to $k$ 
while $\gamma_k^a$, $\gamma_k^b$ and $\gamma_k^c$ are all very close to $\gamma^*_k$.  
\item Let $q\in(0,1)$ be such that 
$$\gamma_k(-q)=\gamma_k(q)=\gamma_k^c,$$
with also $\gamma_k^\prime(q^+) = \gamma_k^\prime(q^-)$.  
(The concavity of $\gamma_k$, clearly ensures that such a $q$ exists and is also such that  
\begin{equation}
\label{q1q2c}
\forall r\in[-q,q],\quad \gamma_k(r)\geq \gamma_k^c.) 
\end{equation}
Assume also, for $k\ge2$, that for all $p_1,p_2\in[-q,q]$, 
\begin{equation}
\label{gamma'}
\left| \frac{\gamma_k(p_2)-\gamma_k(p_1)}{p_2-p_1}  \right|
< \frac{(\gamma^c_k-\gamma^b_k)}{16}\,, 
\end{equation}
Since the derivative at $p_M$ exists and 
is therefore zero, 
it is always possible to determine $\gamma_k^b$ and $\gamma_k^c$ 
so that \eqref{kk_1} and \eqref{gamma'} simultaneously hold. 
For this simply keep $\gamma^b_k$ fixed and let $\gamma_k^c$ 
converge from below to $\gamma^*_k$.  When $\gamma_k^c$ 
gets close enough to $\gamma_k(0)=\gamma^*_k$, then the conditions 
are fulfilled.  

In case $k=2$, assume further that $\gamma^{II}_2$ is such that  
\begin{equation}
\label{gamma'bis}
\gamma_2^\prime(q)\leq \frac{\gamma^{II}_2-\gamma^*_2}{16}, 
\end{equation}
and that $\tilde{\gamma_2}$ is such that 
\begin{equation}
\label{gamma'bisbis}
\frac{|\gamma_2(p_2) - \gamma_2(p_1)|}{|p_2-p_1|}\leq
\frac{\gamma_2^c-\tilde{\gamma_2}}{32},   
\end{equation}
for all $p_1, p_2 \in [-q,q]$.  
Both the above conditions are satisfied from our assumptions on the derivative of $\gamma_2$ (for example, 
take $\tilde{\gamma}_2$ close but smaller than $\gamma_2^*$.  
Then, let $q \to 0$ and take $\gamma_2^c$ closer and closer to $\gamma_2^*$ 
till \eqref{gamma'bisbis} is satisfied).

\item Let $i_1$ and $i_2$ be {\it the respective integer 
rounding of each right-hand side below}:    
\begin{equation}
\label{i1}
i_1:=\frac{1-q}{1+q}\left(d-\frac{\l}{2}   \right), 
\end{equation}
and
\begin{equation}
\label{i2}
i_2:=\frac{1+q}{1-q}\left(d-\frac{\l}{2}   \right),
\end{equation}
where again $\l:=d^\beta$ (is even) and $1/2<\beta<1$ does not depend on $d$.  
Clearly, both $i_1$ and $i_2$ both depend on $d$. 
Moreover, whenever $i\in [i_1,i_2]$, then  
$$\frac{\E|LCS(X_1X_2\ldots X_{d-(\l/2)}; Y_1Y_2\ldots Y_i)|}{d^*}
= \gamma_k(d^*,r),$$
with $d^*=(d-(\l/2)+i)/2$ and $r=(i-d+\ell/2)/(i+d-\ell/2)$.  Since $r\in [0,q]$,  
if $i\ge d -\ell/2$ while $r\in[-q,0]$, if $i\le d -\ell/2$, it also follows that 
$\gamma_k(r) \geq \gamma_k^c$.
\end{itemize}

\paragraph{Let $B^d$ be the event that to find $d^{\alpha}$ times the same symbol 
in $Y$, a piece of length at least $h=\kappa d^{\alpha}$ is needed.}  \

More precisely, let $B^d(i,h)$ be the event that, in the string $Y_iY_{i+1}\ldots Y_{i+h}$, 
every letter appears at most $h/\kappa$ times. For this, 
let $r\in\{1,2,\ldots,k\}$ and 
let $W_j(r)$ be the Bernoulli random variable which is equal to one 
if $Y_j=r$ and zero otherwise.  With these notations, let 
$$B^d(i,h) := \left\{\forall r=1,2,\ldots,k: \sum_{j=i}^{i+h}W_j(r)\le \frac h{\kappa}\right\},$$ 
and let 
$$B^d:=\bigcap_{i\in[1,2d]}B^{d}(i,h).$$ 

\paragraph{Let $C^d$ be the event that for every $i\in[i_1, i_2]$, the length of the optimal 
alignment between $X_1X_2\ldots X_{d-(\l/2)}$ and $Y_1Y_2\ldots Y_{i+h}$, 
$h = \kappa d^{\alpha}$, is larger, by at least $h\gamma^a_k/2$, than the length of the optimal 
alignment between $X_1\ldots X_{d-(\l/2)}$ and $Y_1Y_2\ldots Y_{i}$.} \

More precisely,
let $C_R^d(i,h)$ be the event that by concatenating $h$ letters to the right 
of $Y_1Y_2\ldots Y_i$, the LCS with $X_1X_2\ldots X_{d-(\l/2)}$ 
increases by at least $h\gamma^a_k/2$.  Hence the event 
$C_R^d(i,h)$ holds when 
$$|LCS(X_1X_2\ldots X_{d-(\l/2)};Y_1Y_2\ldots Y_{i+h})| 
-|LCS(X_1X_2\ldots X_{d-(\l/2)};Y_1Y_2\ldots Y_i)|\geq \frac{\gamma^a_kh}2,$$ 
and  
\begin{equation}\label{defC}
C_R^d:=\bigcap_{i\in[i_1,i_2]}C_R^d(i,h),
\end{equation}
where $i_1$ and $i_2$ are defined in \eqref{i1}.  In a similar fashion, 
define $C_L^d(i,h)$ to be the event that by concatenating $h$ letters to the left 
of $Y_1Y_2\ldots Y_i$, the LCS with $X_1X_2\ldots X_{d-(\l/2)}$
increases by at least $h\gamma^a_k/2$, and then, as above, one defines $C_L^d$.  
Finally, let $C^d = C_R^d \cap C_L^d$.

\paragraph{Let $D^d$ be the event that any optimal alignment aligns 
$X_{d-(\l/2)}$ into the interval $[i_1,i_2]$.} \ 

To define $D^d$ precisely, let us first set a convention: 
when an alignment $\pi$ aligns $X_i$ with $Y_j$, then 
$X_i$ is said to be aligned with $j$ under $\pi$. 
If $\pi$ aligns $X_i$ with a gap, then let 
$i_I$ be the largest $m < i$ such that $X_m$ gets aligned with a symbol and 
not with a gap.  If $X_{i_I}$ gets 
aligned with $Y_j$, then $X_{i_I}$ is said to be aligned with $j$ under $\pi$. 

Next, let $D^d_I$ be the event that for any optimal alignment 
$\pi$ of $X$ and $Y$, if $i$ is the spot where $\pi$ 
aligns $X_{d-(\l/2)}$, then $i\in[i_1,i_2]$.  
Similarly, let $D^d_{II}$ be the event that for any optimal alignment 
$\pi$ of $X$ and $Y$, if $i$ designates the spot where $\pi$ aligns 
$X_{d-(\l/2)+\kappa d^{\alpha}}$, then $i\in[i_1,i_2]$.  Finally, let  
$$D^d:=D^d_I\cap D^d_{II}.$$

\paragraph{Let $F^d$ be the event that for every $i\in[i_1, i_2]$, the length of the optimal 
alignment between $X_1^*X_2^*\ldots X_{d-(\l/2)+d^{\beta}}^*$ and $Y_1Y_2\ldots Y_{i}$ 
is larger, by at least $d^\beta\gamma^a_k/2$, than the length of the optimal 
alignment between $X_1^*X_2^*\ldots X_{d-(\l/2)}^*$ and $Y_1Y_2\ldots Y_{i}$.}  \ 

More precisely, 
$$F^d := \bigcap_{i\in[i_1,i_2]}F^d_i,$$
where 
\begin{align*}
F^d_i:=\{|LCS(X_1^*X_2^*\ldots X_{d-(\l/2)+d^\beta}^*&;Y_1Y_2\ldots Y_i)| - \\  
&  |LCS(X_1^*X_2^*\ldots X_{d-(\l/2)}^*;Y_1Y_2\ldots Y_i)| \ge \frac{\gamma^a_kd^\beta}{2} \}.
\end{align*}

We now prove the first combinatorial lemma of this subsection:

\begin{lemma}
\label{combi1}
$$B^d\cap C^d\cap D^d\subset E^d.$$
\end{lemma}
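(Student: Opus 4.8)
The plan is to argue deterministically: I fix a sample point in $B^d\cap C^d\cap D^d$ and show that $E^d$ holds there. First I record the elementary reduction that makes $E^d$ tractable. If $\pi$ is any optimal alignment of $X$ and $Y$ and $m$ denotes the number of block symbols that $\pi$ aligns with letters (rather than gaps), then deleting the block together with those $m$ matched pairs turns $\pi$ into a valid alignment of the block-free string $X':=X_1\cdots X_{d-(\l/2)}X_{d+(\l/2)+1}\cdots X_{2d}$ with $Y$, of score $|LCS(X;Y)|-m$. Hence $|LCS(X';Y)|\ge |LCS(X;Y)|-m$, and $E^d$ follows once I show that every optimal $\pi$ has $m<d^{\alpha}$. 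So it suffices to prove that an alignment matching at least $d^{\alpha}$ block symbols with letters cannot be optimal.

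The core is a local exchange. Suppose, for contradiction, that the optimal $\pi$ matches at least $d^{\alpha}$ block symbols with letters. On $D^d_I$ the symbol $X_{d-(\l/2)}$ is aligned to some position $a\in[i_1,i_2]$, and on $D^d_{II}$ the symbol $X_{d-(\l/2)+\kappa d^{\alpha}}$ is aligned to some $b\in[i_1,i_2]$; by monotonicity the first $\kappa d^{\alpha}$ block symbols are matched inside $(a,b]$. I then build $\bar\pi$ from $\pi$ by freeing the window $Y_{a+1}\cdots Y_{a+h}$, with $h=\kappa d^{\alpha}$, from the block, re-aligning $X_1\cdots X_{d-(\l/2)}$ optimally with the longer prefix $Y_1\cdots Y_{a+h}$, and aligning the whole block with gaps while leaving the alignment to the right of the block untouched. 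Because $a\in[i_1,i_2]$, the event $C^d_R(a,h)$ (see \eqref{defC}) applies and the re-alignment of the left part gains at least $\gamma^a_k h/2=\gamma^a_k\kappa d^{\alpha}/2$. The only matched pairs destroyed are those block symbols previously aligned into the freed window $Y_{a+1}\cdots Y_{a+h}$; since that window has length $h=\kappa d^{\alpha}$, the defining inequality of $B^d(a+1,h)$ bounds the number of occurrences of the block letter there by $h/\kappa=d^{\alpha}$, so at most $d^{\alpha}$ matched pairs are lost. Combining the two estimates, the net change in score is at least
$$\frac{\gamma^a_k\kappa d^{\alpha}}{2}-d^{\alpha}=d^{\alpha}\left(\frac{\gamma^a_k\kappa}{2}-1\right)>0,$$
where the strict positivity is exactly the left inequality $2/\kappa<\gamma^a_k$ built into \eqref{kk_1}. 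Thus $\bar\pi$ strictly outscores $\pi$, contradicting optimality, and hence $m<d^{\alpha}$, which yields $E^d$. The symmetric construction using $C^d_L$ handles the mirror situation and is invoked when freeing the window on the left is more convenient.

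The step I expect to be delicate is the legitimacy of the exchange, namely that the window $Y_{a+1}\cdots Y_{a+h}$ can be reassigned to the left part without colliding with the $Y$-letters used elsewhere by $\bar\pi$. This is precisely what the localization event $D^d$ buys: with $a$ and $b$ both confined to $[i_1,i_2]$ — the interval \eqref{i1}--\eqref{i2} on which $C^d$ is valid — the portion of $X$ lying to the right of the block is pushed far enough to the right that the freed window sits strictly before it, so no $Y$-position is used twice. Carried alongside this is the bookkeeping of constants: one must check that the loss is genuinely capped at $d^{\alpha}$ (not a larger multiple) while keeping $a+h$ inside the regime controlled by $B^d$ and $C^d$, so that the surplus $d^{\alpha}(\gamma^a_k\kappa/2-1)$ survives. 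All other inputs — the existence of $a,b$, the gain, and the loss bound — are immediate from the three events, so the argument hinges on this localization-and-counting bookkeeping rather than on any new idea.
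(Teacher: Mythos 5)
Your overall strategy (exchange argument against an optimal $\pi$, gain from $C^d$, loss capped via $B^d$, localization via $D^d$) is the paper's, but your construction of $\bar\pi$ contains a genuine accounting error. You build $\bar\pi$ by \emph{aligning the whole block with gaps} while leaving the part of $\pi$ to the right of the block untouched, and you then claim that ``the only matched pairs destroyed are those block symbols previously aligned into the freed window $Y_{a+1}\cdots Y_{a+h}$.'' These two statements are incompatible. Your contradiction hypothesis is that $\pi$ matches \emph{at least} $d^{\alpha}$ block symbols with letters, say $m\ge d^{\alpha}$ of them. By $B^d$ the window $Y_{a+1}\ldots Y_{a+h}$, of length $h=\kappa d^{\alpha}$, contains at most $h/\kappa=d^{\alpha}$ occurrences of the block letter; hence whenever $m>d^{\alpha}$, some block matches necessarily lie \emph{beyond} position $a+h$, and your $\bar\pi$ destroys those as well, without re-using the corresponding $Y$-positions (the left part of $\bar\pi$ only uses $Y_1\ldots Y_{a+h}$ and the right part is untouched). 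The true loss is therefore $m$, not at most $d^{\alpha}$, and the net change is only bounded below by $\kappa d^{\alpha}\gamma^a_k/2-m$, which is not positive when $m$ is large (nothing at this stage prevents $m$ from being of order $d^{\beta}$). Precisely the alignments that the lemma must rule out — those matching many block symbols with letters — are the ones for which your exchange produces no contradiction.

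The paper avoids this by \emph{not} gapping the whole block. Since the block is constant, one may rearrange which block symbols carry the matches at no cost in score (so optimality, and hence $D^d$, still applies) and assume the matched block symbols are the initial ones. One then unmatches only the \emph{first} $d^{\alpha}$ matched block symbols, whose $Y$-partners span a segment $Y_{i+1}\ldots Y_{i+t}$ with $t\ge\kappa d^{\alpha}$ by $B^d$ and with $i\in[i_1,i_2]$ by $D^d$ (this index control is what $D^d$, in particular $D^d_{II}$, actually buys — not the no-collision property you attribute to it). All remaining block matches and the right part of $\pi$ are kept; they sit beyond position $i+t$, so re-aligning $X_1\ldots X_{d-(\l/2)}$ optimally with $Y_1\ldots Y_{i+t}$ collides with nothing. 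The loss is then exactly $d^{\alpha}$, the gain is at least $\kappa d^{\alpha}\gamma^a_k/2$ by $C^d$ and monotonicity of the LCS in $t\ge h$, and \eqref{kk_1} gives the strict positivity. A repair within your own set-up is also possible: unmatch only the block symbols matched inside $(a,a+h]$ and keep the others; then $B^d$ caps the loss at $d^{\alpha}$, and the hypothesis $m\ge d^{\alpha}$ together with $B^d$ pushes the surviving block matches (and hence the right part of $\pi$) past $a+h$, so the exchange is legitimate. As written, however, your proof has a real gap.
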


\begin{proof} The proof is by contradiction and so assume that $E^d$ does not hold. 
Then, there is
an optimal alignment $\pi$ for which there are at least $d^{\alpha}$ 
letters, from the long constant block, which are not aligned with gaps.  
Moreover, without loss of generality, assume that these 
letters are at the beginning of the block.  
However, when the event $B^d$ holds true, the first $d^{\alpha}$ letters 
from the long block are aligned with a portion of $Y_1Y_2\ldots Y_{2d}$ 
of length at least $\kappa d^{\alpha}$.  In other words, there exists 
$i\in[1,2d]$ such that the optimal alignment $\pi$ 
aligns the first $d^{\alpha}$ letters from the long block with 
$Y_{i+1}\ldots Y_{i+t}$, where $t\geq \kappa d^{\alpha}$.  Therefore, the optimal alignment 
$\pi$ only aligns $Y_{i+1}\ldots Y_{i+t}$ with those first $d^{\alpha}$ 
letters from the long block.  Next, since $D^d$ holds true, 
assume that $i\in [i_1,i_2]$.   Now, modify the alignment $\pi$ so as 
to no longer align these $d^{\alpha}$ letters from the long block 
with $Y_{i+1}\ldots Y_{i+t}$.   In doing so, $d^{\alpha}$ aligned letters are lost.  
In turn, $Y_{i+1}\ldots Y_{i+t}$ can be realigned with a part of $X$ 
outside the long block.  In other words, align now $Y_1Y_2\ldots Y_{i+t}$ 
entirely with $X_1X_2\ldots X_{d-(\l/2)}$.   But, the event $C^d$ guarantees, 
since $t\geq \kappa d^{\alpha}$, a gain of at least $\kappa d^{\alpha}\gamma_k^a/2$. 
Summing up the 
losses and the gains, obtained in modifying $\pi$, lead to an increase 
of at least
\begin{equation}
\label{increasea}
\frac{\kappa d^{\alpha}\gamma_k^a}{2}
-d^{\alpha}=d^{\alpha}\left(\frac{\kappa\gamma_k^a}{2}-1  \right) > 0, 
\end{equation}
by \eqref{kk_1}.  Therefore, 
$\pi$ is not optimal which is a contradiction. 
\end{proof}

\

Let us now state and prove a second combinatorial lemma recalling that $X^*$ 
denotes the string obtained from $X$ by replacing the long constant block by iid symbols.

\begin{lemma}
\label{combiO}
$$B^d\cap C^d\cap D^d\cap F^d\subset K^d.$$
\end{lemma}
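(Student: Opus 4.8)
The plan is to work throughout on the event $B^d\cap C^d\cap D^d\cap F^d$ and to lower bound $|LCS(X^*;Y)|$ by exhibiting a single modified alignment, split at the same place where an optimal alignment of $X$ and $Y$ separates the left part from the block. First I would invoke Lemma~\ref{combi1}: on $B^d\cap C^d\cap D^d$ the event $E^d$ holds, and, more to the point, the exchange argument in its proof shows the slightly stronger fact that \emph{no} optimal alignment of $X$ and $Y$ can align $d^\alpha$ or more symbols of the long constant block with letters (any such alignment can be strictly improved). So I would fix any optimal alignment $\pi$ of $X$ and $Y$ and let $M<d^\alpha$ denote the number of block symbols that $\pi$ aligns with letters.

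Then I would use $D^d_I$ to locate the cut. By $D^d_I$, the alignment $\pi$ aligns $X_{d-(\l/2)}$ (with the gap-convention preceding the definition of $D^d$ when this symbol lands on a gap) to some position $i\in[i_1,i_2]$. By monotonicity of $\pi$ this splits the score additively as $|LCS(X;Y)|=L+M+R$, where $L$ collects the pairs with $X$-index at most $d-(\l/2)$ (all having $Y$-index at most $i$) and $R$ collects the pairs inside the right part $X_{d+(\l/2)+1}\ldots X_{2d}$ (all having $Y$-index larger than $i$). In particular $L\le|LCS(X_1\ldots X_{d-(\l/2)};Y_1\ldots Y_i)|$ and $R\le|LCS(X_{d+(\l/2)+1}\ldots X_{2d};Y_{i+1}\ldots Y_{2d})|$.

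Next I would build an alignment of $X^*$ with $Y$ by cutting $Y$ at the very same index $i$. Since $X^*$ coincides with $X$ outside the block, align $X_1^*\ldots X_{d-(\l/2)+d^\beta}^*$ (the left part together with the resampled iid middle) with $Y_1\ldots Y_i$ optimally, and align the right part $X_{d+(\l/2)+1}\ldots X_{2d}$ with $Y_{i+1}\ldots Y_{2d}$ optimally; concatenating these two gives a legitimate alignment, so
$$|LCS(X^*;Y)|\ge |LCS(X_1^*\ldots X_{d-(\l/2)+d^\beta}^*;Y_1\ldots Y_i)|+|LCS(X_{d+(\l/2)+1}\ldots X_{2d};Y_{i+1}\ldots Y_{2d})|.$$
Because $i\in[i_1,i_2]$, the event $F^d_i$ applies to the first term and bounds it below by $|LCS(X_1\ldots X_{d-(\l/2)};Y_1\ldots Y_i)|+\gamma_k^a d^\beta/2\ge L+\gamma_k^a d^\beta/2$, while the second term is at least $R$. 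Hence $|LCS(X^*;Y)|\ge L+R+\gamma_k^a d^\beta/2=|LCS(X;Y)|-M+\gamma_k^a d^\beta/2$, and since $M<d^\alpha$ this gives $|LCS(X^*;Y)|-|LCS(X;Y)|>\gamma_k^a d^\beta/2-d^\alpha$, which yields $K^d$.

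The step I expect to be the main obstacle is the very first one. The formal inequality defining $E^d$ only bounds from above the drop in LCS caused by deleting the block, and by itself it does \emph{not} control the number of block symbols that a prescribed optimal alignment places on letters; the bound $M<d^\alpha$ that I need per-alignment must be extracted from the improvement argument of Lemma~\ref{combi1}, reapplied to the optimal alignment I selected. The only other delicate bookkeeping is the additive split at $i$: I would be careful that the $D^d$ gap-convention really forces the left pairs to stay at $Y$-indices $\le i$ and the right-part pairs at $Y$-indices $>i$, so that the decomposition $|LCS(X;Y)|=L+M+R$ is exact. Once that split is clean, the remainder is just monotone concatenation of alignments together with a direct application of $F^d_i$.
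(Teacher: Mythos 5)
Your proposal is correct and follows essentially the same route as the paper's proof: fix an optimal alignment $\pi$ of $X$ and $Y$ (using, as you note, the per-alignment consequence of the exchange argument in Lemma~\ref{combi1} rather than the bare event $E^d$ -- the paper invokes this implicitly in the same way), locate the cut $i\in[i_1,i_2]$ via $D^d$, realign $X_1^*\ldots X_{d+(\l/2)}^*$ optimally with $Y_1\ldots Y_i$ to gain at least $\gamma_k^a d^\beta/2$ by $F^d_i$, keep $\pi$ on the right part, and absorb the loss of at most $d^\alpha$ block matches. Your explicit $L+M+R$ bookkeeping is just a cleaner writing of the paper's transformation of $\pi$ into $\bar\pi$.
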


\begin{proof} By the previous lemma, when the events $B^d$, $C^d$ and $D^d$ hold true, 
any optimal alignment aligns at most $d^{\alpha}$ letters, from the long block, with 
letters. Let $\pi$ be an optimal alignment of $X$ and $Y$, then 
$\pi$ aligns at least $d^\beta-d^{\alpha}$ symbols from the long block 
with gaps.  Assume that $X_{d-(\l/2)}$ gets aligned with $Y_i$ by $\pi$.  
Now, transform $\pi$ into a new alignment $\bar\pi$ aligning $X^*$ and $Y$ in the following manner:  
Instead of aligning $X_1^*X_2^*\ldots X_{d-(\l/2)}^*$ 
with $Y_1Y_2\ldots Y_i$, concatenate $X^*_{d-(\l/2)+1}X^*_{d-(\l/2)+2}\ldots X^*_{d+(\l/2)}$ 
to the $X$-part, and align $Y_1Y_2\ldots Y_i$ with 
$X_1^*X_2^*\ldots X_{d+(\l/2)}^*$ in an optimal way, i.e., in such a way that any chosen 
alignment corresponds to a LCS of $Y_1Y_2\ldots Y_i$ and
$X_1^*X_2^*\ldots X_{d+(\l/2)}^*$.  Next, for the remaining letters of the strings $X^*$ 
and $Y$, use the alignment $\pi$.  Hence, if $m\in[i+1,2d]$ and 
$n\in[d+(\l/2)+1,2d]$ and if $\pi$ aligns $X_n$ with $Y_m$, 
then $\bar\pi$ aligns $X_n^*$ with $Y_m$.  Since $D^d$ holds, $i\in[i_1,i_2]$ and since 
$F^d$ holds, concatenating 
$X^*_{d-(\l/2)+1}X^*_{d-(\l/2)+2}\ldots X^*_{d+(\l/2)}$ 
to the $X$-part leads to a score-increase of at least $(\gamma_k^a/2)d^\beta$.  
But the transformation of $\pi$ into $\bar\pi$ could decrease the score.  Indeed, 
up to $d^{\alpha}$ letters, from the long block, could under $\pi$  
have not been aligned with gaps  (and thus could have been aligned with letters).  
Therefore, replacing the long block 
by $X^*_{d-(\l/2)+1}X^*_{d-(\l/2)+2}\ldots X^*_{d+(\l/2)}$ might lead to 
a loss not exceeding $d^{\alpha}$ aligned letter-pairs.  Summing up the 
losses and the gains, obtained in modifying $\pi$, lead to an increase of at least 
$$\frac{\gamma_k^a}2 \; d^\beta-d^{\alpha}.$$
This proves that the event $K^d$ holds and finishes this proof. 
\end{proof}

\

Let us now show that $B^d$, $C^d$, $D^d$ and $F^d$ 
all occur with high probability.  For $C^d$, let us start with the following:

\begin{lemma}
\label{difficult}
Let $\l:=d^\beta$.  For all $d$ large enough 
and all $i\in[i_1,i_2]$,
\begin{equation}
\label{delta}
\E\!\left(|LCS(X_1^*X_2^*\ldots X^*_{d-(\l/2)};Y_1Y_2\ldots Y_{i+h})|
-|LCS(X_1^*X_2^*\ldots X^*_{d-(\l/2)};Y_1Y_2\ldots Y_i)|\right)\!\ge 
\frac{h\gamma^b_k}2, 
\end{equation}
where $h=\kappa d^{\alpha}$. 
\end{lemma}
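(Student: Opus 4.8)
The plan is to express the left-hand side of \eqref{delta} exactly in terms of the mean LCS-function $\gamma_k(\cdot,\cdot)$ and then apply Alexander's uniform convergence estimate \eqref{alexander} to replace the finite-$n$ quantities by the limiting function $\gamma_k$. Since $X_1^*X_2^*\ldots X^*_{d-(\l/2)}$ is an iid string of length $d-(\l/2)$ and $Y_1Y_2\ldots Y_i$ is an independent iid string of length $i$, the definition \eqref{gammaknp} gives, for the pair of lengths $(d-\l/2,\,i)$, that $\E|LCS(\cdots;Y_1\ldots Y_i)|=\gamma_k(d^*,r)\,d^*$ where $d^*=(d-(\l/2)+i)/2$ and $r=(i-d+\l/2)/(i+d-\l/2)$, exactly as recorded in the display defining $i_1,i_2$. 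Appending $h=\kappa d^\alpha$ letters to $Y$ changes the length-pair to $(d-\l/2,\,i+h)$, with a new average length $d^*+h/2$ and a new proportion $r'$. So the whole expectation on the left of \eqref{delta} equals $\gamma_k(d^*+h/2,\,r')(d^*+h/2)-\gamma_k(d^*,r)\,d^*$.

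First I would carry out the bookkeeping computation that was already rehearsed in Part I of Section~\ref{mainideas}: I would compute the shift $r'-r$ explicitly (it is $O(d^\alpha/d)=o(1)$ since $\alpha<1$) and rewrite the difference in the telescoping form $\delta(d^*\gamma_k)\approx d^*\,\delta(\gamma_k)+\gamma_k\,\delta(d^*)$. Concretely, I would split the difference into a ``$\gamma_k(r)$ times the length-increase $h/2$'' term plus a ``change-in-$\gamma_k$ times the new length'' term, paralleling \eqref{alexander2}. Applying \eqref{alexander} to both finite-$n$ values costs only $O(\sqrt{d\ln d})=o(d^\alpha)$ because $\alpha>1/2$, so this error is negligible against the target $h\gamma_k^b/2=\Theta(d^\alpha)$. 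The leading contribution is then $\gamma_k(r)\cdot(h/2)$ plus a correction controlled by a difference quotient of $\gamma_k$ over the interval between $r$ and $r'$, both of which lie in $[-q,q]$.

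The main obstacle, and the step I expect to require the most care, is bounding that correction term so that the final bound lands at $\gamma_k^b$ rather than merely at something close to $\gamma_k^c$. Here is where conditions \eqref{q1q2c} and \eqref{gamma'} do the work: since $i\in[i_1,i_2]$ forces $r\in[-q,q]$, the bound \eqref{q1q2c} gives $\gamma_k(r)\ge\gamma_k^c$, which already exceeds $\gamma_k^b$. The correction term has the form of a difference quotient $(\gamma_k(r')-\gamma_k(r))/(r'-r)$ multiplied by a length of order $d^\alpha$; by \eqref{gamma'} this quotient is bounded in absolute value by $(\gamma_k^c-\gamma_k^b)/16$, so even after multiplying by the relevant length factor the correction cannot erode the gap between $\gamma_k^c$ and $\gamma_k^b$. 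The delicate part is verifying that the algebraic length factors multiplying the difference quotient are genuinely $\le h/2$ up to lower-order terms, so that the worst-case loss from the correction is strictly smaller than $h(\gamma_k^c-\gamma_k^b)/2$.

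Putting these together, for $d$ large the whole expression is at least $\gamma_k^c\cdot(h/2)$ minus a correction bounded by $h(\gamma_k^c-\gamma_k^b)/2\cdot(1+o(1))$ minus the Alexander error $o(d^\alpha)$, which is at least $\gamma_k^b\cdot(h/2)$ once $d$ is large enough to absorb the $o(d^\alpha)$ slack; this is exactly \eqref{delta}. Throughout I would keep the inequalities \eqref{kk_1} in hand so that the chain $\gamma_k^b<\gamma_k^c<\gamma_k^*$ guarantees strict separation. I would also note that the argument is symmetric in appending letters to the left versus the right of $Y$, so the same bound covers the analogue needed for $C_L^d$, which is why the lemma is stated for appending $h$ letters without distinguishing the side.
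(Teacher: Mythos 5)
Your route is the same as the paper's: express both expectations through $\gamma_k(\cdot,\cdot)$, pass to the limit function at a cost of $O(\sqrt{d\ln d})$ via \eqref{alexander}, split the difference into the main term $\tfrac{h}{2}\gamma_k(p_1)$ plus a correction $d_2\bigl(\gamma_k(p_2)-\gamma_k(p_1)\bigr)$, and control the correction with the difference-quotient bound \eqref{gamma'} together with $\gamma_k(p_1)\ge\gamma_k^c$ from \eqref{q1q2c}. However, two steps do not hold up as written.

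First, your final accounting does not close. You allow the correction to be as large as $\tfrac{h(\gamma_k^c-\gamma_k^b)}{2}(1+o(1))$, i.e., essentially the \emph{entire} gap between the main term $\tfrac{h}{2}\gamma_k^c$ and the target $\tfrac{h}{2}\gamma_k^b$. With that bound the chain of inequalities only yields
$\E\Delta\ge \tfrac{h}{2}\gamma_k^b-o(d^{\alpha})$,
which is strictly weaker than \eqref{delta}: nothing is left over to absorb the Alexander error, which is $o(d^\alpha)$ but not zero. What is needed (and what the paper arranges) is a correction bound equal to a \emph{definite fraction} of the gap: from $d_2\le 2d$, $\delta p\le 2h/d$ and \eqref{gamma'}, the correction is at most
$2d\cdot\frac{2h}{d}\cdot\frac{\gamma_k^c-\gamma_k^b}{16}=\frac{h(\gamma_k^c-\gamma_k^b)}{4}$,
leaving a surplus $\frac{h(\gamma_k^c-\gamma_k^b)}{4}=\Theta(d^{\alpha})$ that dominates $O(\sqrt{d\ln d})$ precisely because $\alpha>1/2$. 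Your own intermediate estimates (quotient at most $(\gamma_k^c-\gamma_k^b)/16$, length factor of order $h$) in fact deliver this; the flaw is in the final synthesis, where you gave away the whole margin.

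Second, your assertion that both proportions (your $r$ and $r'$, the paper's $p_1$ and $p_2$) lie in $[-q,q]$ is false near the right edge of the range: $i\in[i_1,i_2]$ only guarantees $p_1\in[-q,q]$, and when $i$ is within $h$ of $i_2$ the shifted proportion $p_2$ can exceed $q$, so \eqref{gamma'} does not apply to the pair. This is exactly the boundary case the paper treats separately: since $\delta p\le 2h/d\to 0$ and since $q$ was chosen so that $\gamma_k^\prime(q^+)=\gamma_k^\prime(q^-)$ (this is where the differentiability hypothesis of Theorem~\ref{maintheoremsubsection} genuinely enters the proof), the quotient bound \eqref{aaaa} persists by continuity for $d$ large. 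Without this step your argument silently uses more than the hypotheses provide, and the lemma's reliance on differentiability at $p_M$ disappears from view.
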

\begin{proof}Let 
$$\Delta := |LCS(X_1^*X_2^*\ldots X^*_{d-(\l/2)};Y_1Y_2\ldots Y_{i+h})|
-|LCS(X_1^*X_2^*\ldots X^*_{d-(\l/2)};Y_1Y_2\ldots Y_i)|,$$
and assume at first that $i\in [i_1, d]$ (see \eqref{i1}).  
By definition, 
$$\E\Delta=d_2\gamma_k(d_2,p_2)-d_1\gamma_k(d_1,p_1),$$
where
\begin{align*}
d_2:=&\frac12 \left(i+d-\frac\l 2+h\right)\!,  \, \quad p_2:=\frac{i-d+(\l/2)+h}{i+d-(\l/2)+h},\\
d_1:=&\frac12 \left(i+d-\frac\l 2\right), \quad \quad \quad p_1:=\frac{i-d+(\l/2)}{i+d-(\l/2)}.
\end{align*}
From Alexander~\cite{Alexander2} (see also \eqref{alexander}), there exists a constant $C_\gamma>0$ 
(independent of $d$ and $p$) such that 
\begin{equation}
\label{sqrt(d)}
|\gamma_k(d,p)-\gamma_k(p)|\leq C_\gamma\sqrt{\frac{\ln d}{d}}\,,
\end{equation}
for all $p\in (-1,1)$ and all $d\ge 1$.  
Using \eqref{sqrt(d)} and since 
$d_1,d_2\leq 2d$, 
\begin{equation}
\label{Deltad2}
\E\Delta\geq d_2\gamma_k(p_2)-d_1\gamma_k(p_1)-
2C_\gamma\sqrt{2d \ln 2d}. 
\end{equation}
Now, 
\begin{equation}
\label{Deltad3}
d_2\gamma_k(p_2)-d_1\gamma_k(p_1)
=\frac{h\gamma_k(p_1)}2+d_2{\delta\gamma_k},  
\end{equation}
where 
$$\delta\gamma_k=\gamma_k(p_2)-\gamma_k(p_1).$$  
By the concavity and the symmetry of $\gamma_k$, if $p_2\leq 0$, and since $p_1 < p_2$, then 
$\delta\gamma_k\geq 0$ so that 
\begin{equation}
\label{I}
d_2\gamma_k(p_2)-d_1\gamma_k(p_1)\geq \frac{h\gamma_k(p_1)}2.
\end{equation}
If $p_2\geq 0$, then 
\begin{equation}
\label{delta_p}
\delta p := p_2 -p_1 =
\frac{2h(d-(\l/2))}{(i+d-(\l/2))(i+d-(\l/2)+h)}\le 2\frac{h}{d},
\end{equation}
for $d$ large enough, e.g., $i\ge \l/2$, i.e., $(1-q)d \ge \l$.  
Since $i\in[i_1,i_2]$ and $i+\kappa d^{\alpha}\in[i_1,i_2]$, then 
$p_1,p_2\in [-q,q]$ and \eqref{gamma'} lead to 
\begin{equation}
\label{aaaa}
\frac{|\delta\gamma_k|}{\delta p} < 
\frac{(\gamma^c_k-\gamma^b_k)}{16}.
\end{equation}
Combining \eqref{aaaa} with \eqref{delta_p} and since $d_2\leq 2d$, 
\begin{equation}
\label{additional}
\frac{h\gamma_k(p_1)}2+d_2\,\frac{|\delta\gamma_k|}{\delta p}\,\delta p 
\ge h\left(\frac{\gamma_k(p_1)}2-\frac{(\gamma^c_k-\gamma^b_k)}{4}\right).  
\end{equation}
Now, by the very definition of $i_1,i_2$, 
$\gamma_k(p_1)\geq \gamma_k^c$, which yields 
\begin{equation}\label{eq.xx}
h\left(\frac{\gamma_k(p_1)}2-\frac{(\gamma^c_k-\gamma^b_k)}{4}\right)
\ge 
h\left(\frac{\gamma_k^c}2-\frac{(\gamma^c_k-\gamma^b_k)}{4}\right)= h\left(\frac{\gamma^b_k}{2}+ 
\frac{(\gamma^c_k-\gamma^b_k)}{4}\right).\end{equation}
Next, \eqref{eq.xx} together with \eqref{additional}, 
\eqref{Deltad2} and \eqref{Deltad3} lead to:   
\begin{equation}
\label{Deltad4}
\E\Delta\geq \kappa d^{\alpha}\frac{\gamma^b_k}{2}+\kappa d^\alpha
\left(\frac{\gamma_k^c-\gamma_k^b}{4}\right)-2C_\gamma\sqrt{2d \ln 2d}.
\end{equation}
Finally, since $\alpha>1/2$ is independent of 
$d$, and since 
$\gamma_k^c - \gamma_k^b>0$, $2C_\gamma\sqrt{2d \ln 2d}$ becomes ``negligible" when compared 
to $\kappa d^{\alpha}(\gamma_k^c-\gamma_k^b)/4$.  So, for large enough 
$d$, \eqref{Deltad4} implies that 
$\E\Delta\geq \kappa d^{\alpha}{\gamma^b_k}/{2}$, 
which is what we intended to prove, at least for $i\in [i_1, d]$.  
As shown next, for $i\in [d, i_2]$  
and $d$ large enough, the inequality \eqref{delta} remains valid.  Indeed, 
at first, one only needs $i\in [i_1, d]$ instead of $i\in[i_1,i_2]$, to obtain \eqref{aaaa}; 
more specifically one needs $i+\kappa d^{\alpha}\in[i_1,i_2]$.  
However, \eqref{gamma'} is a strict inequality and so, for 
$d$ large enough, even if $i+\kappa d^{\alpha} \notin [i_1,i_2]$ but 
as long as $i\in[i_i,i_2]$, by continuity ($p_2-p_1 \le 2h/d$) and since 
$\gamma_k^\prime(q^+) = \gamma_k^\prime(q^-)$, 
the inequality \eqref{aaaa} still holds.  
This then implies \eqref{delta}.\end{proof} 

\

The next lemma shows that the event $C^d$ occurs with high probability:

\begin{lemma}
\label{lemmaC}
For $d$ large enough, 
$$\P(C^d)\geq 1- 2d\exp
\left(-\frac{d^{2\alpha-1}\kappa^2(\gamma^a_k-\gamma^b_k)^2}{18}\right).$$ 
\end{lemma}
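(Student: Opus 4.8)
The plan is to deduce Lemma~\ref{lemmaC} from the expectation estimate of Lemma~\ref{difficult} by a concentration-of-measure argument followed by a union bound, since all the genuine difficulty has already been absorbed into Lemma~\ref{difficult}. Recall that $C^d=C_R^d\cap C_L^d$ and that each of $C_R^d$, $C_L^d$ is an intersection, over $i\in[i_1,i_2]$, of the single-index events $C_R^d(i,h)$, $C_L^d(i,h)$, with $h=\kappa d^{\alpha}$. It therefore suffices to bound the probability of the complement of one such single-index event and then to sum over $i$ and over the two sides.

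Fix $i\in[i_1,i_2]$ and set
$$\Delta:=|LCS(X_1\ldots X_{d-(\l/2)};Y_1\ldots Y_{i+h})|-|LCS(X_1\ldots X_{d-(\l/2)};Y_1\ldots Y_i)|,$$
so that $C_R^d(i,h)=\{\Delta\ge h\gamma^a_k/2\}$. Since the segment $X_1\ldots X_{d-(\l/2)}$ lies entirely to the left of the long block it is genuinely iid, so $X$ and $X^*$ coincide there and Lemma~\ref{difficult} applies to give $\E\Delta\ge h\gamma^b_k/2$. As $\gamma^a_k<\gamma^b_k$ by \eqref{kk_1}, the complement of $C_R^d(i,h)$ forces a downward deviation of $\Delta$ from its mean:
$$\Big\{\Delta< \tfrac{h\gamma^a_k}{2}\Big\}\subseteq\Big\{\E\Delta-\Delta> \tfrac{h}{2}(\gamma^b_k-\gamma^a_k)\Big\}.$$

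Now I would invoke the same bounded-differences (Hoeffding martingale) inequality used in \eqref{inequ}. The quantity $\Delta$ is a function of the iid letters $X_1,\ldots,X_{d-(\l/2)}$ and $Y_1,\ldots,Y_{i+h}$; changing any one of them alters each of the two LCS lengths by at most one and hence alters $\Delta$ by at most two. The number $u$ of such letters is of order $d$ (it is at most $(d-\l/2)+i_2+h$, which by \eqref{i1}--\eqref{i2} is bounded by a constant multiple of $d$ once $q$ is small), so the one-sided form of the inequality yields
$$\P\Big(\E\Delta-\Delta>\tfrac{h}{2}(\gamma^b_k-\gamma^a_k)\Big)\le \exp\!\left(-\frac{c\,h^2}{u}\right)$$
for a suitable constant $c$. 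Because $h=\kappa d^{\alpha}$ and $u=\Theta(d)$, the exponent is of order $h^2/d=\kappa^2 d^{2\alpha-1}$, which is exactly the shape $\exp(-Cd^{2\alpha-1})$ appearing in the statement; tracking the constants — the single-sided form together with a bound of the type $u\le \mathrm{const}\cdot d$, valid for $q$ small — produces the displayed denominator $18$.

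Finally I would take a union bound. The number of indices $i\in[i_1,i_2]$ is $i_2-i_1+1$, which by \eqref{i1}--\eqref{i2} equals $\frac{4q}{1-q^2}(d-\l/2)+O(1)\le d$ for $q$ small; doubling to account for both $C_R^d$ and $C_L^d$ gives the prefactor $2d$. The left-concatenation events $C_L^d(i,h)$ are handled identically: Lemma~\ref{difficult} holds verbatim after relabeling, since the iid structure of $Y$ makes prepending $h$ letters stochastically equivalent to appending them, so the same expectation and concentration bounds apply. The only points requiring care — and the closest thing here to an obstacle — are bookkeeping ones: pinning down the per-coordinate Lipschitz constant and the count $u$ tightly enough to land the explicit constant $1/18$, and confirming that the left case inherits the expectation bound of Lemma~\ref{difficult}. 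The substantive estimate is entirely contained in that lemma.
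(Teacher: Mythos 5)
Your proposal is correct and takes essentially the same route as the paper: the paper likewise reduces to the single-index events, uses Lemma~\ref{difficult} to turn the complement of $C_R^d(i,h)$ into a downward deviation $\Delta-\E\Delta\le h(\gamma^a_k-\gamma^b_k)/2$, applies Hoeffding's bounded-differences inequality to the $O(d)$ iid coordinates on which $\Delta$ depends, and finishes with a union bound over at most $d$ indices and the two sides to get the prefactor $2d$ (handling $C_L^d$ by the same symmetry you invoke). The paper is no more precise than you are about the constant $18$ -- its own computation via $d^*\le 3d$ actually yields a stronger exponent, which it then relaxes -- so your bookkeeping caveat is harmless.
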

\begin{proof} Let $i$ be a positive integer, let $h=\kappa d^{\alpha}$ and let 
$$\Delta:=|LCS(X_1^*X_2^*\ldots X^*_{d-(\l/2)};Y_1Y_2\ldots Y_{i+h})|
-|LCS(X_1^*X_2^*\ldots X^*_{d-(\l/2)};Y_1Y_2\ldots Y_i)|.$$
Recalling the very definition of the event $C_R^d(i,h)$ (see \eqref{defC}), note 
that when its complementary $(C_R^{d}(i,h))^c$ holds, then 
\begin{equation}
\label{Deltagamma}
\Delta\leq \frac{h\gamma^a_k}2. 
\end{equation}
Now, $\Delta$ is function of the iid entries 
$X^*_1,X^*_2,\ldots,X^*_{d-(\l/2)}$ and $Y_1,Y_2,\ldots,Y_{i+h}$ 
and changing one of them changes $\Delta$ 
by at most $2$.  Assuming that $i\in[i_1,d]$, Lemma~\ref{difficult} applies and 
$\E\Delta\geq \gamma_k^b h/2$.
Together with \eqref{Deltagamma}, this leads to 
\begin{equation}
\label{Deltagamma3}
\Delta-\E\Delta\leq 
\frac{h\gamma^a_k}2-\frac{h\gamma^b_k}2=\frac{h(\gamma^a_k-\gamma^b_k)}2.
\end{equation}
In other words, the event $(C_R^{d}(i,h))^c$ implies that 
the inequality \eqref{Deltagamma3} holds true.  Hence, 
\begin{equation}
\label{Deltagamma4}
1-\P(C_R^{d}(i,h))\leq
\P\left(\Delta-\E\Delta\leq d^*
\left(\frac h{d^*}\right)\frac{(\gamma^a_k-\gamma^b_k)}2\right)
\end{equation}
where $d^*:=d-(\l/2)+i+h$.  
By assumption, $\gamma^a_k-\gamma^b_k<0$ and therefore by Hoeffding's inequality, the right-hand 
side of \eqref{Deltagamma4} is upper bounded by 
\begin{equation}
\label{exp}\exp\left(-d^*\left(\frac h{d^*}\right)^2
\frac{(\gamma^a_k-\gamma^b_k)^2}2\right).
\end{equation}
Since $d^*\leq 3d$ and since, for $d$ large enough $d^*\geq d$, 
\eqref{Deltagamma4} becomes 
$$1-\P(C_R^{d}(i,h))\leq 
\exp\left(-d^{2\alpha-1}\kappa^2\frac{(\gamma^a_k-\gamma^b_k)^2}{18}\right),$$
with our choice of $h=\kappa d^\alpha$, $1/2<\alpha$.  
Hence, since the interval $[i_1,d]$ contains at most $d$ elements, 
$$1-\P(C_R^{d})\leq d\exp
\left(-d^{2\alpha-1}\kappa^2\frac{(\gamma^a_k-\gamma^b_k)^2}{18}\right).$$  
A symmetric argument leads to the same bound for 
$C_L^d$ and, since $C^d = C^d_R\cap C^d_L$, 
$$1-\P(C^{d})\leq 2d  \exp
\left(-d^{2\alpha-1}\kappa^2\frac{(\gamma^a_k-\gamma^b_k)^2}{18}\right).$$\end{proof}

Next, the event $B^d$ is shown to hold with high probability.  

\begin{lemma}
\label{lemmaB}
For $d$ large enough,
$$\P(B^d)\geq 1-2dk\exp\left(-2h\left(\frac{1}{\kappa}-
\frac{1}{k}\right)^2\right),$$
where $h=\kappa d^{\alpha}$.
\end{lemma}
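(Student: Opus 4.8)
The plan is to control the complement of $B^d$ by a union bound, thereby reducing the whole statement to a single upper-tail estimate that Hoeffding's inequality settles at once. Since $B^d=\bigcap_{i\in[1,2d]}B^d(i,h)$ and $(B^d(i,h))^c=\{\exists\,r:\ \sum_{j=i}^{i+h}W_j(r)>h/\kappa\}$, I would first write
$$1-\P(B^d)\le\sum_{i\in[1,2d]}\P\bigl((B^d(i,h))^c\bigr)\le\sum_{i\in[1,2d]}\sum_{r=1}^{k}\P\!\left(\sum_{j=i}^{i+h}W_j(r)>\frac{h}{\kappa}\right),$$
so that it suffices to bound each of the at most $2dk$ summands by $\exp(-2h(1/\kappa-1/k)^2)$.

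For a fixed window position $i$ and a fixed symbol $r$, the quantity $S:=\sum_{j=i}^{i+h}W_j(r)$ is a sum of independent Bernoulli$(1/k)$ variables, each supported in $[0,1]$, so $\E S=h/k$ (up to the single boundary summand discussed below). Because \eqref{kk_1} forces $1/\kappa>1/k$, the threshold $h/\kappa$ lies strictly above the mean, and $\{S>h/\kappa\}$ is a genuine upper-deviation event with excess $t:=h/\kappa-\E S=h(1/\kappa-1/k)>0$. Applying Hoeffding's inequality to the $h$ range-one summands then gives
$$\P\!\left(S>\frac{h}{\kappa}\right)=\P(S-\E S>t)\le\exp\!\left(-\frac{2t^{2}}{h}\right)=\exp\!\left(-2h\Bigl(\frac{1}{\kappa}-\frac{1}{k}\Bigr)^{2}\right),$$
which is exactly the per-term bound sought: the constant $2(1/\kappa-1/k)^2$ is simply the Hoeffding exponent attached to one increment of the window.

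Finally I would reassemble the pieces. There are at most $2d$ admissible positions $i$ (those with $i+h\le 2d$) and exactly $k$ symbols, so summing the uniform per-term bound over the $2dk$ pairs $(i,r)$ yields
$$1-\P(B^d)\le 2dk\,\exp\!\left(-2h\Bigl(\frac{1}{\kappa}-\frac{1}{k}\Bigr)^{2}\right),$$
which is the assertion. Structurally this is the easiest of the four high-probability estimates, and I do not expect a genuine obstacle: it is a plain union bound layered on top of a one-line Hoeffding estimate. The hypothesis ``for $d$ large enough'' enters only for routine bookkeeping—ensuring that the window $Y_iY_{i+1}\ldots Y_{i+h}$ fits inside $Y_1\ldots Y_{2d}$, so that $i$ effectively ranges over at most $2d$ positions, and treating the window as $h$ i.i.d.\ increments of mean $1/k$, with the lone boundary summand and the exact value of the mean perturbing the exponent only by lower-order amounts that are immaterial since $h=\kappa d^{\alpha}\to\infty$.
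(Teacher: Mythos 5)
Your proof is correct and follows essentially the same route as the paper's: a union bound over the at most $2dk$ pairs of window positions and symbols, followed by a one-sided Hoeffding estimate using that \eqref{kk_1} places the threshold $h/\kappa$ strictly above the mean $h/k$. The boundary-summand point you flag is an artifact of the paper itself (its definition of $B^d(i,h)$ sums $h+1$ terms while its proof works with exactly $h$ terms, $\sum_{j=i+1}^{i+h}W_j$), so your treatment matches the intended statement.
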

\begin{proof}
Let $B_r^d(i,h)$ be the event that $r\in\{1,\ldots,k\}$ 
appears at most $h/\kappa$ times in the string 
$Y_iY_{i+1}\ldots Y_{i+h}$.  Hence, 
$$B^d(i,h)=\bigcap_{r\in\{1,\ldots,k\}}B_r^d(i,h),$$
and since all the symbols have equal probabilities:
\begin{equation}
\label{Bdc}
\P((B^d(i,h))^c)\leq k \P((B_1^d(i,h))^c).
\end{equation}
Now if 
$B_1^{d}(i,h)$ does not hold true, then the letter $1$ 
appears more than $h/\kappa$ times in $Y_iY_{i+1}\ldots Y_{i+h}$.    
Let $W_j$ be the Bernoulli random variable which is equal to one 
if $Y_j=1$ and zero otherwise, and so 
if the event $(B_1^d(i,h))^c$ holds true then so does the event  
\begin{equation}
\label{sum}
\sum_{j=i+1}^{i+h}W_j\geq \frac h{\kappa},\,
\end{equation}
where again by equiprobability, 
$\P(W_j=1)=\E W_j =1/k$.
Hence,
\begin{equation}
\label{PBc2}
1-\P(B_1^{d}(i,h))\le \P\left(
\sum_{j=i+1}^{i+h}W_j-\E\left(\sum_{j=i+1}^{i+h}W_j\right)\geq \frac{h}{\kappa} - \frac{h}{k}\right),
\end{equation}
and since $(1/\kappa)-(1/k)>0$, another use of Hoeffding's inequality leads
to
\begin{equation}
\label{sumo}
1-\P(B_1^{d}(i,h))\leq \exp\left(-2h\left(\frac{1}{\kappa}-
\frac{1}{k}\right)^2\right).
\end{equation}
Since, 
$$(B^d)^c=\bigcup_{i\in[1,2d]}\bigcup_{r\in\{1,\ldots,k\}}(B_r^d(i,h))^c,$$
then
$$\P((B^d)^c)\leq 
\sum_{i\in[1,2d]}\sum_{r\in\{1,\ldots,k\}}
\P((B_r^d(i,h))^c)\le 
2dk\P((B_1^d(1,h))^c),$$
which, with \eqref{sumo}, lead to the announced result:  
$$\P((B^d)^c)\leq 
2dk\exp\left(-2h\left(\frac{1}{\kappa}-
\frac{1}{k}\right)^2\right).$$
\end{proof}

\

As shown now, the event $D^d$ occurs with high probability.

\begin{lemma}
\label{lemmaD}
For $d$ large enough,
$$\P(D^d)\geq 1-4d\exp\left(-d\frac{(\gamma^*_k-\gamma_k^c)^2}{128}\right).$$
\end{lemma}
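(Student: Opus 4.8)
The plan is to reduce the statement to a concentration estimate for a two--part decomposition of the score and to exploit the concavity of $\gamma_k$ together with the defining property \eqref{q1q2c} of $q$. Write $n:=d-(\l/2)$ and, for a candidate splitting spot $i$, set
$$A(i):=|LCS(X_1\cdots X_n;Y_1\cdots Y_i)|,\qquad C(i):=|LCS(X_{n+1}\cdots X_{2d};Y_{i+1}\cdots Y_{2d})|,$$
and $S(i):=A(i)+C(i)$. The convention used to define the ``spot'' of $X_{d-(\l/2)}$ is precisely what guarantees that whenever an optimal alignment places $X_{d-(\l/2)}$ at $i$, the whole alignment splits cleanly at $i$ into an optimal alignment of the two prefixes and an optimal alignment of the two suffixes, so that $|LCS(X;Y)|=S(i)$. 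Since $|LCS(X;Y)|\ge S(n)$ always, the event $(D^d_I)^c$ is contained in $\bigcup_{i\notin[i_1,i_2]}\{S(i)\ge S(n)\}$. It then suffices to bound $\P(S(i)-S(n)\ge 0)$ for each bad $i$ and to take a union bound: at most $2d$ values of $i$, doubled to account for $D^d_{II}$, which yields the prefactor $4d$.

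The heart of the argument is the expectation estimate $\E S(n)-\E S(i)\ge \Omega\bigl(d(\gamma^*_k-\gamma_k^c)\bigr)$ for every $i\notin[i_1,i_2]$. For the lower bound on $\E S(n)$, the prefix term is an LCS of two iid strings of equal length $n$, so $\E A(n)=n\gamma_k(0)+O(\sqrt{d\ln d})=n\gamma^*_k+o(d)$ by \eqref{alexander}; for the suffix term I would align the long block entirely with gaps and apply \eqref{alexander} together with the continuity of $\gamma_k$ at $0$ to the two remaining iid strings (of lengths $n$ and $n+\l$, whose imbalance tends to $0$), giving $\E C(n)\ge n\gamma^*_k-o(d)$ and hence $\E S(n)\ge 2d\gamma^*_k-o(d)$. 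For the upper bound on $\E S(i)$ with $i\ge i_2$, the key observation is a telescoping of lengths: $Y_1\cdots Y_i$ and $Y_{i+1}\cdots Y_{2d}$ partition $Y$ while the two $X$--pieces partition $X$, so the two average lengths sum to exactly $2d$. Removing the block from the suffix piece costs at most $\l$ via $|LCS(ABC;W)|\le |LCS(AC;W)|+|B|$, after which both terms are LCS of iid strings whose imbalance parameters have absolute value at least $q$: indeed $i\ge i_2$ forces $p=(i-n)/(i+n)\ge q$ for the prefix, and one checks that this simultaneously forces the (block--removed) suffix imbalance to exceed $q$ as well. Applying \eqref{alexander}, concavity and \eqref{q1q2c} (so that $\gamma_k\le\gamma_k^c$ on $\{|p|\ge q\}$) to each term then gives $\E S(i)\le 2d\gamma_k^c+o(d)$. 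Subtracting produces the claimed linear gap; the case $i\le i_1$ is identical since the telescoping is insensitive to the sign of the imbalance, and the point $X_{d-(\l/2)+\kappa d^{\alpha}}$ governing $D^d_{II}$ merely shifts all lengths by $\kappa d^{\alpha}=o(d)$ and is treated verbatim.

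Finally I would apply Hoeffding's martingale inequality to $S(i)-S(n)$, viewed as a function of the $4d$ iid letters $X_1\cdots X_{2d}$, $Y_1\cdots Y_{2d}$; since each letter enters at most two of the four LCS--terms and changing it moves each such term by at most one, every coordinate changes $S(i)-S(n)$ by at most $2$. Together with $\E(S(i)-S(n))\le-\Omega\bigl(d(\gamma^*_k-\gamma_k^c)\bigr)$ this gives $\P(S(i)-S(n)\ge 0)\le\exp\bigl(-\Omega(d(\gamma^*_k-\gamma_k^c)^2)\bigr)$, and the explicit constant $128$ comes out once a safe multiple of the error terms is retained. The main obstacle is not the concentration step but the bookkeeping that makes the linear gap robust: one must verify that for every $i$ outside $[i_1,i_2]$ \emph{both} pieces are genuinely lopsided (so that $\gamma_k^c$ dominates each term, a delicate point near the boundary $i=i_2$ where the block length $\l$ enters), and that the accumulated lower--order corrections---the block--removal loss $O(\l)$, the Alexander error $O(\sqrt{d\ln d})$, and the integer roundings in \eqref{i1}--\eqref{i2}---all remain $o(d)$ and hence negligible against $d(\gamma^*_k-\gamma_k^c)$.
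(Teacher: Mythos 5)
Your strategy is, in essence, the paper's own: fix a candidate splitting spot $i$, compare the split score at a bad $i$ against a balanced reference, extract a linear-in-$d$ expectation gap from Alexander's rate \eqref{alexander} together with concavity and the definition of $i_1,i_2$ through $q$, and finish with Hoeffding's inequality and a union bound over at most $2d$ bad spots, doubled for $D^d_{II}$ (the paper's reference is $LC^*_{2d}$, the LCS of the iid-replaced string, rather than your $S(n)$; this difference is immaterial). The clean-split identity $|LCS(X;Y)|=S(i)$ under the paper's gap convention, the inclusion $(D^d_I)^c\subset\bigcup_{i\notin[i_1,i_2]}\{S(i)\ge S(n)\}$, the lower bound $\E S(n)\ge 2d\gamma^*_k-o(d)$, and the concentration step are all sound.

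There is, however, one step that fails: the claim that for \emph{every} $i\notin[i_1,i_2]$ both the prefix pieces and the block-removed suffix pieces are $q$-lopsided, so that $\E S(i)\le 2d\gamma^c_k+o(d)$. This is correct for $i\ge i_2$ (your boundary check there does go through for $d$ large), but it is false for $i\le i_1$, and not by a mere boundary effect of size $\ell$. Write $n=d-\ell/2$. The prefix pieces $(X_1\cdots X_n,\,Y_1\cdots Y_i)$ have imbalance at least $q$ exactly when $i\le\frac{1-q}{1+q}\,n=i_1$, whereas the suffix pieces, of lengths $n$ and $2d-i$, have imbalance at least $q$ only when $2d-i\ge\frac{1+q}{1-q}\,n$, i.e., when $i\le 2d-\frac{1+q}{1-q}\,n$; and one computes $i_1-\bigl(2d-\frac{1+q}{1-q}n\bigr)=\frac{4q^2d-(1+q^2)\ell}{1-q^2}$, which is of order $d$. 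Concretely, at $i=i_1$ the prefix imbalance equals $-q$, while the suffix imbalance is $\frac{2qd+\ell}{2d(1+2q)-q\ell}\to\frac{q}{1+2q}<q$. So on a range of bad $i$'s of length linear in $d$ just below $i_1$, the suffix pieces are $q$-balanced, their rate is (by \eqref{q1q2c}) at least $\gamma^c_k$ rather than at most $\gamma^c_k$, and your bound $\E S(i)\le 2d\gamma^c_k+o(d)$ breaks down; the asymmetry you dismissed with ``the case $i\le i_1$ is identical'' is real, because the $q$-balanced window for the prefix is centered (multiplicatively) at $n$ while that for the suffix sits elsewhere, the two differing by order $q^2d$. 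The repair is exactly what the paper does: never invoke lopsidedness of the suffix at all. Bound its rate trivially by $\gamma^*_k$, use the lopsided prefix together with $d_1=(n+i)/2\ge d/4$ to get $\E S(i)\le 2d\gamma^*_k-\frac{d}{4}(\gamma^*_k-\gamma^c_k)+o(d)$, and the linear gap against $\E S(n)\ge 2d\gamma^*_k-o(d)$ survives, after which your Hoeffding and union-bound steps go through unchanged.
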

\begin{proof}
Recall that $D^d=D^d_I\cap D^d_{II}$, where $D^d_I(i)$ is the event that 
there exists an optimal alignment 
of $X$ and $Y$ aligning $X_{d-(\l/2)}$ to $i$.  Now, for $D^d_I$ to hold it is 
enough that none of the events $D^d_I(i)$ hold for all  
$i\notin [i_1,i_2]$.  
Hence, 
$$\bigcap_{i\in[1,2d]\backslash [i_1,i_2]}(D_I^d(i))^c\subset D^d_I,$$ 
so that 
\begin{equation}
\label{Ddc}
\P((D^d_I)^c)\leq \sum_{i\in[1,2d]\backslash [i_1,i_2]}\P(D^{d}_I(i)).
\end{equation}
Let $L(i)$ be the maximal score obtained when leaving out the big block but 
giving as constraint that $X_{d-(\l/2)}$ gets aligned with $i$, i.e.,
\begin{align*}
L(i)&:=|LCS(X_1X_2\cdots X_{d-(\l/2)};Y_1Y_2\cdots Y_i)|\\
&\quad \quad + \quad 
|LCS(X_{d+(\l/2)+1}X_{d+(\l/2)+2}\cdots X_{2d};Y_{i+1}Y_{i+2}
\cdots Y_{2d})|.\end{align*}
As shown next, when $D^d_I(i)$ holds then,  
$$L(i)+2d^\beta\geq 
LC_{2d}^*,$$
where $LC_{2d}^*$ is the length of the LCS 
of $X_1^*X_2^*\ldots X^*_{2d}$ and $Y_1Y_2\ldots Y_{2d}$.  
Indeed, less than $d^\beta$ letters are changed between $X$ and $X^*$, and so the 
length-difference 
between the LCS of $X$ and $Y$ and the LCS of $X^*$ and $Y$ is at most 
$d^\beta$.  Also, if $D^d_I(i)$ holds then 
the difference between $L(i)$ and the length of the LCS of 
$X$ and $Y$ is at most $d^\beta$.  Therefore, the difference between the lengths of 
$L(i)$ and $LC_{2d}^*$ is at most $2d^\beta$, when $D^d_I(i)$ holds.  
Hence, 
\begin{equation}
\label{Ddri}
\P(D^d_I(i))\leq\P(L(i)+2d^\beta\geq 
LC_{2d}^*), 
\end{equation}
with 
\begin{equation}
\label{Ddri2}
 \P(L(i)+2d^\beta\geq 
LC_{2d}^*)=\P(L(i)-LC_{2d}^*-\E L(i) +\E LC_{2d}^* \geq 
\E LC_{2d}^* -\E L(i) -2d^\beta).
\end{equation}
But as $d\rightarrow \infty$, 
$\E LC_{2d}^* /2d \to \gamma^*_k$, and via \eqref{alex} 
\begin{equation}
\label{eld2}\E LC_{2d}^*\geq 2d\gamma_k^*-C_L\sqrt{2d\ln 2d}, 
\end{equation}
for some constant $C_L>0$.  But, by definition, 
\begin{equation}
\label{eli}
\E L(i)=
d_1\gamma_k(p_1,d_1)+d_2\gamma_k(p_2,d_2),
\end{equation}
where 
\begin{align*}
d_1:=&\frac12\left(d-\frac\l2+i\right), \quad p_1:=\frac{i-d+(\l/2)}{d-(\l/2)+i} 
= \frac{2i-2d+\l}{2i+2d-\l},\\
d_2:=&\frac12\left(3d-\frac\l2-i\right), \quad p_2:=\frac{i-d-(\l/2)}{3d-(\l/2)-i}  
= \frac{2i-2d+\l}{6d-2i-\l}. 
\end{align*}
Moreover, 
\begin{equation}
\label{eins}
\gamma_k(p_2,d_2)\leq \gamma^*_k,
\end{equation}
and 
\begin{equation}
\label{zwei}
d_1+d_2=2d-\frac\l2\leq 2d.
\end{equation}
Now, if $i\notin[i_1,i_2]$, then by the very 
definition of $i_1$ and $i_2$, 
\begin{equation}
\label{drei}
\gamma_k(p_1)\leq \gamma^c_k.   
\end{equation} 
Next, by a sub-additivity argument,
$$\gamma_k(p_1)=\lim_{d\rightarrow\infty}\gamma_k(p_1,d)\ge \gamma_k(p_1,d),$$ 
for every $d\ge 1$, and therefore 
\begin{equation}\label{gamma2}
\gamma_k(p_1,d_1)\leq \gamma_k(p_1).  
\end{equation}
Applying \eqref{gamma2}, \eqref{eins}, \eqref{zwei} and \eqref{drei} 
to \eqref{eli}, and assuming that 
$d$ is large enough so that $d_1\geq d/4$, lead to 
\begin{equation}
\label{eli2}  
\E L(i) \leq 
2d\gamma^*_k-\frac{d(\gamma^*_k-\gamma_k^c)}{4}\,.
\end{equation}
Then, \eqref{eli2} and \eqref{eld2} give  
\begin{equation}
\label{l2dli}
\E(LC_{2d}^*-L(i))-2d^\beta\geq \frac{d(\gamma^*_k-\gamma_k^c)}{4}
-C_L\sqrt{2d\ln 2d}-2d^\beta.  
\end{equation}
By definition, $\gamma^*_k-\gamma_k^c>0$ and $\beta<1$.  So, for $d$ large enough, 
the right-hand side of \eqref{l2dli} is at least 
$d(\gamma^*_k-\gamma_k^c)/8$, so that 
\begin{equation}
\label{l3dli}
\E(LC_{2d}^*-L(i))-2d^\beta\geq \frac{d(\gamma^*_k-\gamma_k^c)}{8}\,.
\end{equation}
Using \eqref{l3dli} with \eqref{Ddri} and \eqref{Ddri2}, lead to:   
\begin{equation}
\label{Ddr}
\P(D^d_I(i))\leq
\P\left(L(i)-LC_{2d}^*-\E L(i)+\E L_{2d}^* \geq d\frac{(\gamma^*_k-\gamma_k^c)}{8}
\right), 
\end{equation} 
for $d$ large enough.  By Hoeffding's inequality, 
\begin{equation}
\label{bound}
\P(D^d_I(i))\leq \exp\left(-d\frac{(\gamma^*_k-\gamma_k^c)^2}{128}\right),
\end{equation}
for $i\notin [i_1,i_2]$.   Combining  \eqref{bound} with \eqref{Ddc} gives 
\begin{equation}
\label{Ddc2}
\P((D^d_I)^c)\leq \sum_{i\in[1,2d]\backslash [i_1,i_2]}
\exp\left(-d\frac{(\gamma^*_k-\gamma_k^c)^2}{128}\right)
\leq 2d\exp\left(-d\frac{(\gamma^*_k-\gamma_k^c)^2}{128}\right).
\end{equation}
The same bound can be found for $\P(D^{dc}_{II})$ and this finishes the proof.\end{proof}

\

As the next lemma shows, the event $F^d$ also holds with high probability.

\begin{lemma}
\label{lemmaF}
There exists a constant $C_F > 0$, independent of $d$, such that 
$$\P(F^d)\geq 1-e^{-C_Fd^{2\beta-1}},$$
for all $d\geq 1$.
\end{lemma}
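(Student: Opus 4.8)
The plan is to prove Lemma~\ref{lemmaF} along exactly the same lines as Lemma~\ref{lemmaC}, the only change being that the increment under consideration is now the block of $d^\beta$ iid letters appended to the \emph{$X$-side} (namely $X^*_{d-(\l/2)+1}\cdots X^*_{d+(\l/2)}$) rather than the $h=\kappa d^{\alpha}$ letters appended to the $Y$-side. Accordingly, for $i\in[i_1,i_2]$ set
\[
\Delta_i:=|LCS(X_1^*\cdots X_{d-(\l/2)+d^\beta}^*;Y_1\cdots Y_i)|-|LCS(X_1^*\cdots X_{d-(\l/2)}^*;Y_1\cdots Y_i)|,
\]
so that $F^d_i=\{\Delta_i\ge \gamma_k^a d^\beta/2\}$ and $F^d=\bigcap_{i\in[i_1,i_2]}F^d_i$. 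The argument splits into an expectation estimate and a concentration estimate, and since the increment $d^\beta$ contributes a factor $d^{2\beta}$ to the exponent (divided by the $O(d)$ number of coordinates), the resulting rate is $d^{2\beta-1}$, as claimed.

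First I would establish the analogue of Lemma~\ref{difficult}: for $d$ large and every $i\in[i_1,i_2]$, $\E\Delta_i\ge \tfrac{1}{2}\gamma_k^b\, d^\beta$. This is the same computation as in Lemma~\ref{difficult} after exchanging the roles of the two strings (LCS being symmetric in its arguments, so appending to the $X$-side amounts to appending to the $Y$-side of the swapped pair, with $p$ replaced by $-p$ and $\gamma_k$ being symmetric) and replacing $h$ by $d^\beta$. Writing $\E\Delta_i=d_2\gamma_k(d_2,p_2)-d_1\gamma_k(d_1,p_1)$ with $d_1,d_2$ the two half-sums of lengths and $p_1,p_2$ the corresponding length-imbalances, one replaces $\gamma_k(d_j,p_j)$ by $\gamma_k(p_j)$ at the cost of the Alexander error $O(\sqrt{d\ln d})$ from \eqref{sqrt(d)}; since $\beta>1/2$ this error is negligible against $d^\beta$. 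The term $\tfrac12 d^\beta\gamma_k(p_1)$ is controlled via $\gamma_k(p_1)\ge\gamma_k^c$ (valid because $p_1\in[-q,q]$ for $i\in[i_1,i_2]$), and the cross term $d_2\,\delta\gamma_k$ by the Lipschitz bound \eqref{gamma'}, exactly as in \eqref{additional}--\eqref{eq.xx}. The one point requiring care --- and the place I expect the main (though modest) obstacle --- is that appending $d^\beta$ letters to the $X$-side shifts $p_1$ to a value $p_2$ with $p_2-p_1=O(d^{\beta-1})$, which may push $p_2$ slightly outside $[-q,q]$ when $i$ is near the endpoints $i_1$ or $i_2$; this is handled exactly as at the end of the proof of Lemma~\ref{difficult}, using that \eqref{gamma'} is a strict inequality and that $\gamma_k'(q^+)=\gamma_k'(q^-)$ (equivalently $\gamma_k'((-q)^+)=\gamma_k'((-q)^-)$ by symmetry), so that by continuity the Lipschitz bound survives on the slightly enlarged range for $d$ large.

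With the expectation bound in hand, the concentration step is routine. The quantity $\Delta_i$ is a function of the $O(d)$ independent coordinates $X_1^*,\dots,X_{d-(\l/2)+d^\beta}^*,Y_1,\dots,Y_i$, and changing any single coordinate alters $\Delta_i$ by at most $2$. On $(F^d_i)^c$ we have $\Delta_i<\gamma_k^a d^\beta/2$, hence $\Delta_i-\E\Delta_i<(\gamma_k^a-\gamma_k^b)d^\beta/2<0$ since $\gamma_k^a<\gamma_k^b$ by \eqref{kk_1}. Hoeffding's bounded-differences inequality then gives $\P((F^d_i)^c)\le \exp(-c_0\,d^{2\beta-1})$ for an explicit constant $c_0>0$ proportional to $(\gamma_k^b-\gamma_k^a)^2$, and a union bound over the at most $2d$ indices $i\in[i_1,i_2]$ yields $\P((F^d)^c)\le 2d\,\exp(-c_0 d^{2\beta-1})$. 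Finally, because $2\beta-1>0$, the polynomial prefactor $2d$ is absorbed by slightly decreasing the constant, so that $\P(F^d)\ge 1-e^{-C_F d^{2\beta-1}}$ holds for $d$ large; the finitely many remaining small values of $d$ are accommodated by further shrinking $C_F$, giving the bound for all $d\ge 1$.
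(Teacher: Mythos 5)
Your proposal is correct and follows essentially the same route as the paper: an expectation bound $\E\Delta_i\ge \gamma_k^b d^\beta/2$ obtained exactly as in Lemma~\ref{difficult} (with the same boundary care near $\pm q$ using strictness of \eqref{gamma'} and differentiability at $q$), followed by Hoeffding's bounded-differences inequality over the $O(d)$ coordinates and a union bound over $[i_1,i_2]$, giving the rate $d^{2\beta-1}$. The only cosmetic difference is that you import the expectation computation by swapping the two strings and using the symmetry of the LCS and of $\gamma_k$, whereas the paper rewrites the computation with the reversed signs (noting that with $\delta p<0$ the delicate endpoint is $i_1$ rather than $i_2$); this changes nothing of substance.
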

\begin{proof}The proof is very similar 
to the proof that $\P(C^d)$ occurs with high probability 
(see Lemma~\ref{lemmaC} and Lemma~\ref{lemmaC}).  
Nevertheless we present large parts of the proof, since many inequalities are reversed 
and the infinitesimal quantities are of different order. 
Let 
$$\Delta := |LCS(X_1^*X_2^*\ldots X^*_{d-(\l/2)+d^\beta};Y_1Y_2\ldots Y_{i})|
-|LCS(X_1^*X_2^*\ldots X^*_{d-(\l/2)};Y_1Y_2\ldots Y_i)|,$$ 
and assume at first that $i\in [d, i_2]$ (see \eqref{i1}).  
We first show that  
\begin{equation}
\label{hallohallo}
\E\Delta\!\geq \frac{\gamma^b_k}{2}d^\beta.
\end{equation}
As in the proof of Lemma~\ref{lemmaC}, with its notation, 
$$\E\Delta=d_2\gamma_k(d_2,p_2)-d_1\gamma_k(d_1,p_1),$$
where 
\begin{align*}
d_2:=&\frac12 \left(i+d-\frac\l 2+d^\beta\right)\!,  \, 
\quad p_2:=\frac{i-d+(\l/2)-d^\beta}{i+d-(\l/2)+d^\beta},\\
d_1:=&\frac12 \left(i+d-\frac\l 2\right), \quad \quad \quad p_1:=\frac{i-d+(\l/2)}{i+d-(\l/2)}.
\end{align*}
Again, for all $p\in (-1,1)$ and all $d\ge 1$, and since $d_1,d_2\leq 2d$, 
\begin{equation}
\label{Deltad2*}
\E\Delta\geq d_2\gamma_k(p_2)-d_1\gamma_k(p_1)-
2C_\gamma\sqrt{2d \ln 2d}.
\end{equation}
Now, 
\begin{equation}
\label{Deltad3*}
d_2\gamma_k(p_2)-d_1\gamma_k(p_1)
=\frac{d^\beta\gamma_k(p_1)}2+d_2\frac{\delta\gamma_k}{\delta p}\,\delta p,
\end{equation}
where $\delta\gamma_k:=\gamma_k(p_2)-\gamma_k(p_1)$ 
and where $\delta p:=p_2-p_1<0$.  Next, if $p_2\geq 0$, then 
$\delta\gamma_k\geq 0$ and so 
\begin{equation}
\label{I*}
d_2\gamma_k(p_2)-d_1\gamma_k(p_1)\geq \frac{d^\beta\gamma_k(p_1)}2.  
\end{equation}
If $p_2\leq 0$, then 
\begin{equation}
\label{delta_p*}
|\delta p|=
\left|\frac{-2id^\beta}{(i+d-(\l/2))(i+d -(\l/2)+d^\beta)}\right|\leq 2\frac{d^\beta}{d},
\end{equation}
for $d$ large enough, e.g., $i\ge \l/2$, i.e., $(1-q)d \ge \l$.  
Since $i\in[d,i_2]$, $p_1\in [0,q]$.  Moreover, $p_2<p_1$ and $|\delta p|\leq {2d^\beta}/{d}$, 
for $d$ large enough, imply that 
$p_2\in[-q,q]$ and therefore via \eqref{gamma'},  
\begin{equation}
\label{aaaa*}
\frac{|\delta\gamma_k|}{|\delta p|} <  
\frac{(\gamma^c_k-\gamma^b_k)}{16}.
\end{equation}
Combining \eqref{aaaa*} with \eqref{delta_p*} and since $d_2\leq 2d$,
\begin{equation}
\label{additional*}
\frac{d^\beta\gamma_k(p_1)}2+d_2\,\frac{|\delta\gamma_k|}{\delta p}\,\delta p
\ge d^\beta\left(\frac{\gamma_k(p_1)}2-\frac{(\gamma^c_k-\gamma^b_k)}{4}\right).
\end{equation}
Now, $i\in[i_1,i_2]$ hence $p_1\in[-q,q]$ and therefore, by the very definition of $i_1,i_2$,
$\gamma_k(p_1)\geq \gamma_k^c$, which yields
\begin{equation}\label{eq.xx*}
d^\beta\left(\frac{\gamma_k(p_1)}2-\frac{(\gamma^c_k-\gamma^b_k)}{4}\right)
\ge
d^\beta
\left(\frac{\gamma_k^b}2+\frac{(\gamma^c_k-\gamma^b_k)}{4}\right).\end{equation}
Now, \eqref{eq.xx*} together with \eqref{additional*},
\eqref{Deltad2*} and \eqref{Deltad3*} imply that
\begin{equation}
\label{Deltad4*}
\E\Delta\ge d^{\beta}\frac{\gamma^c_k}{2}+d^\beta\frac{(\gamma_k^c-\gamma_k^b)}{4}-
2C_\gamma\sqrt{2d \ln 2d}.
\end{equation}
Finally, since $\beta>1/2$ is independent of
$d$, and since
$\gamma_k^c-\gamma_k^b>0$, then $2C_\gamma\sqrt{2d \ln 2d}$ becomes ``negligible" when compared 
to $d^{\beta}(\gamma_k^c-\gamma_k^b)/4$.  So, for large enough 
$d$, \eqref{Deltad4*} implies that 
$\E\Delta\geq  d^{\beta}{\gamma^b_k}/{2}$,
which is what we intended to prove for $i\in [d, i_2]$.  

Next, for $i\in [i_1, d]$,
and $d$ large enough, the inequality \eqref{hallohallo} remains valid.  Indeed,
first, one only needs $i\in [d,i_2]$ instead of $i\in[i_1,i_2]$, to obtain 
\eqref{aaaa*}; more specifically
one needs $p_2\in[-q,q]$. 
However, \eqref{gamma'} is a strict inequality and so, for 
$d$ large enough, even if $p_2 \notin [-q,q]$ but as long as $i\in[i_i,i_2]$, 
by continuity, recalling also that $|\delta p| \le 2d^\beta/d$, 
and since $\gamma^\prime_k(q^+)=\gamma^\prime_k(q^-)$, the inequality 
\eqref{aaaa*} continues to hold.  
This will then imply \eqref{hallohallo} for all $i\in[i_1,i_2]$.  

We wish now to upper-bound the probability of the complement of $F^{d}_i$ 
when $i\in[i_1,i_2]$.  From \eqref{hallohallo}, 
\begin{equation}
\label{Fnc101}
\P((F^{d}_i)^c)=\P\left(\Delta \leq d^\beta\frac{\gamma^a_k}{2}\right)
\leq 
\P\left(\Delta-\E\Delta\leq d^\beta\frac{\gamma^a_k-\gamma^b_k}{2}\right).
\end{equation} 
Note that $\Delta$ depends on $d^*:=d-(l/2)+d^\beta+i$, iid entries 
$X_1^*,X_2^*,\ldots, X^*_{d-(\l/2)+d^\beta}$ and $Y_1,Y_2,\ldots, Y_{i}$, and so 
by Hoeffding's inequality, 
\begin{align*}
\P\left(\Delta-\E\Delta\leq d^\beta\frac{\gamma^a_k-\gamma^b_k}{2}\right)
&\leq \exp\left(-\frac{(d^\beta(\gamma^a_k-\gamma^b_k))^2}{8d^*}\right)\\
&\leq  \exp\left(-d^{2\beta-1}\frac{(\gamma^b_k-\gamma^a_k)^2}{32}\right),
\end{align*} 
since for $d$ large enough, $d^*\leq 4d$.
Hence, since $[i_1,i_2]$ contains less than $d$ elements 
(for $d$ large enough), 
$$\P((F^{d})^c)\leq\sum_{i\in[i_1,i_2]}\P((F^{d}_i)^c)\le 
d\exp\left(-d^{2\beta-1}\frac{(\gamma^b_k-\gamma^a_k)^2}{32}\right).$$
Finally, recall that $\beta > 1/2$.   
\end{proof}

\paragraph{Proof of Theorem~\ref{maintheoremsubsection}.}
This is the main theorem for three or more letters, i.e., for $\gamma^*_k/2>1/k$.  
It states that the events $E^d$ and $K^d$ 
both hold high probability.  Hence, for $d$ large enough, 
typically with three or more letters the long block gets mainly aligned with gaps.  
Moreover, this result asserts that replacing the long block with iid symbols typically 
leads to an increase in the LCS which is linear in the length of the long block. 

Let us first handle $E^d$.  By Lemma~\ref{combi1}, 
\begin{equation}
\label{ABC}
\P((E^d)^c)\leq \P((B^d)^c)+\P((C^d)^c)+\P((D^d)^c).
\end{equation}
By Lemma~\ref{lemmaC}, $\P((C^d)^c)$ is of exponential small 
order in $d^{2\alpha-1}$; by Lemma~\ref{lemmaB}, 
$\P((B^d)^c)$ is of exponential small order in $d^{\alpha}$ and by 
Lemma~\ref{lemmaD} $\P((D^d)^c)$ is exponentially 
small in $d$.  Therefore, for $\alpha\in\,(1/2,1)$, 
$\P((E^d)^c)$ is also exponentially small in 
$d^{2\alpha-1}$.  Hence, there exists a constant $C_E>0$, 
independent of $d$ (but depending on $k$) such that 
$$\P((E^d)^c)\leq e^{-C_Ed^{2\alpha-1}}.$$  
Let us, next, turn our attention to the event $K^d$.  
From Lemma~\ref{combiO}, 
\begin{equation}
\label{ABC2}
\P((K^d)^c)\leq \P((B^d)^c)+\P((C^d)^c)+\P((D^d)^c)+\P((F^d)^c), 
\end{equation}
and, as already seen,  
$\P((B^d)^c)+\P((C^d)^c)+\P((D^d)^c)$ is of exponential 
small order in $d^{2\alpha-1}$.  By Lemma~\ref{lemmaF}, 
$\P((F^d)^c)$ is of exponential small order in 
$d^{2\beta-1}$.  Since $2\alpha-1<2\beta-1$, the right side 
of \eqref{ABC2} is thus exponentially small 
in $d^{2\alpha-1}$, and therefore there exists a constant 
$C_K>0$, independent of $d$, such that 
$$\P((K^d)^c)\leq e^{-C_Kd^{2\alpha-1}}.$$
\hfill~\rule{0.5em}{0.5em}

\subsection{Proofs For Binary Strings}\label{rigorous2}
The purpose of this subsection is to prove 
Theorem~\ref{maintheoremsubsection2}, and therefore throughout the rest of the article, 
$k\gamma^*_k<2$, i.e., $k=2$.  Theorem~\ref{maintheoremsubsection2} states 
that typically, for $d$ large enough, the long block gets mainly aligned
with symbols and not with gaps.  The corresponding event $G^d$  was defined in 
Section~\ref{statements}.  Theorem~\ref{maintheoremsubsection2} also asserts that 
replacing the long block with iid symbols typically increases the LCS linearly in the length 
of the long constant block.  The corresponding event $H^d$ was also defined in 
Section~\ref{statements}.  So, below, we intend to prove 
that both events hold with high probability and this is done in a way very similar 
to the $3$-or more letter-case.  

Let $k_{II}$ and 
$\gamma_2^{II}$ be two constants, independent of $d$, such that 
$k_{II}>2$ and $\gamma_2^{II}>\gamma^*_2$, but also such that 
$k_{II}\gamma_2^{II}<2$ (this last choice is certainly possible since 
$\gamma^*_2<1$).  Actually, for the argument which follows, any values $k_{II}>2$ 
and $\gamma_2^{II}>\gamma^*_2$ will do, provided 
the constants are close enough to their respective bounds 
and do not depend on $d$.

Let now $B^d_{II}$ be the event that in any piece of 
$Y$ of length $k_{II}d^{\alpha}$, there are at least $d^{\alpha}$ 
ones and zeros.  More precisely, let  
$B^d_{II}(i)$ be the event that 
$$ \sum_{j=i+1}^{i+h}Y_j\geq d^{\alpha},$$ 
and that 
$$\sum_{j=i+1}^{i+h}|Y_j-1|\geq d^{\alpha},$$
where $h:=k_{II}d^{\alpha}$.  Finally, let 
\begin{equation*} 
B^d_{II}=\bigcap_{i=1}^{2d-h}B^d_{II}(i).
\end{equation*}

Let $C^d_{II}$ be the event that an increase of the length of 
$Y_1\ldots Y_i $ by $k_{II}d^{\alpha}$ 
leads to an increase of the LCS of $Y_1\ldots Y_i$ 
and $X_1\ldots X_{d-(\l/2)}$ of no more than $k_{II}d^{\alpha}\gamma_2^{II}/2$ 
for all $i+k_{II}d^{\alpha}\in [i_1,i_2]$.  More precisely, for $h=k_{II}d^{\alpha}$, 
$$C^d_{II}(i)\!:=\left\{|LCS(X_1\ldots X_{d-(\l/2)};Y_1\ldots Y_{i+h})| 
- |LCS(X_1\ldots X_{d-(\l/2)};Y_1\ldots Y_i)|\leq \frac{h\gamma_2^{II}}2\right\},$$ 
$$C^d_{II}:=\bigcap_{i+h\in[i_1,i_2]} C^d_{II}(i).$$ 

Recall finally that $G^d$ is the event that the long constant block gets mainly  
aligned with symbols and not with gaps; more precisely, 
$$G^d=\left\{|LCS(X;Y)|>|LCS(X_1X_2\ldots X_{d-(\l/2)}X_{d-(\l/2)+d^{\alpha}+1}
X_{d-(\l/2)+d^{\alpha}+2}\ldots X_{2d};Y)|\right\}.$$

\begin{lemma}
\label{subsetG} 
$$B_{II}^d\cap D^d\cap C^d_{II}\subset G^d.$$
\end{lemma}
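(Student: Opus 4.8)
The plan is to argue by contradiction, running the combinatorial scheme of Lemma~\ref{combi1} with every inequality reversed: here a block symbol that an optimal alignment sends to a gap is an \emph{opportunity} to raise the score rather than a liability. Suppose the sample path lies in $B_{II}^d\cap D^d\cap C^d_{II}$ but not in $G^d$. Deleting letters can never lengthen an LCS, so the failure of $G^d$ forces $|LCS(X;Y)|=|LCS(\tilde{X};Y)|$, where $\tilde{X}$ is obtained from $X$ by deleting the first $d^{\alpha}$ symbols of the block. Lifting an optimal alignment of $\tilde{X}$ and $Y$ by sending those $d^{\alpha}$ symbols to gaps yields an \emph{optimal} alignment $\pi$ of $X$ and $Y$ in which $X_{d-(\l/2)+1},\ldots,X_{d-(\l/2)+d^{\alpha}}$ are all aligned with gaps. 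Let $a$ denote the constant value of the block. The goal is then to surgically modify $\pi$ into a strictly higher-scoring alignment, the desired contradiction.

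To build the competitor, set $h:=k_{II}d^{\alpha}$ and let $i$ be the spot at which $\pi$ aligns $X_{d-(\l/2)}$. The localization event $D^d$ gives $i\in[i_1,i_2]$, and for $d$ large $i-h>0$. I would construct $\bar{\pi}$ in three monotone pieces. On $X^{a}:=X_1\cdots X_{d-(\l/2)}$, realign optimally against the \emph{shorter} prefix $Y_1\cdots Y_{i-h}$, thereby freeing the window $Y_{i-h+1}\cdots Y_i$ of length $h$. Since $i\in[i_1,i_2]$, the event $B^d_{II}(i-h)$ holds, so this window contains at least $d^{\alpha}$ zeros and at least $d^{\alpha}$ ones, hence at least $d^{\alpha}$ copies of $a$; match the $d^{\alpha}$ freed block symbols with $d^{\alpha}$ of these copies in increasing order. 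Finally, leave the remainder of $\pi$ — the rest of the block together with $X_{d+(\l/2)+1}\cdots X_{2d}$, all of which $\pi$ aligns inside $Y_{i+1}\cdots Y_{2d}$ — untouched. The three pieces occupy the disjoint, correctly ordered $Y$-ranges $\{\le i-h\}$, $(i-h,i]$ and $\{>i\}$, so $\bar{\pi}$ is a genuine alignment.

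The accounting then goes as follows. The middle piece contributes exactly $d^{\alpha}$ new matched pairs and the tail is unchanged, so the only possible loss sits on $X^{a}$. Writing $s_1$ for the score of $\pi$ on $X^{a}$ against $Y_1\cdots Y_i$ and using $s_1\le |LCS(X^{a};Y_1\cdots Y_i)|$, the change on this piece is at least $|LCS(X^{a};Y_1\cdots Y_{i-h})|-|LCS(X^{a};Y_1\cdots Y_i)|$. Because $i\in[i_1,i_2]$, the event $C^d_{II}(i-h)$ applies and bounds this quantity below by $-h\gamma_2^{II}/2$. Hence the total score gain is at least
$$d^{\alpha}-\frac{h\gamma_2^{II}}{2}=d^{\alpha}\left(1-\frac{k_{II}\gamma_2^{II}}{2}\right)>0,$$
where strict positivity is exactly the defining property $k_{II}\gamma_2^{II}<2$ of the binary constants. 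This contradicts the optimality of $\pi$ and yields $B_{II}^d\cap D^d\cap C^d_{II}\subset G^d$.

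Two points would need the most care, and I expect the second to be the main (if routine) obstacle. First, the orientation of $C^d_{II}$: it is phrased as an \emph{upper} bound on the gain from \emph{appending} $h$ letters to a $Y$-prefix, and the argument must repurpose it as an upper bound on the \emph{loss} from \emph{shortening} that prefix by $h$; the step $s_1\le |LCS(X^{a};Y_1\cdots Y_i)|$ is precisely what lets the comparison be made against $|LCS(X^{a};Y_1\cdots Y_i)|$ rather than against $s_1$, and without it the loss would not be controlled. Second, the gap-convention bookkeeping: one must verify that the spot $i$ really confines every matched symbol of $X^{a}$ to $Y_1\cdots Y_i$ and every matched symbol of the tail to $Y_{i+1}\cdots Y_{2d}$, so that freeing $Y_{i-h+1}\cdots Y_i$ creates honest room and leaves the tail intact. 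This monotonicity verification, together with the fact that $D^d$ pins the relevant alignment spots inside $[i_1,i_2]$ so that both $B^d_{II}$ and $C^d_{II}$ become applicable, is the delicate part; the probabilistic estimates that these events hold with high probability are deferred to the subsequent lemmas.
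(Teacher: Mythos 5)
Your proposal is correct and follows essentially the same route as the paper's proof: argue by contradiction, use $D^d$ to place the alignment spot of $X_{d-(\l/2)}$ in $[i_1,i_2]$, free the length-$h$ window of $Y$ ending at that spot, use $B^d_{II}$ to find $d^{\alpha}$ copies of the block symbol there, and use $C^d_{II}$ to bound the realignment loss, yielding the net gain $d^{\alpha}\bigl(1-k_{II}\gamma_2^{II}/2\bigr)>0$. Your extra care about lifting an optimal alignment of the truncated string and about the monotonicity of the modified alignment only makes explicit details the paper leaves implicit.
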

\begin{proof}The proof is by contradiction.  Assume that $\pi$ is an optimal alignment 
of $X$ and $Y$ aligning at least $d^{\alpha}$ symbols from the long block with gaps,  
and assume that $\pi$ aligns $X_{d-(\l/2)}$ with $j$.  
Since $D^d$ holds, then $j\in[i_1,i_2]$.  Now, let $i:=j-h$ (again, $h:=k_{II}d^{\alpha}$), so that 
$i+h\in[i_1,i_2]$.  Thus $C^d_{II}$ ``applies'' 
to $i$, meaning that when ``taking out'' the piece 
$Y_{i+1}Y_{i+2}\ldots Y_j$ from the alignment $\pi$, lose at most 
$h\gamma_2^{II}/2$.  Now, because of 
$B^d_{II}$, the string $Y_{i+1}Y_{i+2}\ldots Y_{i+h}$ contains 
the symbols the long block is made of, at least $d^{\alpha}$  
times.  Hence the $d^{\alpha}$ symbols, from 
the long block, which are aligned by $\pi$ with gaps, 
can be aligned with symbols contained in the piece of string 
$Y_iY_{i+1}\ldots Y_{i+h}$.  Let $\bar\pi$ denote the new alignment  
obtained from modifying $\pi$ in this way.  Transforming 
$\pi$ to $\bar\pi$ gained $d^{\alpha}$ 
aligned symbols from the long block, which where aligned with gaps,  
and now are aligned with symbols.  However, from the previously aligned 
symbols from $Y_i\ldots Y_{i+h}$ we could lose as many 
as $h\gamma_2^{II}/2$ aligned symbols pairs.  
Hence the change is at least 
\begin{equation}
\label{increaseb}
d^{\alpha}-\frac{d^{\alpha}k_{II}\gamma_2^{II}}2
=d^{\alpha}\left( 1-\frac{k_{II}\gamma_2^{II}}{2}\right) > 0,  
\end{equation}
from the choices of $k_{II}$ and $\gamma_2^{II}$.  Hence, \eqref{increaseb} is strictly 
positive, and thus $\bar\pi$ aligns more letter-pairs than $\pi$.  
Therefore, $\pi$ is not optimal, which is a contradiction, and it is not possible 
for $d^\alpha$ symbols, of the long constant block, to get aligned with gaps when  
$B^d_{II}$, $D^d$ and $C^d_{II}$ all hold.  
Hence, $B^d_{II}$, $D^d$ and $C^d_{II}$ jointly imply $G^d$.
\end{proof}

\paragraph{High probability of $G^d$.}  From Lemma~\ref{subsetG}, 
\begin{equation}
\label{PG13}
\P((G^d)^c)\leq \P((B_{II}^d)^c)+\P((D^d)^c)+\P((C^d_{II})^c).
\end{equation}
In  Lemma~\ref{lemmaD}, we already proved that $\P((D^d)^c)$ is exponentially small 
in $d$.  Next, a simple application of Hoeffding's inequality 
shows that $\P((B^{d}_{II})^{c})$ is exponentially small in 
$d^{\alpha}$ and this is left to the reader.   
Let us now deal with $\P(C^d_{II})$ and show  
that $\P((C^d_{II})^c)$ is exponentially small in 
$d^{2\alpha-1}$.  The proof is similar to the proof 
of Lemma~\ref{difficult}.  Using the notations there, but with $h=k_{II}d^{\alpha}$, let   
$$\Delta:=|LCS(X_1^*X_2^*\ldots X^*_{d-(\l/2)};Y_1Y_2\ldots Y_{i+h})|-
|LCS(X_1^*X_2^*\ldots X^*_{d-(\l/2)};Y_1Y_2\ldots Y_{i})|.$$
Again, 
$$\E\Delta=d_2\gamma_2(d_2,p_2)-d_1\gamma_2(d_1,p_1)$$
where $d_1,d_2,p_1,p_2$ are as in Lemma~\ref{difficult}, and 
\begin{equation}
\label{newnew}
\left|\E\Delta-d_2\gamma_2(p_2)+d_1\gamma_2(p_1)\right|\leq
2C_\gamma\sqrt{2d\ln 2d}.  
\end{equation}
Once more, 
\begin{equation}
\label{banana}
d_2\gamma_2(p_2)-d_1\gamma_2(p_1)=
\frac{h\gamma_2(p_1)}{2}+d_2\frac{\delta\gamma_2}{\delta p}\delta p,
\end{equation}
where 
$$\delta\gamma_2:=\gamma_2(p_2)-\gamma_2(p_1),$$
and where, for $d$ large enough, 
\begin{equation}
\label{dunkindonut}0< \delta p:= p_2 - p_1 \leq 2\frac{h}{d}.  
\end{equation}
Since $i\in[i_1,i_2]$ then $p_1\in[-q,q]$.  
If $p_2$ would also be in $[-q,q]$, then 
the inequality \eqref{gamma'bis} would be enough to get our estimates.  
Now, $p_2$ might not be 
in $[-q,q]$, but by continuity in $d$ and since 
$\delta p\rightarrow 0$, as $d\rightarrow\infty$, we have 
for large enough $d$:  
\begin{equation}
\label{gatech}
\frac{|\delta\gamma_2|}{\delta p}\leq 
\frac{\gamma^{II}_2 - \gamma_2^*}{16} 
\end{equation}
Combining  \eqref{newnew}, \eqref{banana}, \eqref{dunkindonut}, 
\eqref{gatech} and \eqref{gamma'bis} 
with the facts that $d_2\leq 2d$ and $\gamma_2(p_1)\leq \gamma_2^*$ lead to:
\begin{equation}
\label{tutu}
\E\Delta-\frac{h\gamma_2^*}{2} \le 
 \frac{h}{4}(\gamma_2^{II}-\gamma_2^*) + 2C_\gamma\sqrt{2d\ln 2d}.   
\end{equation}
Applying Hoeffding's inequality to $\Delta$,  
which depends on $d-(\l/2)+i+h<4d$ iid entries and using \eqref{tutu} yield: 
\begin{align*}
\P((C^{d}_{II}(i))^c) = \P\left(\!\Delta - \E\Delta > \frac{h\gamma_2^{II}}{2} - \E\Delta\!\right) 
&\le \P\left(\!\Delta - \E\Delta > \frac{h(\gamma_2^{II} - \gamma_2^*)}{4} - 
2C_\gamma\!\sqrt{2d\ln 2d}\right)\\
&\le \exp\left(-\frac{k_{II}^2(\gamma_2^{II}-\gamma_2^*)^2}{512}d^{2\alpha-1}\right), 
\end{align*}
since for $d$ large enough, the term $2C_\gamma\sqrt{2d\ln 2d}$ 
becomes negligible when compared to $h=k_{II}d^{\alpha}$, $\alpha > 1/2$.  Next, $[i_1,i_2]$ contains 
at most $2d$ integers and so 
$\P((C^{d}_{II})^c)\leq 2d
\exp\left(-k_{II}^2(\gamma_2^{II}-\gamma_2^*)^2d^{2\alpha-1}/512\right)$.  
Finally, $0<2\alpha-1<\alpha<\beta<1$ and, therefore, the orders of magnitude 
of $\P((B^{d}_{II})^{c})$, $\P((C^{d}_{II})^{c})$ and $\P((D^d)^c)$ 
together with \eqref{PG13} imply that 
$$\P((G^d)^c)\leq e^{-C_G d^{2\alpha-1}},$$
for all $d\ge 1$, where $C_G>0$ is a constant independent 
of $d$.  This finishes establishing that, with high probability, 
a small proportion of gaps is aligned with the long block.
\hfill~\rule{0.5em}{0.5em}

\

\

The rest of this subsection is devoted to analyzing the 
increase in the LCS when replacing the long constant 
block with iid symbols, thus showing that the event $H^d$ 
holds with high probability.  Recall that 
$H^d$ states that the increase in the 
LCS is at least ${\tilde c}_H>0$ times the length of the long block.
(${\tilde c}_H$ is any positive real, independent of $d$, smaller than 
$3\gamma^*_2/2-1$.)  Again, $X$ contains a long block in 
$[d-\l/2 + 1,d+\l/2]$, i.e., 
$$\P\left(X_i=X_{i+1},\quad\forall i\in\left[d-\left(\frac\l 2\right)+1,
d+\left(\frac\l 2\right)-1\right]\right)=1,$$
while $X^*$ is obtained by replacing the long block 
in $X$ by iid symbols, i.e., $X^*_i=X_i$ for all 
$i\notin[d-(\l/2)+1,d+(\l/2)]$.  Let now $\tilde k_{II} < 2$ and 
$\tilde\gamma_2 < \gamma^*_2$ be two constants independent 
of $d$ such that $\tilde k_{II}$ is extremely close to $2$ while 
$\tilde \gamma_2$ extremely close to $\gamma^*_2$, with moreover 
\begin{equation}
\label{kII*}\left(\frac{(1+\tilde k_{II})\tilde\gamma_2}{2}-1\right)>{\tilde c}_H.  
\end{equation}
(These choices are certainly possible 
since $3\gamma^*_2/2-1 \approx 0.2$ 
and since (see \eqref{ch}) ${\tilde c}_H < 3\gamma^*_2/2-1$.)

Next, let $\tilde B^{d}_{II}$ be the event that in any piece of 
$Y$ of length $\tilde k_{II}d^{\beta}$ there are strictly less than 
$d^{\beta}-d^{\alpha}$ zeros and ones.
More precisely, for $h:=\tilde k_{II}d^\beta$, 
$$\tilde B^{d}_{II}(i):= \left\{\sum_{j=i+1}^{i+h}Y_j< d^{\beta}-d^{\alpha}, \quad 
\sum_{j=i+1}^{i+h}|Y_j-1|< d^{\beta}-d^{\alpha}\right\}.$$
Let also  
$$\tilde B^{d}_{II}=\bigcap_{i=1}^{2d-h}\tilde B^{d}_{II}(i).$$

Let $\tilde C^{d}_{II}$ be the event that 
an increase of the length of $Y_1\ldots Y_i$ by $\tilde k_{II}d^{\beta}$ 
and an increase of the length of $X_1\ldots X_{d-(\l/2)}$ by $d^\beta$ leads to an increase 
of the LCS by at least $d^\beta(1+\tilde k_{II})\tilde \gamma_2/2$, 
for all $i\in [i_1,i_2]$.  More precisely,  let $\tilde C^{d}_{II}(i)$ be the event that 
$$|LCS(X_1\ldots X_{d-(\l/2)+d^\beta};Y_1\ldots Y_{i+h_y})|
-|LCS(X_1\ldots X_{d-(\l/2)};Y_1\ldots Y_i)|\geq
d^\beta\!\left(\!\!\frac{(1+\tilde k_{II})\tilde \gamma_2}{2}\!\!\right)\!\!,$$
where, again, $h:=\tilde k_{II}d^\beta$, and let 
$$\tilde C^{d}_{II}:=\bigcap_{i\in [i_1,i_2]} \tilde C^{d}_{II}(i).$$   
Once more, $H^d$ is the event that replacing the long constant block 
by iid symbols increases the LCS by at least ${\tilde c}_H d^\beta$; more precisely, 
$$H^d=\left\{|LCS(X^*;Y)|
-|LCS(X;Y)|\geq {\tilde c}_H d^\beta\right\}.
$$

\

\begin{lemma}
\label{Bd*}
$$\tilde B^{d}_{II}\cap \tilde C^{d}_{II}\cap D^d\subset H^d.$$
\end{lemma}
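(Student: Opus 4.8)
The plan is to argue by an explicit alignment exchange, in the spirit of Lemmas~\ref{combiO} and~\ref{subsetG}, except that now the modification is designed to \emph{increase} the score. Fix a realization in $\tilde B^{d}_{II}\cap \tilde C^{d}_{II}\cap D^d$ and let $\pi$ be an optimal alignment of $X$ and $Y$, so that its score equals $|LCS(X;Y)|$. Since $D^d$ holds, $\pi$ aligns $X_{d-(\l/2)}$ to some $Y_i$ with $i\in[i_1,i_2]$. I will build a competing alignment $\bar\pi$ of $X^*$ and $Y$ whose score exceeds that of $\pi$ by at least $\tilde c_H d^\beta$; as $|LCS(X^*;Y)|\ge {\tt score\;of\;}\bar\pi$, this is exactly the event $H^d$.

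First I set up the book-keeping for $\pi$. Let $i+m$ be the spot at which $\pi$ aligns the last block symbol $X_{d+(\l/2)}$ (using the convention introduced for $D^d$). By optimality the induced alignment of $X_1\ldots X_{d-(\l/2)}$ with $Y_1\ldots Y_i$ is itself optimal, contributing $A:=|LCS(X_1\ldots X_{d-(\l/2)};Y_1\ldots Y_i)|$; the block is aligned inside $Y_{i+1}\ldots Y_{i+m}$, contributing some $b\le d^\beta$; and $X_{d+(\l/2)+1}\ldots X_{2d}$ is aligned with $Y_{i+m+1}\ldots Y_{2d}$, contributing some $C$. Thus $|LCS(X;Y)|=A+b+C$. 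Now define $\bar\pi$ by aligning the iid-replaced prefix $X^*_1\ldots X^*_{d+(\l/2)}$ (namely $X_1\ldots X_{d-(\l/2)}$ followed by the replaced block $B^*$) with $Y_1\ldots Y_{i+m}$ in an optimal way, and by leaving the tail alignment of $X^*_{d+(\l/2)+1}\ldots X^*_{2d}=X_{d+(\l/2)+1}\ldots X_{2d}$ with $Y_{i+m+1}\ldots Y_{2d}$ exactly as under $\pi$, so that this tail still contributes $C$.

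The crucial step is to show that the block's footprint is long, namely $m> h:=\tilde k_{II}d^\beta$. Indeed, the block is a single repeated symbol and, by the mostly-symbols property, at most $d^\alpha$ of its entries sit on gaps, so it is matched with at least $d^\beta-d^\alpha$ copies of that symbol inside $Y_{i+1}\ldots Y_{i+m}$; since $\tilde B^{d}_{II}$ forbids any window of length $h$ from containing $d^\beta-d^\alpha$ copies of either symbol, the footprint cannot fit inside a window of length $h$, whence $m>h$. Consequently $Y_1\ldots Y_{i+h}\subseteq Y_1\ldots Y_{i+m}$, and monotonicity of the LCS in the length of the second string, combined with $\tilde C^{d}_{II}$, yields
$$|LCS(X^*_1\ldots X^*_{d+(\l/2)};Y_1\ldots Y_{i+m})|\ \ge\ |LCS(X^*_1\ldots X^*_{d+(\l/2)};Y_1\ldots Y_{i+h})|\ \ge\ A+d^\beta\,\frac{(1+\tilde k_{II})\tilde\gamma_2}{2}.$$
Adding the undisturbed tail contribution $C$ and subtracting $|LCS(X;Y)|=A+b+C$, and using $b\le d^\beta$, gives
$$|LCS(X^*;Y)|-|LCS(X;Y)|\ \ge\ d^\beta\,\frac{(1+\tilde k_{II})\tilde\gamma_2}{2}-b\ \ge\ d^\beta\left(\frac{(1+\tilde k_{II})\tilde\gamma_2}{2}-1\right)\ >\ \tilde c_H d^\beta,$$
the final inequality being \eqref{kII*}. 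This establishes $H^d$.

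The main obstacle is precisely the footprint estimate $m>h$: the whole argument hinges on the optimal alignment genuinely \emph{using} the block, i.e.\ on all but $O(d^\alpha)$ of its symbols being matched. This is the mostly-symbols phenomenon quantified by $G^d$ (established just above in this subsection), and $\tilde B^{d}_{II}$ is the device that converts it into the lower bound on $m$; were the block instead largely matched with gaps, the footprint could be short and realigning a reused window of $Y$ against $B^*$ could cost more than it gains, so the construction would collapse. A secondary, purely technical point is that $\tilde C^{d}_{II}$ is phrased only for the single window length $h=\tilde k_{II}d^\beta$, so it is the monotonicity of the LCS in the $Y$-length that lets us deploy it at the longer, alignment-dependent window $Y_1\ldots Y_{i+m}$.
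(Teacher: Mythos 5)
Your proof is correct and follows essentially the same route as the paper's: use the mostly-symbols property (the event $G^d$) together with $\tilde B^{d}_{II}$ to show the block's footprint in $Y$ has length at least $\tilde k_{II}d^\beta$, then realign $X^*_1\ldots X^*_{d+(\l/2)}$ optimally with the $Y$-prefix ending at that footprint while keeping the tail alignment, and invoke $\tilde C^{d}_{II}$ at $i\in[i_1,i_2]$ (guaranteed by $D^d$) to quantify the gain, losing at most $d^\beta$ from the discarded block matches. Your explicit $A+b+C$ bookkeeping and your remark that monotonicity of the LCS in the $Y$-length is what lets $\tilde C^{d}_{II}$ (stated only for the window length $h$) be applied at the longer footprint are welcome clarifications of steps the paper leaves implicit, as is your acknowledgment that the mostly-symbols property is inherited from the $G^d$ argument rather than from the three events named in the lemma.
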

\begin{proof} Assume that $\tilde B^{d}_{II}$, $\tilde C^{d}_{II}$ and $D^d$ all hold true 
and thus $G^d$ also holds.  Let now $\pi$ be an optimal alignment.  Hence, 
$\pi$ aligns at least $d^\beta-d^{\alpha}$ symbols from the long constant 
block with symbols.  Let $[i,j]$ denote the interval 
on which the long block gets aligned to by $\pi$, meaning 
that $X_{d-(\l/2)+1}$ gets aligned to $i$ by $\pi$ while $X_{d+(\l/2)}$
gets aligned to $j$.   Next, at least 
$d^\beta-d^{\alpha}$ symbols are aligned with symbols from the long block, it follows 
via the event $\tilde B^{d}_{II}$, that $j-1 \ge \tilde k_{II}d^{\beta}$ 
(in order for $[i,j]$ to contain sufficiently many same symbols).  
Now modify the alignment $\pi$ to obtain an alignment $\bar\pi$ 
aligning $X^*$ and $Y$.  The new alignment $\bar\pi$ 
is identical to $\pi$ in the way it aligns 
$X_{d+(\l/2)+1}X_{d+(\l/2)+2}\ldots X_{2d}$ with $Y_{j+1}Y_{j+2}\ldots Y_{2d}$, 
but instead of aligning the long block to $Y_iY_{i+1}\ldots Y_j$, 
it now aligns $X^*_1X_2^*\ldots X^*_{d+(\l/2)}$ with $Y_1Y_2\ldots Y_{j}$. 
Since $D^d$ holds, then $i\in[i_1,i_2]$ and therefore we can apply 
$\tilde C^{d}_{II}$ to $i$.  This yields that the 
gain by aligning  $X^*_1X_2^*\ldots X^*_{d+(\l/2)}$ with $Y_1Y_2\ldots Y_{j}$, 
instead of just aligning $X^*_1X_2^*\ldots X^*_{d-(\l/2)}$ with 
$Y_1Y_2\ldots Y_{i}$, is equal to 
$d^\beta(1+\tilde k_{II})\tilde\gamma_2/2$.  
On the other hand, there is a loss of at most $d^\beta$ symbols from the long 
constant block, so the overall gain is of at least:  
 $$d^\beta\left(\frac{(1+\tilde k_{II})\tilde \gamma_2}{2}-1\right).$$
Therefore the event $H^d$ holds true and this finishes this proof.  
\end{proof}

\paragraph{High probability of $H^d$.}
From Lemma~\ref{Bd*}, 
\begin{equation}
\label{hdc}
\P((H^d)^c)\leq \P((\tilde B^d_{II})^{c})+ \P((\tilde C^{d}_{II})^{c})+\P((D^d)^c),
\end{equation}
and clearly, with the help of Lemma~\ref{lemmaD}, we only need to estimate 
$\P(\tilde C^d_{II})$ and $\P(\tilde B^d_{II})$.  A simple application of 
Hoeffding's inequality, left to the reader, shows that  
$\P((\tilde B^d_{II})^c)$ is exponentially small in 
$d^\beta$.  For $\P(\tilde C^d_{II})$, let 
$$\Delta:=|LCS(X_1^*X_2^*\ldots X^*_{d-(\l/2)+d^\beta};Y_1Y_2\ldots Y_{i+h})|-
|LCS(X_1^*X_2^*\ldots X^*_{d-(\l/2)};Y_1Y_2\ldots Y_{i})|,$$
where $h=\tilde{k}_{II}d^\beta$.  
Again,  $\E\Delta=d_2\gamma_2(d_2,p_2)-d_1\gamma_2(d_1,p_1)$, 
where $d_1,p_1$ are as in the proof of Lemma~\ref{difficult} but where $d_2,p_2$ are different, i.e.,  
\begin{align*}
d_2:=&\frac12 \left(i+h+d-\frac{\l}{2} +d^\beta\right)\!,  \, 
\quad p_2:=\frac{i+h-d+ (\l/2)-d^\beta}{i+h+d-(\l/2)+d^\beta},\\
d_1:=&\frac12 \left(i+d-\frac\l 2\right), \quad \quad \quad \quad \quad p_1:=\frac{i-d+(\l/2)}{i+d-(\l/2)}. 
\end{align*}
Once more, 
\begin{equation}
\label{newnew2}
\left|\E\Delta-d_2\gamma_2(p_2)+d_1\gamma_2(p_1)\right|\leq
2C_\gamma\sqrt{2d\ln 2d},  
\end{equation}
\begin{equation}
\label{banana2}
d_2\gamma_2(p_2)-d_1\gamma_2(p_1)=
\frac{(d^\beta+h)\gamma_2(p_1)}{2}+d_2\frac{\delta\gamma_2}{\delta p}\delta p,
\end{equation}
where, for $d$ large enough, 
\begin{equation}
\label{dunkindonut2} 0< |\delta p:= p_2 - p_1| \leq 2\frac{h+d^\beta}{d}= 
2(1+\tilde{k}_{II})\frac{d^\beta}{d}.  
\end{equation}
Since $i\in[i_1,i_2]$ then $p_1\in[-q,q]$.  
If $p_2$ would also be in $[-q,q]$, then 
the inequality \eqref{gamma'bisbis} would be enough to get our estimates.  
Now, $p_2$ might not be in $[-q,q]$, but by continuity in $d$ and since 
$\delta p\rightarrow 0$, as $d\rightarrow\infty$, we have 
for large enough $d$:  
\begin{equation}
\label{gatech2}
\frac{|\delta\gamma_2|}{\delta p}\le 
\frac{(\gamma_2^c-\tilde{\gamma_2})}{32}
\end{equation}
Combining  \eqref{newnew2}, \eqref{banana2}, \eqref{dunkindonut2}, \eqref{gatech2} and \eqref{gamma'bisbis}  
with the facts that $d_2\leq 2d$ and $\gamma_2(p_1)\ge \gamma_2^c$ lead to:
\begin{equation}
\label{tutu3}
\E\Delta-\frac{(h+d^\beta)\gamma_2^c}{2}\ge
\E\Delta-\frac{(h+d^\beta)\gamma_2(p_1)}{2}\ge
 -\frac{h+d^\beta}{8}(\gamma_2^c-\tilde{\gamma}_2)-
2C_\gamma\sqrt{2d\ln 2d}.
\end{equation}
When $d$ is large enough the term $2C_\gamma\sqrt{2d\ln 2d}$ 
becomes negligible when compared to $h=O(d^{\beta})$, $\beta>1/2$.
Hence, for $d$ large enough,
we find 
\begin{equation}
\label{tutu4}
\E\Delta-\frac{(h+d^\beta)\tilde{\gamma_2}}{2}\geq
 \frac{h+d^\beta}{4}(\gamma_2^c-\tilde{\gamma}_2).
\end{equation}
Applying Hoeffding's inequality to $\Delta$, which depends 
on $d-(\l/2)+d^\beta+i+h<5d$ iid entries, yields:  
$$\P((\tilde{C}^{d}_{II}(i))^c)
\leq 
\exp(-d^{2\beta-1}(\tilde{k}_{II}+1)^2(\gamma_2^c-\tilde{\gamma}_2)^2/1024).
$$
Next, $[i_1,i_2]$ contains at most $2d$ integers, 
and so 
$$\P((\tilde{C}^{d}_{II})^c)
\le 2d\exp(-d^{2\beta-1}(\tilde{k}_{II}+1)^2(\gamma_2^c-\tilde{\gamma}_2)^2/1024).
$$
Finally, $0<2\alpha-1<\alpha<\beta<1$ and, therefore, the orders of magnitude 
of $\P((\tilde B^{d}_{II})^{c})$, $\P((\tilde C^{d}_{II})^{c})$ and $\P((D^d)^c)$ 
together with \eqref{hdc} imply that 
$$\P((H^d)^c)\leq e^{-C_H d^{2\alpha-1}},$$
for all $d\ge 1$, where $C_H>0$ is a constant independent 
of $d$.  This finishes establishing that, with high probability, 
replacing the long constant block by iid symbols increases the LCS.
\hfill~\rule{0.5em}{0.5em}

\

\noindent {\bf Acknowledgments.} It is a pleasure to thank both referees for their numerous 
detailed and thoughtful comments on the 
manuscript leading to the current version.  In particular, one of the referees 
suggested we replaced our differentiability condition at every maxima of the mean LCS function by 
a nicer non-tangential (cone) differentiability condition which can be  
verified up to a given degree of confidence using Monte Carlo simulations.

\

\end{document}